\begin{document}
\theoremstyle{plain}
\newtheorem{thm}{Theorem}[section]
\newtheorem*{thm1}{Theorem 1}
\newtheorem*{thm2}{Theorem 2}
\newtheorem{lemma}[thm]{Lemma}
\newtheorem{lem}[thm]{Lemma}
\newtheorem{cor}[thm]{Corollary}
\newtheorem{propose}[thm]{Proposition}
\newtheorem{ex}[thm]{Example}
\theoremstyle{definition}
\newtheorem{rmk}[thm]{Remark}
\newtheorem{defn}[thm]{Definition}
\newtheorem{notations}[thm]{Notations}
\newtheorem{claim}[thm]{Claim}
\newtheorem{ass}[thm]{Assumption}
\numberwithin{equation}{section}
\newcounter{elno}      
\def\points{\list
{\hss\llap{\upshape{(\roman{elno})}}}{\usecounter{elno}}}
\let\endpoints=\endlist

%
%
%
\newcommand{\mc}{\mathcal} 
\newcommand{\mb}{\mathbb} 
\newcommand{\surj}{\twoheadrightarrow} 
\newcommand{\inj}{\hookrightarrow} \newcommand{\zar}{{\rm zar}} 
\newcommand{\an}{{\rm an}} \newcommand{\red}{{\rm red}} 
\newcommand{\Rank}{{\rm rk}} \newcommand{\codim}{{\rm codim}} 
\newcommand{\rank}{{\rm rank}} \newcommand{\Ker}{{\rm Ker \ }} 
\newcommand{\Pic}{{\rm Pic}} \newcommand{\Div}{{\rm Div}} 
\newcommand{\Hom}{{\rm Hom}} \newcommand{\im}{{\rm im}} 
\newcommand{\Spec}{{\rm Spec \,}} \newcommand{\Sing}{{\rm Sing}} 
\newcommand{\sing}{{\rm sing}} \newcommand{\reg}{{\rm reg}} 
\newcommand{\Char}{{\rm char}} \newcommand{\Tr}{{\rm Tr}} 
\newcommand{\Gal}{{\rm Gal}} \newcommand{\Min}{{\rm Min \ }} 
\newcommand{\Max}{{\rm Max \ }} \newcommand{\Alb}{{\rm Alb}\,} 
\newcommand{\GL}{{\rm GL}\,} 
\newcommand{\ie}{{\it i.e.\/},\ } \newcommand{\niso}{\not\cong} 
\newcommand{\nin}{\not\in} 
\newcommand{\soplus}[1]{\stackrel{#1}{\oplus}} 
\newcommand{\by}[1]{\stackrel{#1}{\rightarrow}} 
\newcommand{\longby}[1]{\stackrel{#1}{\longrightarrow}} 
\newcommand{\vlongby}[1]{\stackrel{#1}{\mbox{\large{$\longrightarrow$}}}} 
\newcommand{\ldownarrow}{\mbox{\Large{\Large{$\downarrow$}}}} 
\newcommand{\lsearrow}{\mbox{\Large{$\searrow$}}} 
\renewcommand{\d}{\stackrel{\mbox{\scriptsize{$\bullet$}}}{}} 
\newcommand{\dlog}{{\rm dlog}\,} 
\newcommand{\longto}{\longrightarrow} 
\newcommand{\vlongto}{\mbox{{\Large{$\longto$}}}} 
\newcommand{\limdir}[1]{{\displaystyle{\mathop{\rm lim}_{\buildrel\longrightarrow\over{#1}}}}\,} 
\newcommand{\liminv}[1]{{\displaystyle{\mathop{\rm lim}_{\buildrel\longleftarrow\over{#1}}}}\,} 
\newcommand{\norm}[1]{\mbox{$\parallel{#1}\parallel$}} 
\newcommand{\boxtensor}{{\Box\kern-9.03pt\raise1.42pt\hbox{$\times$}}} 
\newcommand{\into}{\hookrightarrow} \newcommand{\image}{{\rm image}\,} 
\newcommand{\Lie}{{\rm Lie}\,} 
\newcommand{\CM}{\rm CM}
\newcommand{\sext}{\mbox{${\mathcal E}xt\,$}} 
\newcommand{\shom}{\mbox{${\mathcal H}om\,$}} 
\newcommand{\coker}{{\rm coker}\,} 
\newcommand{\sm}{{\rm sm}} 
\newcommand{\tensor}{\otimes} 
\renewcommand{\iff}{\mbox{ $\Longleftrightarrow$ }} 
\newcommand{\supp}{{\rm supp}\,} 
\newcommand{\ext}[1]{\stackrel{#1}{\wedge}} 
\newcommand{\onto}{\mbox{$\,\>>>\hspace{-.5cm}\to\hspace{.15cm}$}} 
\newcommand{\propsubset} {\mbox{$\textstyle{ 
\subseteq_{\kern-5pt\raise-1pt\hbox{\mbox{\tiny{$/$}}}}}$}} 
\newcommand{\sA}{{\mathcal A}}
\newcommand{\sa}{{\mathcal a}} 
\newcommand{\sB}{{\mathcal B}} \newcommand{\sC}{{\mathcal C}} 
\newcommand{\sD}{{\mathcal D}} \newcommand{\sE}{{\mathcal E}} 
\newcommand{\sF}{{\mathcal F}} \newcommand{\sG}{{\mathcal G}} 
\newcommand{\sH}{{\mathcal H}} \newcommand{\sI}{{\mathcal I}} 
\newcommand{\sJ}{{\mathcal J}} \newcommand{\sK}{{\mathcal K}} 
\newcommand{\sL}{{\mathcal L}} \newcommand{\sM}{{\mathcal M}} 
\newcommand{\sN}{{\mathcal N}} \newcommand{\sO}{{\mathcal O}} 
\newcommand{\sP}{{\mathcal P}} \newcommand{\sQ}{{\mathcal Q}} 
\newcommand{\sR}{{\mathcal R}} \newcommand{\sS}{{\mathcal S}} 
\newcommand{\sT}{{\mathcal T}} \newcommand{\sU}{{\mathcal U}} 
\newcommand{\sV}{{\mathcal V}} \newcommand{\sW}{{\mathcal W}} 
\newcommand{\sX}{{\mathcal X}} \newcommand{\sY}{{\mathcal Y}} 
\newcommand{\sZ}{{\mathcal Z}} \newcommand{\ccL}{\sL} 
\newcommand{\A}{{\mathbb A}} \newcommand{\B}{{\mathbb 
B}} \newcommand{\C}{{\mathbb C}} \newcommand{\D}{{\mathbb D}} 
\newcommand{\E}{{\mathbb E}} \newcommand{\F}{{\mathbb F}} 
\newcommand{\G}{{\mathbb G}} \newcommand{\HH}{{\mathbb H}} 
\newcommand{\I}{{\mathbb I}} \newcommand{\J}{{\mathbb J}} 
\newcommand{\M}{{\mathbb M}} \newcommand{\N}{{\mathbb N}} 
\renewcommand{\P}{{\mathbb P}} \newcommand{\Q}{{\mathbb Q}} 

\newcommand{\R}{{\mathbb R}} \newcommand{\T}{{\mathbb T}} 
\newcommand{\U}{{\mathbb U}} \newcommand{\V}{{\mathbb V}} 
\newcommand{\W}{{\mathbb W}} \newcommand{\X}{{\mathbb X}} 
\newcommand{\Y}{{\mathbb Y}} \newcommand{\Z}{{\mathbb Z}} 

\title{Hilbert-Kunz density functions and $F$-thresholds}
\author{Vijaylaxmi Trivedi and Kei-Ichi Watanabe}
\date{}
\address{School of Mathematics, Tata Institute of Fundamental Research, Homi Bhabha Road, Mumbai-40005, India }
\email{vija@math.tifr.res.in}
\address{Department of Mathematics, College of Humanities and Sciences, Nihon University, Setagaya-Ku, Tokyo 156-0045, Japan}
\email{watanabe@math.chs.nihon-u.ac.jp}
\thanks{}
\subjclass{}
\begin{abstract}We had shown earlier that  
for a standard graded ring $R$ and a graded ideal $I$ in characteristic $p>0$, 
 with $\ell(R/I) <\infty$,  
 there exists a  
compactly supported continuous function $f_{R, I}$ whose Riemann  integral is 
the HK multiplicity $e_{HK}(R, I)$.
We explore further some other invariants, namely the shape of 
the graph of  $f_{R, {\bf m}}$ (where ${\bf m}$ is the graded maximal ideal of $R$) 
 and the maximum support
(denoted as $\alpha(R,I)$) of $f_{R, I}$.

In case $R$ is a  domain of dimension $d\geq 2$, we prove that $(R, {\bf m})$ is a regular ring 
if and only if $f_{R, {\bf m}}$ has a  symmetry 
$f_{R, {\bf m}}(x) = f_{R, {\bf m}}(d-x)$,
for all $x$.

If $R$ is strongly $F$-regular on the punctured spectrum then  
we prove that the $F$-threshold $c^I({\bf m})$ coincides with $\alpha(R,I)$.

As a consequence, if $R$ is a two dimensional domain and $I$ is generated by 
homogeneous elements of the same degree, then 
we have (1) a formula for the $F$-threshold $c^I({\bf m})$ 
 in terms
of the minimum  strong Harder-Narasimahan slope of the syzygy bundle and  (2)
 a well defined notion of the  $F$-threshold $c^I({\bf m})$
 in characteristic $0$.

This characterisation readily computes  $c^{I(n)}({\bf m})$, 
for the set of all irreducible 
plane trinomials $k[x,y,z]/(h)$, where ${\bf m} = (x,y,z)$ and 
$I(n) = (x^n, y^n, z^n)$.

\end{abstract}

\maketitle
\section{Introduction}
Let $(R, I)$ be a standard graded pair, {\em i.e.}, $R$ is a Noetherian 
standard graded ring  over a perfect field $k$ (unless otherwise stated) of 
characteristic $p >0$ 
and $I$ is a graded ideal of finite colength. 
 Let ${\bf m}$ be the graded maximal ideal of $R$.

If $M$ is a  finitely generated graded $R$-module 
then we have 
a compactly supported continuous function $f_{M, I}:[0, \infty)\longto [0, \infty)$
called  the Hilbert-Kunz  density function (see [T2]) for $(M,I)$ (we henceforth 
abbreviate the term `Hilbert-Kunz' to the term `HK').
  We 
realize this function as the limit of a uniformly convergent sequence of 
compactly supported functions 
$\{f_n(M, I):\R\to [0, \infty)\}_{n\in \N}$, where  
$$f_{n}(M, I)(x) = \frac{1}{q^{d-1}}\ell((M/I^{[q]}M)_{\lfloor xq\rfloor}),
~~\mbox{for}~~q=p^n.$$

Moreover $$\int_0^{\infty}f_{M, I}(x)dx = 
e_{HK}(M, I),$$
 where $e_{HK}(M, I)$ denotes 
the famous invariant (introduced by P. Monsky [M1]) called the
 HK multiplicity of $M$ with 
respect to $I$.

Since  the function $f_{M,I}$ is the 
uniformly  convergent limit of  the sequence $\{f_n(M,I)\}_n$,
and is also  an additive and a multiplicative function (the  
HK density functions of the rings explicitly gives the HK density function of 
 their Segre product),  it is a versatile tool 
  to handle  $e_{HK}(M, I)$.

In this paper we study the shape  of the graph of $f_{R,I}$, and the
 maximal support $\alpha(R,I)$  of $f_{R,I}$, where 
 $$\alpha(R, I) =  
\mbox{Sup}~\{x\mid f_{R, I}(x)\neq 0\}.$$

First we prove (in Theorem~\ref{c1} and Theorem~\ref{shk}) that the shape of 
the graph of $f_{R, {\bf m}}$   (and also the invariant  $\alpha(R, I)$) 
determines the 
regularity of $(R, {\bf m})$:
\vspace{5pt}

\noindent{\bf Theorem~A}.~~{\it If $(R, {\bf m})$ is a standard graded 
domain  of dimension $d\geq 2$ then 
\begin{enumerate}
\item $f_{R, {\bf m}}(x) = f_{R, {\bf m}}(d-x)$, for all 
$x$ if and only if the ring $(R, {\bf m})$ is regular. 
\item
In fact $\alpha(R, {\bf m})   
= d$ if and only if 
$(R, {\bf m})$ is a regular ring.
\item If $\dim~R = 2$ then either   
\begin{enumerate} 
\item  $f_{R, {\bf m}}$ is 
symmetric, {\em i.e.}, 
$f_{R, {\bf m}}(1-y) = f_{R, {\bf m}}(1+y)$, for all $y\in (0, 1)$, or
\item  $f_{R, {\bf m}}(1-y) > f_{R, {\bf m}}(1+y)$, for all $y\in (0, 1)$. 
\end{enumerate}
\end{enumerate}}

Next we relate the invariant 
$\alpha(R, I)$  to  $c^I({\bf m})$,
the $F$-threshold of ${\bf m}$ with respect to the ideal $I$.

Recall that 
the $F$-thresholds were introduced and studied in [MTW], in the 
case of regular rings. In a more general setting (when $R$ is not 
regular) it was further studied in [HMTW].
In [MTW] (Question 1.4) the following question  was posed.  

\vspace{5pt}

{\bf Question}.\quad Is it true that for all nonzero ideals $J$ and $I$ with 
$J \subseteq  \mbox{Rad}(I) \subseteq  {\bf m}$, the $F$-threshold $c^I(J)$ is a 
rational number?
\vspace{5pt}
 
For regular rings, one gets a  positive answer from a 
 series of papers  ([KLZ],  [BMS1], [BMS2]) in the following way: the 
$F$-thresholds of $I$  
are also  $F$-jumping numbers of $I$, and  all  $F$-jumping numbers are rational.

 Moreover if  $R$ is a direct summand of a regular $F$-finite domain $S$,
then, by Proposition~4.17 of [AHN], 
$c^{I}(J)$ is a rational number. Here 
$c^I(J)$ was  identified with $c^{IS}(JS)$ and hence is an  $F$-jumping number of 
$JS$.
We recall that in case $(R, {\bf m})$ is regular, either local or standard graded, 
and  $J\subset R$ an ideal, then   
$c^{\bf m}(J)$ is the  first jumping number $\mbox{fpt}(J)$  
(this is called the $F$-pure threshold of $J$).

However, in singular cases, $F$-thresholds may differ from $F$-jumping numbers, 
for example (1) when $R$ is  the coordinate ring of the 
Segre product $\P^m\times \P^n$, where $m\neq n$,  we have 
$\mbox{fpt}({\bf m}) < c^{\bf m}({\bf m})$ (see [CM] and [HWY]), (2) when $R =
k[x,y,z]/(xy-z^2)$, 
we have  $\mbox{fpt}({\bf m}) =1 < c^{\bf m}({\bf m}) =3/2$ (see [TW] and [HMTW]).

In general, to the best of our knowledge, 
it is not known  whether  $c^{\bf m}({\bf m})$  is rational, 
even in  graded cases.

As  a  consequence of identifying $c^I({\bf m})$ with $\alpha(R, I)$ 
we prove that   the $F$-thresholds 
$c^I({\bf m})$ are rational numbers in the cases listed 
in Theorem~B below.

\vspace{5pt}

\noindent{\bf Theorem~B}.~~{\it For a standard graded pair 
$(R, I)$, where $R$ is a two dimensional domain,
 the following statements hold.  
 \begin{enumerate}
\item If $I$ is generated by homogeneous elements of the same degree then 
$c^I({\bf m}) = 1 - a_{min}(V)/d$ and hence is a rational number. 
\item  If $R$ is normal then $c^I({\bf m})$ 
 is a rational number.
\end{enumerate}}

Here  $V$ denotes   a  syzygy bundle   associated to 
the pair $(R, I)$ (as in  Notations~\ref{n3}) and $a_{min}(V)$ denotes the minimum strong 
HN slope (as in Notations~\ref{hnd}) and $d = e_0(R,{\bf m})$ denotes 
the Hilbert-Samuel 
multiplicity of $R$.

In another work [T5], the first author has used 
this  explicit formulation of $c^I({\bf m})$ in terms of the strong HN 
slopes of its syzygy  bundle (along with a  construction of D. Gieseker [G]) to
 give 
an example of a  set of $F$-thresholds of an ideal with
 accumulation points, which answers another question by
Musta\c{t}\u{a}-Takagi-Watanabe (Question~2.11 in [MTW]).

Next we look at the {\em reduction mod} $p$ behaviour of 
these  $F$-thresholds. Recall that
Theorem~3.4 and Proposition~3.8 of [HY] describe the behaviour of 
$c^{\bf m}(I)$ under {\em reduction mod} $p$ (note that in this case 
 $c^{\bf m}(I) = \mbox{fpt}_{\bf m}(I)$) as follows:

\vspace{5pt}

\noindent{\bf Theorem~[HY]}.~~{\it
If $R = A[X_1, \ldots, X_d]$ is a polynomial ring over $A$, where $A$ is a localization of $\Z$ at some nonzero 
integer and $I\subset {\bf m} = (X_1, \ldots, X_d)$ is an ideal then 
\begin{enumerate}\item
$\lim_{p\to \infty} c^{{\bf m}_p}(I_p)$ exists and 
 $\lim_{p\to \infty} c^{{\bf m}_p}(I_p) = 
\mbox{lct}_{\bf m}(I)$, where $\mbox{lct}_{\bf m}(I)$ is the 
log canonical threshold of $I_{\Q}$ at ${\bf m}_{\Q}$.
\item Moreover for 
$p>>0$, we have $\mbox{lct}_{\bf m}(I) = 
\lim_{p\to \infty} c^{{\bf m}_p}(I_p) \geq  c^{{\bf m}_p}(I_p).$
\end{enumerate}}

In dimension two, the formulation of $c^I({\bf m})$ in terms of
the strong HN slopes of a syzygy bundle (as in Theorem~B)
gives  
\begin{enumerate}
\item a well defined notion of 
 $F$-threshold in characteristic $0$ and 
\item 
a characterization of  the  
strong semistability behaviour  of the syzygy bundle $V_s$ 
({\em reduction mod}~$p_s$ of $V$) in terms of the $F$-threshold  
 $c^{I_s}({\bf m}_s)$, of ${\bf m}_s$ with respect to $I_s$ 
(see Definition~\ref{ds}).
\end{enumerate}

This is done using the following result 
(proved in subsection~6.3):
\vspace{5pt}

\noindent{\bf Theorem~C}.~~{\it Let $(R, I)$ be a  standard graded pair 
where $R$ is a  two dimensional domain 
in characteristic~$0$ and where $I$ is generated by homogeneous elements of 
the same degree. If $(R_s, I_s)$ is the  {\em reduction mod} $p_s$ of the pair 
$(R,I)$, (obtained from a spread $(A, R_A, I_A)$, where $p_s= \Char~R_s$), then 
\begin{enumerate}
\item
 $$c_{\infty}^I({\bf m}):= \lim_{p_s\to \infty}
c^{I_s}({\bf m}_s)\quad\mbox{exists and}$$
\item $c^I_{\infty}({\bf m}) = 
\mbox{Sup}\{x\mid f^{\infty}_{R, I}(x)\neq 0\}$, where 
$f^{\infty}_{R, I}(x) = \lim_{p_s\to \infty}f_{R_s, I_s}(x)$.
\item  $c^{I_s}({\bf m}_s) \geq c_{\infty}^I({\bf m})$ if $p_s>>0$.
If, in addition, the bundle $V$ is semistable 
and $V_s$ is the 
{\em reduction mod}~$p_s$ of $V$  then  
$$c^{I_s}({\bf m}_s) = c_{\infty}^I({\bf m})~~
\iff~~ V_{s}~~\mbox{is
strongly semistable}.$$ 
\end{enumerate}}

In particular we have

\vspace{5pt}

\noindent{\bf Corollary~D}.~~{\em Let $(R, {\bf m})$ be a standard graded pair, 
where $R$ is a two dimensional domain in characteristic $0$ and let 
$X=\mbox{Proj}~R$ with  
 $\deg\sO_X(1) > 2\mbox{genus} (X)$,  then 
 for $p_s>>0$, 
$$c^{{\bf m}_s}({\bf m}_s) = 
c_{\infty}^{\bf m}({\bf m}) \iff V_{s}~~\mbox{is 
strongly semistable}.$$}

We  note that, in contrast to the Theorem~[HY], here we have the 
following reverse inequality:
$$\mbox{for}~~ p_s >>0~~\mbox{we have}~~ c_{\infty}^I({\bf m}) =
 \lim_{p_s\to \infty} c^{I_s}({\bf m}_s) \leq c^{I_s}({\bf m}_s).$$

For higher dimensional cases, we relate 
 the two invariants $c^I({\bf m})$ and 
$\alpha(R, I)$  (in the subsection~4.1), which leads us to  
ask the following natural question

\vspace{5pt}
\noindent{\bf Question}.~~
Let $(R, I)$ be a standard graded pair and ${\bf m}$ be the graded maximal ideal 
of $R$. Then, is
 $\alpha(R, I) = c^I({\bf m})$?
\vspace{5pt}

The following theorem (proved in the subsection~4.1) 
lists the cases where we show that the answer is  affirmative

\vspace{5pt}

\noindent{\bf Theorem~E}.~~ Let $(R, I)$ be a standard graded pair
 and ${\bf m}$ be the graded maximal ideal 
of $R$. Then 
{\it \begin{enumerate}
\item  $\alpha(R, I) \leq c^I({\bf m})$, for dimension~$R = d\geq 2$. Moreover
\item  the equality $\alpha(R, I) = c^I({\bf m})$ holds if the pair 
$(R, I)$ satisfies one of the following conditions:
\begin{enumerate}
\item $I$ is generated by a  system of parameters,  
\item $R$ is  strongly $F$-regular on the punctured spectrum 
$\Spec R\setminus \{{\bf m}\}$, or
\item $R$ is a two dimensional domain and $I$ is generated by homogeneous 
elements of the same degree.
\end{enumerate}
Moreover, the  equality holds for the Segre products of all such pairs.
\end{enumerate}}

\vspace{5pt}
At the end of the paper, in Section~7, we give an 
explicit formula for $c^{I(n)}({\bf m})$, where 
$R = k[x,y,z]/(h)$
are irreducible plane trinomials  and  where $I(n) = (x^n, y^n, z^n)$. 

If we denote $c^{I(n)}({\bf m})$ by $c^{{I(n)}_p}({\bf m}_p)$ where  
$p = \mbox{char}~k$, then in fact we find that 
$c^{{I(n)}_p}({\bf m}_p)$ (as $p$ varies) 
is just
a function of the congruence class of $p~\mbox{{\em mod}}~2\lambda_h$,
where $\lambda_h$ is an explicit computable  integer given in terms of the exponents of the 
trinomial $h$ (see Notations~\ref{nt}).
In particular
 we have the following 
\vspace{5pt}

{\bf Example}.~~{\it
If $R=k[x,y,z]/(h)$ is an irreducible trinomial of degree $d\geq 3$
 then 
$$\mbox{for all}~~p\geq d^2~~\mbox{and}~~p\equiv\pm 1\pmod{2\lambda_h}
~~\mbox{we have}~~c^{{I(n)}_p}({\bf m}_p) = c_{\infty}^{I(n)}({\bf m}).$$
If  $(R,{\bf m})$ is  a Segre product of any finitely many 
irreducible trinomials then 
\begin{enumerate}
\item  there are 
infinitely many primes $p>0$, for which 
$c^{{I(n)}_p}({\bf m}_p) = c_{\infty}^{I(n)}({\bf m})$.
\item  Moreover, if one of the trinomials, occuring  in the product,   
is a symmetric curve ({\em i.e.}, $h = x^{d-a}y^a+y^{d-a}z^a+z^{d-a}x^a$) 
of degree $d > 5$ then 
there are also
infinitely many primes $p>0$, for which 
$c^{{\bf m}_p}({\bf m}_p) > c_{\infty}^{\bf m}({\bf m})$.
\end{enumerate}}
We recall the known computations made for some explicit polynomials to 
demonstrate
the complexity of $c^{I}(J)$.

 When $R= k[x,y]$ and $f= x^2+y^3$, or when  $R= k[x,y,z]$ and $f$ is a 
homogeneous polynomial of degree~$3$ with 
isolated singularity at $(x,y,z)$ then $c^{\bf m}(f)$ was computed and such 
phenomena were exhibited in 
 Examples~4.3 and 4.6 of [MTW].
In Corollary~3.9, Hara and Monsky (see [H])  independently described (using sygygy gaps)
the possible values of $c^{(x,y)}(f)$,
whenever $f\in k[x,y]$
is homogeneous of degree 5 with an isolated singularity at
the origin, when $p\neq 5$. Theorem~4.2 of [Vr] computes $c^{\bf m}({\bf m})$, 
for  diagonal hypersurfaces.

\vspace{5pt}

The organization of this paper is as follows.

In Section~3, we compute the HK density function for $(R, I)$, where 
$I$ is generated by a system of parameters. 
This turns out to be  a volume  function, depending only on the 
degrees of the generators of $I$.
 Here we use the uniform convergence property of the sequence $\{f_n(R, I)\}_n$ 
and the fact that $f_{R, I}$ is a continuous function.

In Section~4, we relate $\alpha(R, I)$ with the $F$-threshold 
$c^I({\bf m})$ and give the examples of the cases when the equality
$\alpha(R, I) = c^I({\bf m})$ does hold.
Here we also
characterize the regularity property of the ring $R$  in terms of the shape of the 
graph of $f_{R, {\bf m}}$ and also in terms of the number  $\alpha(R, {\bf m})$.

From Sections~5 onwards we restrict to  standard graded pairs in dimension two.
In Section~5, we list some (known)  results  
about vector bundles over nonsingular 
projective curves, which we use later in this paper.

In Section~6, we give a  notion (analogous to $f_{R, I}$) of the HK density function
$f_{V, \sO_X(1)}$, for a 
pair $(V, \sO_X(1))$, where $V$ is a vector-bundle on a nonsingular 
curve $X$ and $\sO_X(1)$ is a very ample line-bundle on $X$.
Then we relate $f_{R,I}$ with the HK density functions of the syzygy 
vector bundles which are expressed in terms ot their strong HN datum.

In Section~7, we give  the computations of $F$-thresholds for 
 plane trinomials. 

The results stated in Theorem~\ref{vb1} can be generalized by removing 
the hypothesis that the ideal $I$ be generated by homogeneous  elements of 
same degree elements. However
the  arguments are technical and will appear in a subsequent paper.

The authors thank the referee for careful reading of the manuscript and 
providing various suggestions which greatly improved the exposition of the paper.

\section{preliminaries}
Let $(R, I)$ be a standard graded pair over a perfect field of characteristic
$p>0$. 
Let $M$ be a finitely generated 
graded $R$-module.
We recall the following known properties of $f_{M,I}$ from
[T2].

\begin{enumerate}
\item {\underline {Additive property}}: Like  HK multiplicity, the HK density function 
too have the additive property, which  reduces the theory of  
$f_{M, I}$ to the theory of
$f_{R,I}$, where $R$ is a normal domain:
Let $\Lambda$ be the set of minimal 
prime ideals $P$ of $R$ such that $\dim R/P = \dim R$.
Then 
$$f_{M,I} = \sum_{P\in \Lambda}f_{R/P, I}\lambda(M_P).$$
As a consequence, we have
\begin{enumerate}
\item $f_{M, I} = 0$, if $\dim~M < \dim~R$.
\item If $R$ is an integral domain then $f_{R, I} = f_{S, IS}$, where 
$S$ is the  normalization of $R$, regarded as a graded $R$-module. 
\item $f_{M, I} = f_{M(n), I}$, for every $n\in \Z$.
\end{enumerate}

\item {\underline {Multiplicative property}}: The multiplicative property expresses 
the HK density function of the Segre product of rings in terms 
of the HK density function of the individual rings:
If  $(R, I)$ and $(S, J)$ are two pairs and
$F_{R}(x) = e(R)x^{d-1}{(d-1)!}$, where
$e(R)$ denotes the Hilbert-Samuel multiplicity of $R$ with 
respect to its irrelevant maximal ideal ${\bf m}$ and $d = \dim~R$
 then the Segre product $(R\#S, I\#J)$ 
satisfies
$$ F_{R\#S}(x)-f_{R\#S, I\# J}(x)  =  
\left[F_{R}(x) - f_{R, I}(x)\right]
\left[F_{S}(x) - f_{S,J}(x)\right].$$

\item Let $I \subseteq I'$ such that $I'$ is homogeneous 
then 
$$e_{HK}(R, I) = e_{HK}(R, I') \iff f_{R, I}(x) = 
f_{R, I'}(x),~~\mbox{for all}~~x.$$
In particular, if $R$ is equidimensional then   
$$f_{R, I} = f_{R, I'} \iff I'\subseteq I^*,$$
where $I^*$ denotes the tight closure of $I$ in $R$.
\item If $n_0\in \N$ such that ${\bf m}^{n_0}\subseteq I$ and the ideal $I$ is 
generated by $\mu$ generators then 
$\mbox{the support of}~f_{R, I}\subseteq [0, n_0\mu]$.

\end{enumerate}

\section{HK density functions for parameter ideals}
\subsection{HK density functions for parameter ideals}
Here we give an explicit formula for the HK density function $f_{R, I}$, when
$I$ is generated by a system of parameters. As expected, we find that
$f_{R, I}$ solely depends on the degrees of the generators of $I$.

\begin{defn}\label{d4}Given nonnegative integers $n_1, \ldots, n_m$, consider
a $m$-parallelotope $P = [0, n_1]\times \cdots\times [0, n_m]$. We define 
a volume function 
$$V_{m-1}(n_1, \ldots, n_m):[0, \infty)
\longto [0, \infty)~~~\mbox{given by}~~~ 
x\to \mbox{Vol}_{m-1}(P\cap H_x),$$
where $H_x = \{(y_1, \ldots, y_m)\in \R^m \mid \sum_iy_i 
= x\}$ is a $m-1$-dimensional hyperplane in $\R^m$ and 
$\mbox{Vol}_{m-1}$ is the  $(m-1)$-dimensional Euclidean volume. 

\end{defn}
\begin{lemma}\label{l1} Let $(R,I)$ be a standard graded pair, where $I$ is 
generated by homogeneous system of parameters $f_1, \ldots, f_d$ of 
degree $n_1,n_2, \ldots, n_d$ respectively. Then 
$$f_{R, I}(x) = e(R)V_{d-1}(n_1, \ldots, n_d)(x),$$
where the function $V_{d-1}(n_1, \ldots, n_d)$ is given as in Definition~\ref{d4}
and $e(R)$ is the Hilbert-Samuel multiplicity of $R$ with respect to the ideal 
${\bf m}$.
\end{lemma}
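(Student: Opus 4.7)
The plan is to reduce the computation of $f_{R,I}(x)$ to a Hilbert-series calculation on the weighted polynomial subring generated by the parameter system, and then to identify the answer with the classical formula for the hyperplane slice of a box.

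First, by the additivity property of the HK density function (Section~2, item~(1)) and the additivity of Hilbert--Samuel multiplicity, I may assume $R$ is a graded domain. The inclusion $A := k[f_1,\ldots,f_d] \hookrightarrow R$ then presents $A$ as a weighted polynomial ring (with $\deg f_i = n_i$; the $f_i$ are algebraically independent, being a homogeneous system of parameters in a graded domain) and $R$ as a finitely generated torsion-free $A$-module of some generic rank $r$. Picking a homogeneous $\mathrm{Frac}(A)$-basis $e_1,\ldots,e_r$ of $R$ with $\deg e_j = b_j$ yields a short exact sequence of graded $A$-modules
\[
0 \longrightarrow E := \bigoplus_{j=1}^{r} A(-b_j) \longrightarrow R \longrightarrow T \longrightarrow 0,
\]
with $\dim_A T \le d-1$. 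Comparing Hilbert series via $H_R(t) = g_R(t)/\prod_i(1-t^{n_i})$ with $g_R(1) = r$, and $e(R) = \lim_{t \to 1}(1-t)^d H_R(t)$, one finds $r = e(R)\, n_1 \cdots n_d$.

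Second, I would apply $-\otimes_A A/A^{[q]}$, where $A^{[q]} := (f_1^q,\ldots,f_d^q)A$, noting $R/I^{[q]}R = R\otimes_A A/A^{[q]}$. Since $E$ is $A$-free, the long Tor sequence collapses to
\[
0 \to \mathrm{Tor}_1^A(A/A^{[q]},R) \to \mathrm{Tor}_1^A(A/A^{[q]},T) \to E/A^{[q]}E \to R/I^{[q]}R \to T/A^{[q]}T \to 0.
\]
Each of the three ``error'' terms $T/A^{[q]}T$, $\mathrm{Tor}_1^A(A/A^{[q]},T)$, and $\mathrm{Tor}_1^A(A/A^{[q]},R)$ has graded dimension $O(q^{d-2})$ in degree $\lfloor xq\rfloor$: the first because $\dim_A T \le d-1$ forces $\dim_k T_m = O(m^{d-2})$; the second because it is a subquotient of $T^{\oplus d}$ with shifts $q n_i$; and the third because it injects into the second. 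Dividing by $q^{d-1}$ kills these contributions and yields
\[
f_{R,I}(x) \;=\; r \cdot \lim_{q\to\infty} \frac{\dim_k (A/A^{[q]})_{\lfloor xq\rfloor}}{q^{d-1}}.
\]

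Third, the remaining limit is a direct calculation: $\dim_k (A/A^{[q]})_m$ is the coefficient of $t^m$ in $\prod_i (1-t^{qn_i})/(1-t^{n_i})$, so inclusion-exclusion gives
\[
\dim_k (A/A^{[q]})_m \;=\; \sum_{S \subseteq \{1,\ldots,d\}} (-1)^{|S|}\, P\!\bigl(m - q\sigma_S\bigr), \qquad \sigma_S := \sum_{i \in S} n_i,
\]
where $P(n) := \#\{a \in \N^d : \sum a_i n_i = n\}$ is the denumerant, with classical asymptotic $P(n) = n^{d-1}/((d-1)!\, n_1\cdots n_d) + O(n^{d-2})$. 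Summing and inserting $r = e(R)\prod n_i$ produces
\[
f_{R,I}(x) \;=\; \frac{e(R)}{(d-1)!} \sum_{S\subseteq\{1,\ldots,d\}} (-1)^{|S|} (x - \sigma_S)_+^{\,d-1}.
\]
The bracketed sum is the classical ``box-slice'' formula for $V_{d-1}(n_1,\ldots,n_d)(x)$, provable by induction on $d$ via Cavalieri's principle (integrate out the last coordinate) together with inclusion-exclusion on the constraints $0 \le y_i \le n_i$. The main technical obstacle is the uniform $O(q^{d-2})$ estimate on the Tor corrections at the specific degree $\lfloor xq\rfloor$; it rests essentially on $T$ having Krull dimension strictly less than $d$, which is precisely what forces the correction to be of strictly lower order in $q$ than the main term.
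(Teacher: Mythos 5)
Your route is genuinely different from the paper's: rather than inducting on $d$ by killing $f_1$, bounding $f_n(R,I)$ by a Riemann-sum average of $f_n(R/f_1R,\,J)$, and finishing with the ``both sides integrate to $e_{HK}$'' trick, you pass to the Noether normalization $A=k[f_1,\ldots,f_d]\hookrightarrow R$, use the graded $\mathrm{Tor}$ long exact sequence to show the torsion part contributes only $O(q^{d-2})$ in degree $\lfloor xq\rfloor$, and compute the leading term directly from the denumerant. This is a clean idea that avoids induction and avoids the final integral-comparison step. The multiplicity count $r=e(R)\,n_1\cdots n_d$, the Koszul computation of $\mathrm{Tor}_1^A(A/A^{[q]},-)$, the dimension bound on the error terms, and the injection $\mathrm{Tor}_1^A(A/A^{[q]},R)\hookrightarrow \mathrm{Tor}_1^A(A/A^{[q]},T)$ are all correct.

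There is, however, a genuine gap in the third step. You pass from
$f_{R,I}(x)=\lim_q q^{-(d-1)}\dim_k(E/A^{[q]}E)_{\lfloor xq\rfloor}$
to
$f_{R,I}(x)=r\cdot\lim_q q^{-(d-1)}\dim_k(A/A^{[q]})_{\lfloor xq\rfloor}$,
discarding the twists $b_j$, and then invoke the pointwise asymptotic
$P(n)=n^{d-1}/((d-1)!\,n_1\cdots n_d)+O(n^{d-2})$. Both of these fail as soon as $g:=\gcd(n_1,\ldots,n_d)>1$, which certainly happens for standard graded $R$ (e.g.\ $R=k[x,y]$, $I=(x^2,y^2)$). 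In that case $P(n)=0$ unless $g\mid n$, and for $g\mid n$ one has $P(n)\sim g\cdot n^{d-1}/((d-1)!\,n_1\cdots n_d)$; consequently $q^{-(d-1)}\dim_k(A/A^{[q]})_{\lfloor xq\rfloor}$ either has no limit (for generic $x$ the parity of $\lfloor xq\rfloor$ jumps around) or converges to the wrong number (for $R=k[x,y]$, $I=(x^2,y^2)$, $x=2$ it gives $r\cdot 1=4$ instead of $f_{R,I}(2)=2$). What is actually true, and what you should keep, is
$f_{R,I}(x)=\lim_q q^{-(d-1)}\sum_j\dim_k(A/A^{[q]})_{\lfloor xq\rfloor-b_j}$.
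To evaluate this you need the shifts $b_j$ to be equidistributed mod $g$, which requires an extra argument (for instance: since $H_T$ has pole order $<d$ at every $g$-th root of unity $\zeta\ne 1$ while $H_R$ has none, $h_R(\zeta)=\sum_j\zeta^{b_j}=0$, forcing equidistribution). Once that is in place the aggregate over $j$ and $S$ does give the stated box-slice formula, but the proof as written skips exactly this point, and the displayed ``$f_{R,I}(x)=r\cdot\lim\ldots$'' equation is false in general.
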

\begin{proof}By the additive property of the HK density function and 
the Hilbert-Samuel multiplicity, we can assume that 
$R$ is a normal domain and hence $f_1$ is a non zero-divisor on $R$.
 For $d=2$ the lemma is  easy to check as $\{f_1, f_2\}$ form a 
regular sequence. Henceforth we assume $d\geq 3$.
We prove the lemma by induction on $d$.  
For the ring $S= R/f_1R$ and the ideal $J = I/f_1R$
$$f_{S, J}(x) = e(S, {\bf m}S)V_{d-2}(n_2, \ldots, n_d)(x),~~\mbox{for all}~~x\in 
\R,$$ 
where,   $e(S,{\bf m}S) = n_1 e(R)$. 
For every $k\geq 1$ and $q=p^n$, there exists  the canonical degree $0$ surjective  map of 
graded $R$-modules, where $J_1 = f_2R+\cdots +f_dR$.
\begin{equation}\label{e8}
\frac{S}{J^{[q]}}(-kn_1)  \longto 
\frac{f_1^kR}{f_1^{k+1}R+
f_1^kR\cap J_1^{[q]}} = \frac{f_1^kR+J_1^{[q]}}{f_1^{k+1}R+J_1^{[q]}}.
\end{equation}
Hence, for any $x\geq 0$, 
we have the surjective map
$$\oplus_{k=0}^{q-1}\left(\frac{S}{J^{[q]}}\right)_{-kn_1+\lfloor xq\rfloor}\longto 
\left(\frac{R}{I^{[q]}}\right)_{\lfloor xq\rfloor},$$
which gives
\begin{equation}\label{e7}
f_n(R,I)(x) \leq \frac{1}{q}\sum_{k=0}^{q-1}f_n(S, J) 
\left(\frac{\lfloor xq\rfloor-kn_1}{q}\right).\end{equation}
For the sake of brevity,  throughout the rest of the proof,  
we denote $f_n(S, J)$ by $g_n$.
 For every $q =p^n$, $k\geq 1$ and for every 
$kn_1/q < \lambda \leq (kn_1+n_1)/q$ (applying Lemma~2.8 of [T2] 
to an injective  graded map  $S_{\lfloor xq-\lambda q\rfloor} \longto 
S_{\lfloor xq\rfloor-kn_1}$) we have  
$$g_n\left(\frac{\lfloor xq\rfloor-kn_1}{{q}}\right) =
g_n\left(\frac{\lfloor xq\rfloor}{q} -\lambda\right) + O({1}/{q}).$$

Hence 
$$ \left(\frac{1}{q}\right)~g_n\left(\frac{\lfloor xq\rfloor -kn_1}{q}\right) = 
 \frac{1}{n_1}
\int_{kn_1/q}^{(k+1)n_1/q}g_n\left(\frac{\lfloor 
xq\rfloor}{q}-\lambda\right)d\lambda + O(1/q^2)
$$

\vspace{5pt}
\noindent{\underline{Case}~(1)}.\quad If $n_1 <x$. Then for $q>>0$, we have 
$(q-1)n_1\leq \lfloor xq\rfloor$.

Therefore
$$\mbox{R.H.S. of}~~(\ref{e7})  
= ({1}/{q})\sum_{k=0}^{q-1}g_n\left(\frac{\lfloor xq\rfloor -kn_1}{q}\right)
= ({1}/{n_1})\int_{{\lfloor xq\rfloor}/{q}-
n_1}^{{\lfloor xq\rfloor}/{q}}
g_n\left(\lambda\right)d\lambda
+O\left(1/q\right).$$

Now taking limit for (\ref{e7})  as $q\to \infty$, and by  
induction on $d$, we get
$$f_{R,I}(x) \leq ({1}/{n_1})\int_{x-n_1}^xf_{S, J}(\lambda)d\lambda
= \frac{1}{n_1} e(S, {\bf m}S)\int_{x-n_1}^x 
V_{d-2}(n_2, \ldots, n_d)(\lambda)d\lambda.$$
Since $n_1 <x$, this gives
$f_{R,I}(x) \leq e(R) V_{d-1}(n_1, n_2,\ldots, n_d)(x).$

\noindent{\underline{Case}~(2)}\quad If $n_1\geq x$ then 
$\lfloor xq\rfloor = n_1{\tilde m}+r$, where $0\leq r<n_1$ and 
${\tilde m} <q-1$.
$$\mbox{R.H.S. of}~~(\ref{e7})
= ({1}/{q})\left[
g_n(\frac{\lfloor xq\rfloor -{\tilde m}n_1}{q})+
g_n(\frac{\lfloor xq\rfloor -({\tilde m}-1)n_1}{q})+
\cdots g_n(\frac{\lfloor xq\rfloor}{q})\right]$$
$$= ({1}/{n_1})\int_0^{{\tilde m}n_1/q}g_n(\lambda)d\lambda +
{\tilde m}O({1}/{q^2}).$$
Now taking limit for (\ref{e7})  as $q\to \infty$,  we get
$$f_{R,I}(x) \leq ({1}/{n_1})\int_{0}^x f_{S, J}(\lambda)d\lambda 
= e(R)V_{d-1}(n_1, \ldots, n_d)(x).$$
Hence $f_{R, I}(x)\leq e(R)V_{d-1}(n_1, \ldots, n_d)(x)$ for all $x\in \R$. 

But
$e(R)V_{d-1}(n_1, \ldots, n_d)(x)-f_{R, I}(x)$ is a nonnegative continuous function with 
integral $=0$. 
Therefore  $f_{R, I}(x) = e(R)V_{d-1}(n_1, \ldots, n_d)(x)$, for all $x$. 
\end{proof}

\begin{cor}\label{p1}Let $(R, I)$ be a pair as above. If
$I$ is a parameter ideal of $R$ generated by elements of degrees, say $n_1, \ldots, 
n_d$
then $f_{R,I}$ is a symmetric function around $n_1+\cdots +n_d$, {\it i.e.},
$$f_{R, I}(x) = f_{R, I}(n_1+\cdots + n_d-x),~~~\mbox{for all}~~x\geq 0.$$
\end{cor}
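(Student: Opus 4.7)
The plan is to reduce the symmetry statement for $f_{R,I}$ to a symmetry statement for the purely geometric volume function $V_{d-1}(n_1,\ldots,n_d)$, and then verify the latter by exhibiting an explicit volume-preserving involution of the parallelotope $P$.

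First, by Lemma~\ref{l1}, we have $f_{R,I}(x) = e(R)\,V_{d-1}(n_1,\ldots,n_d)(x)$ for all $x\ge 0$. Hence it suffices to prove the identity
\[
V_{d-1}(n_1,\ldots,n_d)(x) \;=\; V_{d-1}(n_1,\ldots,n_d)\bigl(n_1+\cdots+n_d - x\bigr)
\]
for every $x\ge 0$.

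Next I would produce the symmetry geometrically. Let $c=(n_1,\ldots,n_d)\in \R^d$ and consider the affine involution $\sigma\colon \R^d\to \R^d$, $\sigma(y)=c-y$. This map sends the parallelotope $P=[0,n_1]\times\cdots\times[0,n_d]$ to itself (it is the reflection through the center $c/2$), and for any $y$ with $\sum_i y_i = x$ we have $\sum_i \sigma(y)_i = (n_1+\cdots+n_d) - x$. Thus $\sigma$ restricts to a bijection
\[
\sigma\colon P\cap H_x \;\longrightarrow\; P\cap H_{n_1+\cdots+n_d - x}.
\]
Since $\sigma$ is a Euclidean isometry (a reflection), it preserves $(d-1)$-dimensional volume on the hyperplane slices, giving the desired equality of $V_{d-1}(n_1,\ldots,n_d)$ at $x$ and at $n_1+\cdots+n_d-x$.

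There is no real obstacle here; the only thing to be careful about is the boundary behaviour (if $x<0$ or $x>n_1+\cdots+n_d$ both sides are zero, so the identity holds trivially there), and the fact that $\sigma$ is an isometry of $\R^d$ that maps $H_x$ affinely onto $H_{n_1+\cdots+n_d-x}$, so the induced map between the slices is an isometry of $(d-1)$-dimensional Euclidean spaces and hence volume-preserving. Combining with Lemma~\ref{l1} yields the stated symmetry of $f_{R,I}$.
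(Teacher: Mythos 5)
Your argument is correct and is precisely the (unstated) one the paper intends: the corollary is left as an immediate consequence of Lemma~\ref{l1}, and the symmetry of $V_{d-1}(n_1,\ldots,n_d)$ about $(n_1+\cdots+n_d)/2$ follows from exactly the central reflection $\sigma(y)=c-y$ of the parallelotope that you exhibit. Your note on the trivial boundary cases and on $\sigma$ being a volume-preserving isometry between the hyperplane slices fills in the only details the paper leaves implicit.
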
 

\section{The function $f_{R,I}$ versus the $F$-threshold
$c^I({\bf m})$  and 
the regularity} 

\subsection{Support of the HKd function}

In this subsection we compare the maximum support of the HK density function
$f_{R, I}$ and the $F$-threshold $c^I({\bf m})$ of ${\bf m}$ with respect to $I$.
We discuss  the cases, where we can show that both the invariants coincide.
  
\begin{defn}\label{d1}For a standard graded pair $(R,I)$ 
 and a finitely generated graded $R$-module $M$, 
let
$$\alpha(M, I) = \mbox{Sup}~\{x\mid f_{M,I}(x)>0\}.$$ 
\end{defn}

\begin{rmk}\label{r1}If $(R, I)$ is  a standard 
graded pair of dimension $d\geq 2$ and $I$ is 
generated by homogeneous elements of degrees  $d_1 \leq d_2 < \cdots $ then 
$\alpha(R, I) >d_1$:
By definition, the function $f_{R, I}(x) = 
e(R)x^{d-1}/((d-1)!)$, 
for $0\leq x \leq d_1$.
In particular, $f_{R, I}\mid_{[0,d_1]}$ is a strictly monotonic increasing function 
and  $\alpha(R, I)$ is a positive real number with $\alpha(R, I) > d_1$.
 \end{rmk}

We recall the following notion of $F$-threshold, as defined  in [HMTW].
 
\begin{defn}\label{d2}Let $I$ and $J$ be two ideals such that 
$J \subseteq \sqrt{I}$.
Then  the
 $F$-threshold of $J$ with respect to $I$ is 
$$c^{I}(J)  = \lim_{q\to \infty}
\frac{\mbox{min}~\{r\mid {J}^{r+1}\subseteq {I}^{[q]}\}}{q}\quad\mbox{if it exists}.$$
\end{defn}
The existence of the above limit in full generality was proved in 
  [DsNbP].

\begin{propose}\label{p2} Let $(R, I)$ be a standard graded pair of dimension 
$\geq 2$. 
 Then 
$\alpha(R, I)\leq c^I({\bf m})$. 
\end{propose}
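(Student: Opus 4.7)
The plan is to show directly that $f_{R, I}(x) = 0$ whenever $x > c^I({\bf m})$; since $f_{R,I}$ is nonnegative, this immediately gives $\alpha(R, I) = \sup\{x \mid f_{R,I}(x) > 0\} \leq c^I({\bf m})$.

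First I recall the definition
$$f_n(R, I)(x) = \frac{1}{q^{d-1}}\,\ell\bigl((R/I^{[q]})_{\lfloor xq \rfloor}\bigr), \qquad q = p^n,$$
together with the fact (from the paper's setup) that $f_n(R, I) \to f_{R, I}$ uniformly on $[0, \infty)$. So in particular $f_n(R, I)(x) \to f_{R, I}(x)$ for each fixed $x$, and it is enough to show that for every $x > c^I({\bf m})$ we have $f_n(R,I)(x) = 0$ for all $n \gg 0$.

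Next I unwind the $F$-threshold. Setting $\nu(q) = \min\{r \mid {\bf m}^{r+1} \subseteq I^{[q]}\}$, Definition~\ref{d2} says $\nu(q)/q \to c^I({\bf m})$. Fix $x > c^I({\bf m})$ and choose $\epsilon > 0$ with $c^I({\bf m}) + \epsilon < x$. Then for $q \gg 0$,
$$\nu(q) < q\bigl(c^I({\bf m}) + \epsilon\bigr) < xq - 1 \leq \lfloor xq \rfloor,$$
so $\lfloor xq \rfloor \geq \nu(q) + 1$. Because $R$ is standard graded, ${\bf m}$ is generated in degree one, hence ${\bf m}^r = \bigoplus_{i \geq r} R_i$; in particular $R_r = ({\bf m}^r)_r$ for every $r \geq 0$. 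Combined with ${\bf m}^{\nu(q)+1} \subseteq I^{[q]}$, this forces $(R/I^{[q]})_r = 0$ for every $r \geq \nu(q)+1$, and in particular at $r = \lfloor xq \rfloor$. Therefore $f_n(R, I)(x) = 0$ for $q \gg 0$, so $f_{R, I}(x) = 0$, completing the argument.

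There is no serious obstacle here: the proof is really just matching the two definitions against each other. The one place to take care is using the standard graded hypothesis to pass from the ideal-theoretic vanishing ${\bf m}^{\nu(q)+1} \subseteq I^{[q]}$ to the graded-piece vanishing $(R/I^{[q]})_{\lfloor xq \rfloor} = 0$, and the minor bookkeeping with $\lfloor xq \rfloor$ versus $xq$, both of which are routine. The hypothesis $d \geq 2$ does not play a visible role in this inequality itself; it is included presumably because $\alpha(R,I)$ and $c^I({\bf m})$ are of primary interest in that range (see Remark~\ref{r1}).
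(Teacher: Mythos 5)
Your proof is correct and follows essentially the same route as the paper: translate the $F$-threshold inequality ${\bf m}^{r+1}\subseteq I^{[q]}$ into vanishing of graded pieces using the standard graded hypothesis, conclude $f_n(R,I)(x)=0$ for $q\gg 0$, and pass to the limit. The only (cosmetic) difference is that you work with $x>c^I({\bf m})$ and so never need the paper's final appeal to continuity of $f_{R,I}$, which the paper uses only to upgrade vanishing on $(c,\infty)$ to vanishing on $[c,\infty)$ — a step that is not needed for the stated inequality.
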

\begin{proof}
Let $c^I({\bf m}) = c$. Then,
 given $\epsilon >0$, there is a 
$q(\epsilon)$ such that for all $q \geq q(\epsilon)$, we have 
${\bf m}^{(c+\epsilon)q} \subseteq I^{[q]}$. Since $R$ is a standard graded ring 
this implies $(R/I^{[q]})_m = 0$, for $m \geq (c+\epsilon)q$.

Therefore, for all $x \geq c+\epsilon$ and for $q\geq q(\epsilon)$, 
$$ \ell(R/{I}^{[q]})_{\lfloor xq\rfloor} = 0\implies 
f_n(x) = \frac{1}{q^{d-1}}
\ell(R/{I}^{[q]})_\lfloor xq\rfloor  = 0,$$ 
 Hence, for every $\epsilon >0$, $f(x) = 
\lim_{n\to \infty} f_n(x) = 0$, for all $x\geq (c+\epsilon)$.

Since $f:[0, \infty)\longto [0, \infty)$ is a continuous function,  we deduce that 
$f(x) = 0$ for all $x\geq c$.
\end{proof}

Next we discuss the cases where $\alpha(R,I) = c^I({\bf m})$.

\begin{lemma}\label{r31} For a standard graded pair $(R, I)$ of dimension $1$, 
where $R$ is a reduced ring, we have $\alpha(R,I) = c^I({\bf m})$.\end{lemma}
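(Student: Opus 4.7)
The plan is to prove both inequalities separately. The decisive feature of dimension one is that $q^{d-1}=1$, so $f_n(R,I)(x)=\ell((R/I^{[q]})_{\lfloor xq\rfloor})$ is a nonnegative integer; this integer-valuedness is the one accident that drives everything.

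For $\alpha(R,I)\le c^I({\bf m})$: The proof of Proposition~\ref{p2} goes through unchanged in dimension one once we bypass the one step that invokes continuity. Writing $c:=c^I({\bf m})$, for every $\varepsilon>0$ we have ${\bf m}^{\lfloor(c+\varepsilon)q\rfloor+1}\subseteq I^{[q]}$ for $q\gg 0$; since ${\bf m}^{N}=\bigoplus_{m\ge N}R_m$ in a standard graded ring, this gives $(R/I^{[q]})_m=0$ whenever $m>(c+\varepsilon)q$, hence $f_n(R,I)(x)=0$ for $x>c+\varepsilon$ and $q\gg 0$. Passing to the limit and letting $\varepsilon\to 0$ gives $\alpha(R,I)\le c$.

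For the reverse inequality $\alpha(R,I)\ge c^I({\bf m})$: set $r_q:=\min\{r:{\bf m}^{r+1}\subseteq I^{[q]}\}$, so by definition $r_q/q\to c^I({\bf m})$, and by the same identity ${\bf m}^{r+1}=\bigoplus_{m\ge r+1}R_m$ we may reinterpret $r_q=\max\{m:(R/I^{[q]})_m\neq 0\}$. The key structural input is that $M_q:=R/I^{[q]}$ is a cyclic graded $R$-module generated by $1\in (M_q)_0$; because $R$ is standard graded, $R_1\cdot R_m=R_{m+1}$, so the maps $R_1\otimes_k (M_q)_m\twoheadrightarrow (M_q)_{m+1}$ are surjective. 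Thus once $(M_q)_m=0$ all higher graded pieces vanish, and the nonzero degrees of $M_q$ form an initial segment $\{0,1,\ldots,r_q\}$ of $\Z_{\geq 0}$. Consequently, for any $0\le x<c^I({\bf m})$ and every sufficiently large $q$ one has $\lfloor xq\rfloor\le r_q$, so $(M_q)_{\lfloor xq\rfloor}\neq 0$ and $f_n(R,I)(x)=\ell((M_q)_{\lfloor xq\rfloor})\ge 1$. Passing to the limit in $n$ gives $f_{R,I}(x)\ge 1>0$, whence $\alpha(R,I)>x$; letting $x\uparrow c^I({\bf m})$ we obtain $\alpha(R,I)\ge c^I({\bf m})$.

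The main obstacle: there is no truly hard step, but the argument really does hinge on $d=1$. In higher dimension the same initial-segment argument only produces the lower bound $f_n(R,I)(x)\ge q^{-(d-1)}$, which degenerates to $0$ in the limit, so the positivity of the length is lost. This is precisely why the equality $\alpha(R,I)=c^I({\bf m})$ is substantially harder in higher dimension and, as recorded in Theorem~E, is known only under extra hypotheses such as $I$ being generated by a system of parameters, strong $F$-regularity on the punctured spectrum, or the two-dimensional syzygy-bundle setup.
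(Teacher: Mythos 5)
Your proof is correct, and it is essentially the same argument as the paper's, read in both directions. The paper appeals to Theorem~2.9 of [T2] (in dimension one, $f_n(R,I)(x)$ eventually equals $f_{R,I}(x)$), then argues only the direction $c^I(\mathbf{m})\le\alpha(R,I)$: from $f_{R,I}(x)=0$ for $x\ge\alpha(R,I)$ one gets $\ell(R/I^{[q]})_{\lfloor xq\rfloor}=0$ for $q\gg 0$, hence $\mathbf{m}^{\lfloor xq\rfloor}\subseteq I^{[q]}$, hence $c^I(\mathbf{m})\le x$; the reverse inequality is left implicit (Proposition~\ref{p2} is stated for $\dim\ge 2$, though, as you observe, its argument goes through in dimension one once the continuity step is dropped). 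You instead prove both inequalities explicitly and replace the citation to [T2, Thm.~2.9] with the elementary observation that $f_n(R,I)(x)$ is a nonnegative integer, so a pointwise limit that is zero forces eventual vanishing, while a limit bounded below by $1$ stays positive. This is the same structural content (cyclic module generated in degree $0$ over a standard graded ring, hence an initial segment of supported degrees), but your version is more self-contained and makes the role of $q^{d-1}=1$ transparent, which the paper leaves to the reader. The reduced hypothesis on $R$ plays no visible role in either argument; it is presumably inherited from the framework of [T2] guaranteeing that $f_{R,I}$ exists as a pointwise limit.
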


\begin{proof}
 We recall (Theorem~2.9 [T2]) that $f_{R,I}$  is the pointwise 
limit of $f_n(R,I)$ (here the convergence may not be a uniform convergence). 
Moreover, for given  $x\geq 0$, there is $n_0$ such that for all 
$q = p^n\geq p^{n_0}$,  we have 
$$f_{R, I}(x) = f_n(R, I)(x) = \ell(R/I^{[q]})_{\lfloor xq\rfloor}.$$ 
Hence, for $x\geq \alpha(R, I)$, 
$$f_{R, I}(x) = 0 \implies {\bf m}^{\lfloor xq\rfloor} \subseteq I^{[q]},~~ 
\mbox{for}~~ q\geq p^{n_0} \implies c^I({\bf m})\leq x \implies c^I({\bf m}) 
= \alpha(R, I).$$
\end{proof}

\begin{propose}\label{p3} If $(R, I)$ is a standard graded pair where 
$R$ is a two dimensional domain and $I$ is generated by 
homogeneous elements of the same degree, then  $\alpha(R, I) =  c^I({\bf m})$.
\end{propose}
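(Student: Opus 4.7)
The plan is to translate both sides of the claimed equality into properties of the syzygy bundle on the projective curve $X = \mbox{Proj}~R$, after first reducing to the case $R$ is normal. Let $\tilde R$ denote the integral closure of $R$ in its field of fractions. By the additive property of the HK density function (Section~2), $f_{R,I} = f_{\tilde R, I\tilde R}$, so $\alpha(R,I) = \alpha(\tilde R, I\tilde R)$. For the $F$-threshold, choose $N$ with ${\bf m}^N \tilde R \subseteq R$ (possible since $\tilde R/R$ has finite length, as $R$ is excellent). Then any inclusion ${\bf m}^{r+1}\tilde R \subseteq I^{[q]}\tilde R$ yields ${\bf m}^{N+r+1} = {\bf m}^N \cdot {\bf m}^{r+1} \subseteq {\bf m}^N \cdot I^{[q]} \tilde R \subseteq I^{[q]}$; dividing by $q$ and letting $q \to \infty$ gives $c^I({\bf m}) \leq c^{I\tilde R}({\bf m}\tilde R)$, and the reverse inequality is immediate from $R \subseteq \tilde R$. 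So both invariants are preserved under normalization, and we may assume $R$ is a two-dimensional normal standard graded domain, in which case $X = \mbox{Proj}~R$ is a smooth projective curve.

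Writing $I = (g_1, \ldots, g_\mu)$ with $\deg g_i = d$, let $V$ be the syzygy bundle on $X$ defined by $0 \to V \to \oplus_i \sO_X(-d) \to \sO_X \to 0$, which is surjective since $I$ is ${\bf m}$-primary. Applying the $e$-th Frobenius pullback, twisting by $\sO_X(n)$, and taking the associated long exact cohomology sequence expresses $\ell((R/I^{[q]})_n)$, for $n$ outside a bounded window, as $h^1(X, F^{e*}V(n))$ modulo correction terms coming from $h^i(\sO_X(n))$ and $h^i(\sO_X(n-dq))$ that vanish in the asymptotic range. The vanishing $h^1(X, F^{e*}V(n)) = 0$ occurs precisely when $n > 2g(X) - 2 - \mu_{min}(F^{e*}V)$, and $\mu_{min}(F^{e*}V)/q$ converges to the strong minimum HN slope $a_{min}(V)$ (normalized by $\deg \sO_X(1)$). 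This controls the top socle degree $b_q := \max\{n : (R/I^{[q]})_n \neq 0\}$ and identifies $c^I({\bf m}) = \lim_q b_q/q$ as an explicit expression in $a_{min}(V)$.

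The same cohomological analysis shows that $f_{R,I}(x)$ is governed by exactly the same strong HN data of $V$: just below $x = \lim b_q/q$, the length $\ell((R/I^{[q]})_{\lfloor xq \rfloor})$ grows linearly in $q$, so it contributes nontrivially to the limit function $f_{R,I}(x)$, forcing $\alpha(R,I) = \lim b_q/q = c^I({\bf m})$. The main obstacle will be the careful tracking of how $\mu_{min}$ behaves under iterated Frobenius and the reconciliation of the asymptotic nature of $\alpha(R,I)$ (a limit of normalized lengths) with the literal vanishing statement defining $c^I({\bf m})$; this is exactly the syzygy-bundle analysis developed in Sections~5 and~6. An alternative, more economical route: once the reduction to the normal case is made, the equality is a direct consequence of Theorem~E(b), because a two-dimensional normal domain is regular, hence strongly $F$-regular, on its punctured spectrum.
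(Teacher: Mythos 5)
The reduction to the normal case is where the argument breaks: the claim that $\tilde R/R$ has finite length, hence that ${\bf m}^N\tilde R\subseteq R$ for some $N$, is false unless $R$ is already normal on the punctured spectrum. Excellence gives module-finiteness of $\tilde R$ over $R$, but for a two-dimensional standard graded domain whose $\mbox{Proj}$ is a singular curve — e.g.\ $R = k[x,y,z]/(x^2z-y^3)$ — the conductor is a height-one ideal and $\tilde R/R$ is a one-dimensional graded module, so no power of ${\bf m}$ annihilates it. Consequently the inequality $c^I({\bf m})\leq c^{I\tilde R}({\bf m}\tilde R)$ is unproved, and this is not peripheral: together with the additivity $\alpha(R,I)=\alpha(\tilde R,I\tilde R)$ and the trivial direction $c^{I\tilde R}({\bf m}\tilde R)\leq c^I({\bf m})$, that inequality is equivalent to the inequality $c^I({\bf m})\leq\alpha(R,I)$ you are trying to prove. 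Both of your routes depend on it — the syzygy-bundle analysis needs a smooth curve to speak of strong HN slopes, and Theorem~\ref{p5} applied directly to $R$ would presuppose strong $F$-regularity off ${\bf m}$, which for a two-dimensional ring already forces $\mbox{Proj}~R$ to be smooth.

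The paper's proof avoids normalization entirely, working on $X=\mbox{Proj}~R$ — possibly singular — with the syzygy kernel $V$ just a coherent sheaf. In place of HN slopes it uses two ingredients: the uniform convergence estimate $|f_n(x)-f(x)|\leq C_1/p^n$ from [T2], which shows that for $x\geq x_0:=\alpha(R,I)$ the lengths $\ell(R/I^{[q]})_{\lfloor xq\rfloor}$, equivalently $h^1(X,F^{n*}V(m))$ for $m\geq (x_0-d_0)q$, are bounded by $C_1$ uniformly in $q$; and a propagation-of-surjectivity argument across a hyperplane section $Y\subset X$, showing that the connecting map $H^1(X,F^{n*}V(m))\to H^1(X,F^{n*}V(m+1))$, once an isomorphism, remains one for all larger $m$, so Serre vanishing forces $h^1(X,F^{n*}V(m))=0$ from that point on. The uniform bound by $C_1$ then pins the onset of vanishing to $m\geq (x_0-d_0)q+C_1+1$, giving $c^I({\bf m})\leq x_0$. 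Nothing here requires $X$ smooth. To salvage your approach you would need an independent proof that the $F$-threshold is preserved under passage to the integral closure for two-dimensional graded domains; lacking that, the cohomological argument on the possibly singular curve is the way to proceed.
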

\begin{proof}After Proposition~\ref{p2},
we only need to prove  $c^I({\bf m})\leq \alpha(R, I)$.
Let $I$ be generated by homogeneous generators $h_1, \ldots, h_{\mu}$ 
all of degree $d_0$.
By Proposition~2.12 of [T2], there is a constant $C_0$ such that for all $n\geq 0$ and for all
$x\in [1, \infty)$ (here $q=p^n$),
$$|f_n(x) - f_{n+1}(x)| \leq C_0/p^n.$$ Therefore there is a constant
$C_1$ such that $$|f_n(x) - f(x)| \leq C_1/p^n.$$

Let $x_0 = \alpha(R, I)$. Then, by Remark~\ref{r1}, we have  $x_0 > d_0$ 
(note here $q^{d-1} = q = p^n$) and
$$\frac{1}{p^n}\ell(R/{I}^{[q]})_{\lfloor xq\rfloor} =
f_n(x) \leq C_1/p^n,~~
\mbox{ for all}~~
x\geq x_0\quad\mbox{and}~~ n\geq 0.$$
Therefore
$$\ell(R/{I}^{[q]})_{\lfloor xq\rfloor}  \leq C_1,
~~\mbox{ for all}~~x\geq x_0\quad \mbox{and}~~~n\geq 0.$$
Let $X = {\rm Proj}~R$. Let
$$0\longrightarrow V \longrightarrow \oplus^{\mu}\sO_X\longrightarrow 
\sO_X(d_0)\longrightarrow 0,$$
be the short exact sequence of $\sO_X$-sheaves, where
$\oplus^{\mu} \sO_X\longrightarrow \sO_X(d_0)$ is the  multiplication 
map given by $(a_1, \ldots, a_{\mu}) \to \sum_{i}h_ia_i$ 
(here $h_i$ are the chosen generators).
This gives a long exact sequence of $\sO_X$-modules, for every $m\in \Z$,
$$0\longrightarrow H^0(X, (F^{n*}V)(m))
 \longrightarrow \oplus H^0(X, \sO_X(m))
 \longrightarrow  H^0(X, \sO_X(m+qd_0))
 \longrightarrow $$
$$\longrightarrow H^1(X, (F^{n*}V)(m))
 \longrightarrow \oplus H^1(X, \sO_X(m))
 \longrightarrow \cdots.$$

 We can choose
$m_1\geq 0$ such that, for all $m\geq m_1$,
$$H^1(X, \sO_X(m))=  0\quad{\rm
and}\quad H^0(X, \sO_X(m)) = R_m.$$
In particular, for $m\geq m_1$,
$$h^1(X, (F^{n*}V)(m)) = \ell(R/{I}^{[q]})_{m+qd_0}.$$
Since $\alpha(R, I) > d_0$ we can assume that $c^I({\bf m}) > d_0$ and
therefore we can also choose $q_0$
 large enough so that  we have  $(x_0-d_0)q_0 \geq m_1$.
Then for $q=p^n\geq q_0$,
\begin{equation}\label{ee2}
h^1(X, (F^{n*}V)(m)) = \ell(R/{I}^{[q]})_{m+qd_0} \leq C_1,
\quad\mbox{for all}\quad m\geq (x_0-d_0)q.
\end{equation}

 We choose  $z\in H^0(X, \sO_X(1))$ such that we have a short exact sequence
of $\sO_X$-modules
$$ 0\longrightarrow \sO_X(-1)\longrightarrow \sO_X\longrightarrow \sO_Y\longrightarrow 0,$$
which induces the long exact sequence of $k$ vector-spaces
$$0\longrightarrow H^0(X, F^{n*}V(m))
 \longrightarrow H^0(X, F^{n*}V(m+1))
\longrightarrow H^0(X, F^{n*}V(m+1)\mid_Y) $$
$$\longrightarrow H^1(X, F^{n*}V(m))
 \longrightarrow H^1(X, F^{n*}V(m+1))
\longrightarrow 0,$$
as $Y$ is a $0$-dimensional scheme,  for every $m \in \Z$, we have
$H^1(X, F^{n*}V(m)\mid_Y) = 0$ and
$H^0(X, F^{n*}V(m)\mid_Y) = C_2$, for some constant $C_2$.

\vspace{5pt}

\noindent{\bf Claim}.\quad $h^1(X, F^{n*}V(m)) = 0$,
for all $m \geq (x_0-d_0)q+C_1+1$, where $q\geq q_0$.

\vspace{5pt}

\noindent{\underline{Proof of the claim}}:\quad
Suppose  the map
$$ h_X(m): H^1(X, F^{n*}V(m))
 \longrightarrow H^1(X, F^{n*}V(m+1))$$ is an isomorphism for some 
  $m = m_0\geq 1$. Then
we prove that $h^1(X, F^{n*}V(m)) = 0$, for all $m\geq m_0$.
Note that  the map $h_X(m)$
is an isomorphism if and only if the canonical
map
$$H^0(X, F^{n*}V(m+1))
\longrightarrow H^0(X, F^{n*}V(m+1)\mid_Y) \longrightarrow 0$$
is surjective.
We have the following commutative diagram of canonical maps, where the
top horizontal map is surjective
$$\begin{array}{ccc}
H^0(X, F^{n*}V(m_0+1))\tensor H^0(X, \sO_X(1)) & \longrightarrow  &
H^0(X,F^{n*}V(m_0+1)\mid_Y)\tensor H^0(X,\sO_X(1))\\
\downarrow{} & & \downarrow\\
H^0(X, F^{n*}V(m_0+2)) & \longrightarrow &
H^0(X, F^{n*}V(m_0+2)\mid_Y)\end{array}$$
Moreover the second vertical map is surjective as $Y$ is $0$-dimensional and
$H^0(X, \sO_X(1))$ contains elements which do not vanish on $Y$.
Hence the bottom horizontal map is surjective.
By induction on $m$, it follows  that the map
$$H^1(X, F^{n*}V(m)) \longrightarrow H^1(X, F^{n*}V(m+1))$$
is an isomorphism, for all $m\geq m_0 $.
Since, by Serre vanishing  $H^1(X, F^{n*}V(m)) = 0$, for $m>>0$, 
we conclude  that
$h^1(X, F^{n*}V(m)) = 0$, for all $m\geq m_0$.

\vspace{5pt}

Now let $q\geq q_0$. Then, we claim that the map $h_X(m)$ is an isomorphism for some
$m\in [(x_0-d_0)q, (x_0-d_0)q+C_1)$: Otherwise
$$h^1(X, F^{n*}V(m))- h^1(X, F^{n*}V(m+1)) \geq 1,~~
\forall~~
 m\in [(x_0-d_0)q, (x_0-d_0)q+C_1+1),$$ which would imply that
$h^1(X, F^{n*}V({\lceil(x_0-d_0)q)\rceil}) > C_1$. But this
 contradicts the
inequality~(\ref{ee2}). Hence  the claim.

Therefore,
$$ h^1(X, F^{n*}V(m)) = \ell(R/I^{[q]})_{m+qd_0} = 0,
\quad \mbox{for all}\quad m\geq (x_0-d_0)q+C_1+1
\quad \mbox{and}\quad q\geq q_0.$$
$$\implies {\bf m}^{m+qd_0}\subset I^{[q]},\quad\mbox{for all}
\quad m+qd_0\geq x_0q+C_1+1\quad \mbox{and}\quad q\geq q_0$$

$$\implies c^I({\bf m}) \leq \lim_{q\to \infty}\frac{x_0q+C_1+1}{q} =
x_0 = \alpha(R, I).$$\end{proof}
 
\vspace{10pt}

In higher dimensional case we get such an equality when the ring is strongly 
$F$-regular on the punctured spectrum or when 
$\mbox{Proj}~R$ is smooth.
We recall the following definition from [HH2]. 

\begin{defn}\label{d6}A  Noetherian domain $R$ such that $R\longto R^{1/p}$ 
is module finite over $R$, is {\em strongly $F$-regular} if for every nonzero
$c\in R$ there exists $q$ such that $R$-linear map $R\longto R^{1/q}$ 
that sends $1$ to $c^{1/q}$ splits as a map of $R$-modules, {\em i.e.} 
iff $Rc^{1/q}\subseteq R^{1/q}$ splits over $R$.
A regular ring is strongly $F$-regular ([HH2]).
\end{defn}

\vspace{10pt}

Since the following lemma is easy to check we state it without the proof.

\begin{lemma}\label{freg} For a standard graded pair
$(R, I)$ and 
for a  fixed power $q_0$ of $p$
\[f_{R, I^{[q_0]}}(q_0x) = q_0^{d-1} f_{R,I}(x),\quad\mbox{for all}~~x\geq 0.\]
In particular  $\alpha(R, I^{[q_0]}) = q_0 \alpha(R,I)$.  
\end{lemma}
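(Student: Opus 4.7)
The plan is to reduce everything to the definition of $f_n(M,I)$ and then pass to the limit. Write $q_0 = p^{n_0}$. For any $n \geq 0$ and $q = p^n$, note that $(I^{[q_0]})^{[q]} = I^{[q_0 q]} = I^{[q']}$ where $q' = p^{n+n_0}$. So from the very definition of the approximating sequence,
\[
f_n(R, I^{[q_0]})(q_0 x) \;=\; \frac{1}{q^{d-1}} \,\ell\!\left((R/I^{[q']})_{\lfloor q_0 x q \rfloor}\right) \;=\; \frac{q_0^{d-1}}{(q')^{d-1}} \,\ell\!\left((R/I^{[q']})_{\lfloor x q' \rfloor}\right) \;=\; q_0^{d-1}\, f_{n+n_0}(R, I)(x),
\]
using that $q_0 x q = x q'$ and $(q')^{d-1} = q_0^{d-1} q^{d-1}$.

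Next I would let $n \to \infty$ (equivalently $n+n_0 \to \infty$). Since the sequence $\{f_n(R,I)\}_n$ converges uniformly on $[0,\infty)$ to $f_{R,I}$, and the same holds for $\{f_n(R, I^{[q_0]})\}_n$ converging to $f_{R, I^{[q_0]}}$, the equality of the two pointwise limits yields
\[
f_{R, I^{[q_0]}}(q_0 x) \;=\; q_0^{d-1}\, f_{R,I}(x), \qquad \text{for all } x \geq 0.
\]

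For the statement about $\alpha$, observe that since $q_0^{d-1} > 0$, one has $f_{R, I^{[q_0]}}(q_0 x) > 0$ if and only if $f_{R,I}(x) > 0$. Taking the supremum over such $x$ on both sides, we obtain
\[
\alpha(R, I^{[q_0]}) \;=\; \sup\{q_0 x : f_{R,I}(x) > 0\} \;=\; q_0 \,\alpha(R, I).
\]

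No substantial obstacle is expected: the argument is a direct unwinding of the definition combined with uniform convergence (the latter being precisely Proposition~2.12 of [T2], which was already cited in the proof of Proposition~\ref{p3}). The only subtlety worth double-checking is the floor rearrangement $\lfloor q_0 x q \rfloor = \lfloor x q' \rfloor$, which is an exact equality because $q_0 q = q'$ is an integer multiplier.
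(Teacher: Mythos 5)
Your proof is correct. The paper omits the proof of this lemma, stating only that it is ``easy to check''; your direct computation from the definition of $f_n$ (using $(I^{[q_0]})^{[q]} = I^{[q_0q]}$, the exact floor identity $\lfloor q_0xq\rfloor = \lfloor xq'\rfloor$, and uniform convergence of the approximating sequence) is exactly the intended routine verification.
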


\begin{thm}\label{p5}Let $(R,I)$ be a standard graded pair of dimension 
$d\geq 2$.
$$R~~\mbox{is strongly}~F\mbox{-regular on}~~ \Spec R \setminus 
\{{\bf m}\} \implies
 \alpha(R,I) = c^{I}({\bf m}).$$
 
 Thus the above equality holds
 for a standard graded pair $(R, I)$, provided $R$ is 
a  domain and $X=\mbox{Proj}~R$ 
is strongly $F$-regular.
 
In particular the equality  $\alpha(R,I) = c^I({\bf m})$ holds 
 for any two dimensional standard graded pair $(R, I)$, where $R$ is a normal 
domain (equivalently $\mbox{Proj}~R$ is a nonsingular curve).

\end{thm}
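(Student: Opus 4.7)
My plan is to supplement the inequality $\alpha(R,I) \le c^I({\bf m})$ from Proposition~\ref{p2} with the reverse inequality, and to follow the blueprint of Proposition~\ref{p3} but with higher-dimensional inputs enabled by the strong $F$-regularity hypothesis. Suppose for contradiction that $\alpha(R,I) < c^I({\bf m})$ and pick $x$ strictly between them. By definition of $\alpha(R,I)$ together with the uniform rate of convergence of $f_n(R,I)$ to $f_{R,I}$ (Proposition~2.12 of [T2], just as used in the proof of Proposition~\ref{p3}), one gets $\ell((R/I^{[q]})_{\lfloor xq\rfloor}) = O(q^{d-2})$; on the other hand, $x < c^I({\bf m})$ forces $(R/I^{[q]})_{\lfloor xq\rfloor} \neq 0$ for infinitely many $q$. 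The contradiction to produce is that these two facts cannot coexist.

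The geometric translation I intend to use is the syzygy sheaf on $X = \Proj R$. Writing $I = (h_1,\ldots,h_\mu)$ with $\deg h_i = d_i$, one has a short exact sequence
\[
0 \longrightarrow V \longrightarrow \bigoplus_i \sO_X(-d_i) \longrightarrow \sO_X \longrightarrow 0,
\]
and, after applying $F^{n*}$, twisting by $\sO_X(m)$, and taking cohomology, the standard Serre vanishing on $X$ gives $\ell((R/I^{[q]})_m) = h^1(X, F^{n*}V(m))$ for all $m \geq m_1$, exactly as in the proof of Proposition~\ref{p3}. The key point is that the hypothesis of $R$ being strongly $F$-regular on $\Spec R \setminus \{{\bf m}\}$ is equivalent, for a graded domain, to $X$ being (globally) strongly $F$-regular: every homogeneous affine piece $R_{(f)}$ is a summand of $R_f$, and strong $F$-regularity is a local property preserved under such homogeneous localisations.

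The heart of the argument, and where I expect the main obstacle, is to show that once $h^1(X, F^{n*}V(m))$ is $o(q^{d-1})$ in degrees $m \ge \alpha(R,I)q$, it must actually vanish for all sufficiently large $q$. The plan is to induct on $d=\dim R$, with the base case $d=2$ supplied by Proposition~\ref{p3}. For the inductive step, cut by a general linear form $z \in R_1$; by a Bertini-type theorem for globally $F$-regular varieties, the divisor $Y = V(z) \cap X$ is again strongly $F$-regular, so the inductive statement applies to the hyperplane section. Using the restriction sequence
\[
0 \longrightarrow F^{n*}V(m) \longrightarrow F^{n*}V(m+1) \longrightarrow F^{n*}V\big|_Y(m+1) \longrightarrow 0,
\]
one compares successive cohomologies and argues (as in Proposition~\ref{p3}, but with $h^1(Y,\cdot)$ controlled by the induction hypothesis instead of being automatically zero) that either $h^1(X,F^{n*}V(m))$ stabilises (hence vanishes by Serre vanishing), or each successive difference contributes at least on the order of $q^{d-2}$; a pigeonhole then contradicts the $O(q^{d-2})$ bound over an interval of length $O(q^{d-2})$. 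Matching the syzygy bundle $V|_Y$ with the syzygy bundle of the pair $(R/zR, I(R/zR))$, so that the induction hypothesis can be cleanly invoked, is the most delicate bookkeeping step.

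For the stated consequences: if $R$ is a domain with $X=\Proj R$ strongly $F$-regular, then for every $\mathfrak{p} \in \Spec R \setminus \{{\bf m}\}$ there is a homogeneous $f \notin \mathfrak{p}$ of positive degree, and $R_\mathfrak{p}$ is a localisation of $R_{(f)}[z,z^{-1}]$ with $R_{(f)} = \Gamma(D_+(f),\sO_X)$ strongly $F$-regular by hypothesis; hence $R_\mathfrak{p}$ is strongly $F$-regular and the first part of the theorem applies. Finally, for a two-dimensional normal graded domain, $X$ is a nonsingular projective curve, i.e.\ regular, and regular $F$-finite rings are strongly $F$-regular by Definition~\ref{d6}, so $X$ is strongly $F$-regular and the previous case applies directly.
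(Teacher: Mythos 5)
Your proposed route is genuinely different from the paper's, and the paper's choice is not an accident: the machinery you want to use breaks at the first step once $\dim R \geq 3$.

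The paper does not touch syzygy bundles at all here. Instead it invokes Theorem~5.10 of [HH2]: strong $F$-regularity on the punctured spectrum gives an ${\bf m}$-primary test ideal $\tau(R)$, so there is $n_0$ with ${\bf m}^{n_0} J^{*} \subseteq J$ for every ideal $J$. Given $\beta \in \N[1/p]$ with $\beta < c^I({\bf m})$, one picks $q_0$ so that ${\bf m}^{\beta q_0 + n_0} \not\subseteq I^{[q_0]}$, deduces ${\bf m}^{\beta q_0} \not\subseteq [I^{[q_0]}]^{*}$, chooses a homogeneous $z$ outside that tight closure, and sets $J = (z, I^{[q_0]})$. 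Then $e_{HK}(R, I^{[q_0]}) - e_{HK}(R, J) > 0$ by [HH1], while the density functions agree for $x < \beta q_0$ because $\deg z^q \geq q q_0 \beta$, so the positive mass of $f_{R, I^{[q_0]}} - f_{R, J}$ lives entirely in $[\beta q_0, \infty)$, forcing $\alpha(R, I^{[q_0]}) > \beta q_0$ and hence (by Lemma~\ref{freg}) $\alpha(R, I) > \beta$. This is short, purely algebraic, and dimension-free.

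Your sketch has a concrete gap at the geometric translation. The identity $\ell((R/I^{[q]})_m) = h^1(X, F^{n*}V(m))$ used in Proposition~\ref{p3} is special to $\dim X = 1$: there the long exact cohomology sequence for the syzygy short exact sequence truncates at $H^1$, and one also arranges $H^1(X, \sO_X(m))=0$ for the relevant $m$. As soon as $\dim X \geq 2$, the sequence continues into $H^2$, and $\ell((R/I^{[q]})_m)$ is at best an alternating sum of cohomology groups of $F^{n*}V$ and the trivial summands, several of which need not vanish in the degree ranges you need (the twist degrees $m - q d_i$ can sit in the intermediate range where $H^1$ of the line bundle is nonzero). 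So the key equality ``length equals a single $h^1$'' that your whole induction hangs on is simply false for $d \geq 3$. The remainder of the plan (Bertini for $F$-regular hyperplane sections, matching $V|_Y$ with the syzygy bundle of $(R/zR, I(R/zR))$, the pigeonhole on successive differences) is also unproved and, as you acknowledge, delicate; but even before that, the cohomological dictionary fails. You also conflate ``strongly $F$-regular on $\Spec R \setminus \{{\bf m}\}$'' with ``$X$ globally $F$-regular'' in the Schwede--Smith sense; these are not the same thing, and the Bertini theorem you want is stated for the latter notion. The correct and much more economical engine is the test-ideal argument above.
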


\begin{proof}Is enough to prove that $\alpha(R,I) \geq c^{I}({\bf m})$.

By Theorem~5.10 of [HH2], there exists  $n_0$ 
such that ${\bf m}^{n_0}\subset \tau(R)$, where $\tau(R)$ denotes the test 
ideal of $R$.
Hence, for every ideal $J$ of $R$ we have
${\bf m}^{n_0}J^*\subseteq J$  where 
$J^*$ denotes the tight closure of $J$ in $R$.

It is enough to show that, if  $\beta \in \N[1/p]$ with  
 $\beta < c^{I}({\bf m})$ then
$\beta < \alpha(R,I)$. 
Let $2\epsilon = c^{I}({\bf m}) - \beta >0$.

Now we choose a power 
$q_0$ of $p$ such that, for $q\geq q_0$, we have
 $\beta q\in \N$, $\epsilon q \geq n_0$ and 
${\bf m}^{\beta q+\lfloor \epsilon q\rfloor} \not\subseteq I^{[q]}$.
In particular ${\bf m}^{\beta q_0+n_0}\not\subseteq I^{[q_0]}$ and therefore
${\bf m}^{\beta q_0}\not\subseteq [I^{[q_0]}]^*$.

We choose a homogeneous element 
$z \in {\bf m}^{\beta q_0} \setminus [I^{[q_0]}]^*$.  
Let  $J = (z, I^{[q_0]})$. By [HH1],  
$e_{HK}(R, I^{[q_0]}) 
- e_{HK}(R,J) >0$ and $e_{HK}(R, I^{[q_0]}) =  e_{HK}(R, [I^{[q_0]}]^*)$.
Therefore, by Remark~2.15 of [T2],
$$f_{R,[I^{[q_0]}]^*}(x) = f_{R,I^{[q_0]}}(x) \geq f_{R, J}(x),
\quad\mbox{for all}~~x.$$

Moreover, 
if  $x<\beta q_0 = \lfloor \beta q_0\rfloor$, 
then $$\deg~z^q\geq qq_0\beta \implies  (R/I^{[qq_0]})_{\lfloor xq\rfloor} = 
(R/(z^qR + I^{[qq_0]}))_{\lfloor xq\rfloor},\quad\mbox{for all}~~q.$$
Hence $f_{R,I^{[q_0]}}(x) = f_{R, J}(x)$, for $x<\beta q_0$.
In particular 
$$e_{HK}(R, I^{[q_0]}) 
- e_{HK}(R,J) = \int_{\beta q_0}^\infty (f_{R,I^{[q_0]}}(x) - 
f_{R, J}(x))dx >0.$$
This implies $\alpha(R,I^{[q_0]}) > \beta q_0$ and  by Lemma~\ref{freg},  
 $\alpha(R,I) > \beta$. Therefore $\alpha(R, I) \geq c^{I}({\bf m})$.

If $\mbox{Proj}~R$ is strongly $F$-regular then for any nonzero homogeneous 
element 
$c\in {\bf m}$, the ring $R_{(c)}$ (the subring consisting of degree $0$ elements 
of $R_c$)
 and therefore $R_c$ is strongly $F$-regular.
Hence  (by Theorem~5.10 of [HH2])
$\tau(R)$ is an ${\bf m}$-primary graded ideal.
Now the above arguement proves the second assertion.
\end{proof}

The equality $\alpha(R, I) = c^I({\bf m})$ also  holds when $I$ is generated by 
a system of parameters 
and then (not surprisingly) the number is 
decided by the degrees of the generators.

\begin{thm}\label{c11}Let $R$ be a standard graded pair of dimension 
 $d\geq 2$ over a perfect field of characteristic $p$ and 
 $I$ be 
generated by homogeneous system of parameters $f_1, \ldots, f_d$ of 
degrees $n_1,n_2, \ldots, n_d$ respectively. Then 
$$\alpha(R, I) = c^I({\bf m}) = n_1+n_2+\cdots + n_d.$$\end{thm}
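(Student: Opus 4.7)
My plan is to establish the two equalities separately: (i) $\alpha(R,I) = n_1 + \cdots + n_d$ directly from Lemma~\ref{l1}, and (ii) $c^I({\bf m}) \leq n_1 + \cdots + n_d$ via graded Noether normalization. Combined with Proposition~\ref{p2} this gives the chain $n_1 + \cdots + n_d = \alpha(R,I) \leq c^I({\bf m}) \leq n_1 + \cdots + n_d$, completing the proof.

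For (i), Lemma~\ref{l1} yields $f_{R,I}(x) = e(R)\,V_{d-1}(n_1,\ldots,n_d)(x)$, and $V_{d-1}(n_1,\ldots,n_d)(x)$ records the $(d-1)$-volume of the slice $P \cap H_x$ of the box $P = [0,n_1] \times \cdots \times [0,n_d]$ by the hyperplane $H_x$. A quick inspection shows this slice is empty for $x \notin [0, n_1+\cdots+n_d]$, reduces to a single vertex (the origin at $x=0$, and the opposite vertex $(n_1,\ldots,n_d)$ at $x = n_1+\cdots+n_d$), and is a genuine $(d-1)$-polytope of positive volume on the open interval in between. Hence $\alpha(R,I) = n_1 + \cdots + n_d$ by Definition~\ref{d1}.

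For (ii), I would exploit that a homogeneous system of parameters $f_1,\ldots,f_d$ in a $d$-dimensional graded ring is automatically algebraically independent over $k$, so $A := k[f_1,\ldots,f_d] \subset R$ is a polynomial subring (graded by $\deg f_i = n_i$) over which $R$ is a finite module, by graded Noether normalization (equivalently, graded Nakayama applied to the finite-length quotient $R/(f_1,\ldots,f_d)R$). Fix homogeneous $A$-module generators $x_1,\ldots,x_s$ of $R$ and set $D = \max_i \deg x_i$. Since $A/(f_1^q,\ldots,f_d^q)$ has top nonzero degree $(q-1)(n_1 + \cdots + n_d)$, realized by the monomial $f_1^{q-1}\cdots f_d^{q-1}$, it follows that $(R/I^{[q]})_r = 0$ for every $r > D + (q-1)(n_1 + \cdots + n_d)$. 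In a standard graded ring ${\bf m}^r = R_{\geq r}$, so ${\bf m}^{r+1} \subseteq I^{[q]}$ for every such $r$, which yields $\nu^I_q({\bf m}) \leq D + (q-1)(n_1 + \cdots + n_d)$ and hence $c^I({\bf m}) = \lim_{q\to\infty} \nu^I_q({\bf m})/q \leq n_1 + \cdots + n_d$.

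The main potential pitfall is (ii): since $R$ is not assumed Cohen--Macaulay, one cannot simply read off the top degree of $R/I^{[q]}$ from a Hilbert-series formula. The Noether-normalization reduction cleanly sidesteps this obstacle, reducing the question to the elementary socle-degree calculation in the polynomial ring $A$.
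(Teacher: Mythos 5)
Your proof is correct, and it takes a genuinely different route from the paper for the key estimate $c^I(\mathbf m)\le n_1+\cdots+n_d$. The paper proves a Claim (existence of a constant $l_0$ with $\ell(R/I^{[q]})_{\lfloor xq\rfloor}=0$ once $\lfloor xq\rfloor\ge (\sum n_i)q+l_0$) by \emph{induction on $d$}: the base case $d=1$ decomposes $R$ as a direct sum of free and torsion modules over the PID $k[f_1]$, and the inductive step feeds this bound through the surjection~(\ref{e8}) of Section~2 — the same piece of machinery that drives the proof of Lemma~\ref{l1}. You instead reduce in one shot via graded Noether normalization: $R$ is finite over the (weighted) polynomial subring $A=k[f_1,\ldots,f_d]$, $A/(f_1^q,\ldots,f_d^q)$ vanishes above degree $(q-1)\sum n_i$, so a surjection $\oplus_i A/(f_1^q,\ldots,f_d^q)(-\deg x_i)\twoheadrightarrow R/I^{[q]}$ gives $(R/I^{[q]})_r=0$ for $r>D+(q-1)\sum n_i$ with the explicit constant $D=\max_i\deg x_i$; together with $\mathbf m^r=R_{\ge r}$ this kills $\nu^I_q(\mathbf m)/q$ in the limit. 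In the end both arguments are avatars of the same principle — $R$ is module-finite over a polynomial subring generated by the parameters — but yours avoids the dimension induction and the intermediate surjection~(\ref{e8}) entirely, yielding a shorter and self-contained verification with an explicit bound, while the paper's route reuses existing scaffolding from the proof of Lemma~\ref{l1} at the cost of bootstrapping the constant $l_0$ through the induction. Both, as you note, rely on Proposition~\ref{p2} for $\alpha(R,I)\le c^I(\mathbf m)$ and on Lemma~\ref{l1} for $\alpha(R,I)=\sum n_i$, and your small verification that $k[f_1,\ldots,f_d]$ is an honest polynomial ring (the kernel of $k[X_1,\ldots,X_d]\to R$ is a prime of height $0$, hence $0$) is valid even without a domain hypothesis.
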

\begin{proof} By Lemma~\ref{l1}, we have
$n_1+n_2+\cdots + n_d = \alpha(R, I)$.
Enough to prove the following claim.
\vspace{5pt}

\noindent{\bf Claim}.~~There exists a constant $l_0\in\N$ such that 
$\ell(R/(f_1^q, \ldots, f_d^q))_{\lfloor xq\rfloor}=0$, for 
all ${\lfloor xq\rfloor}\geq (n_1+\cdots+n_d)q+l_0$.
\vspace{5pt}

\noindent{\underline{Proof of the claim}}:~~We prove by induction on $d$. 
If $\dim~R=1$ then we  $k[f_1]\longto R$ is a finite graded map of degree $0$.
Hence $R$ is a direct sum of free and torsion modules over the 
principal ideal domain  $k[f_1]$. Therefore there exists
$l_0$ such that 
$\ell(R/(f_1^q))_{\lfloor xq\rfloor}=0$, for 
all ${\lfloor xq\rfloor}\geq n_1q+l_0$.
Rest of  the claim follows from  the surjective map 
(\ref{e8}) from Section~2.\end{proof}

As a corollary we have  bounds on $\alpha(R,I)$ in terms of the 
degrees of the generators of $I$. Moreover if 
$R$ is a polynomial ring then the   
 bound is explicit.

\begin{cor}\label{rsu}
If  $R$ is a standard graded $n$-dimensional 
ring and $I$ is generated by homogeneous elements 
$h_1, \ldots, h_s$ of degree $d_1\leq \cdots \leq d_s$ respectively.
Then
\begin{enumerate}
\item $d_1 \leq \alpha(R, I) \leq c^I({\bf m}) \leq d_1+\cdots+d_s$.
\item If 
 $R = k[X_1, \ldots, X_n]$ is a polynomial ring of dimension $\geq 2$
 then 
$$\alpha(R,I) = c^I({\bf m}) = 
\mbox{max}~\{s\mid {\bf m}^s\not\subseteq I\}+n.$$ 
\end{enumerate}
\end{cor}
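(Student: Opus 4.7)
My plan is to prove (1) by chaining three ingredients already available in the paper and (2) by a direct Kunz-basis calculation specific to the polynomial ring.

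For part (1), the bound $d_1 \leq \alpha(R,I)$ is immediate from Remark~\ref{r1} (in fact giving strict inequality), and $\alpha(R,I) \leq c^I({\bf m})$ is exactly Proposition~\ref{p2}. For the remaining bound $c^I({\bf m}) \leq d_1+\cdots+d_s$, I would construct a homogeneous system of parameters $g_1,\ldots,g_n \in I$ with $\sum \deg g_i \leq \sum_j d_j$ and invoke Theorem~\ref{c11} via $c^I({\bf m}) \leq c^{(g_1,\ldots,g_n)}({\bf m}) = \sum \deg g_i$. Such a $(g_i)$ is built inductively by graded prime avoidance: at step $k$, choose $g_k \in I$ of some degree $d_{i_k}$ -- as a generic $k$-linear combination of the generators of that degree when an individual $h_{i_k}$ lies in one of the finitely many height-$(k-1)$ minimal primes of $(g_1,\ldots,g_{k-1})$ (which cannot contain $I$ since $\sqrt{I}={\bf m}$). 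Drawing $n$ degrees (with multiplicity at most the count of same-degree generators) from the multiset $\{d_1,\ldots,d_s\}$ keeps $\sum \deg g_i \leq \sum_j d_j$.

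For part (2), since $R = k[X_1,\ldots,X_n]$ is regular (hence strongly $F$-regular), Theorem~\ref{p5} gives $\alpha(R,I) = c^I({\bf m})$, so it suffices to compute $c^I({\bf m}) = \lim_q a(I^{[q]})/q$ where $a(I^{[q]}) := \max\{s \mid {\bf m}^s \not\subseteq I^{[q]}\}$. Setting $a := \max\{s \mid {\bf m}^s \not\subseteq I\}$, I would establish $a(I^{[q]}) = q(a+n)-n$ via matching bounds. For the upper bound, given a monomial $X^{{\bf c}}=X_1^{c_1}\cdots X_n^{c_n}$ with $\sum c_i \geq q(a+n)-n+1$, write $c_i = qb_i+r_i$, $0 \leq r_i < q$; the distribution minimizing $\sum b_i$ (setting $n-1$ of the $r_i$'s equal to $q-1$) still yields $\sum b_i \geq a+1$. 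So $X^{{\bf b}} \in {\bf m}^{a+1} \subseteq I$; writing $X^{{\bf b}} = \sum_j \alpha_j h_j$ and raising to the $q$-th power, characteristic-$p$ linearity $(\sum_j \alpha_j h_j)^q = \sum_j \alpha_j^q h_j^q$ places $X^{q{\bf b}} \in I^{[q]}$, whence $X^{{\bf c}} = X^{{\bf r}} X^{q{\bf b}} \in I^{[q]}$.

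For the lower bound, pick a homogeneous $g \in R_a \setminus I$ (which exists because ${\bf m}^a \not\subseteq I$ forces $R_a \not\subseteq I$ via $R_{\geq a} = R \cdot R_a$) and set $G := g^q X_1^{q-1}\cdots X_n^{q-1}$ of degree $q(a+n)-n$. By Kunz's theorem, every $f \in R$ admits a unique decomposition $f = \sum_{{\bf a}} f_{{\bf a}}^q X^{{\bf a}}$ with $f_{{\bf a}} \in R$ and $0 \leq a_i < q$; the same characteristic-$p$ argument then shows $f \in I^{[q]}$ if and only if every $f_{{\bf a}} \in I$. For $G$, the only nonzero Kunz coefficient is $G_{(q-1,\ldots,q-1)} = g \notin I$, so $G \notin I^{[q]}$, proving $a(I^{[q]}) \geq q(a+n)-n$ and hence $c^I({\bf m}) = a+n$. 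The hardest steps will be the degree-controlled parameter-subsystem extraction in (1) -- which may require generic $k$-linear combinations when no $n$-subset of $\{h_1,\ldots,h_s\}$ itself forms a parameter system -- and the identification of the sharp ``extremal'' element $g^q X_1^{q-1}\cdots X_n^{q-1}$ for the lower bound in (2), whose verification rests on Kunz's free-basis description of the polynomial ring.
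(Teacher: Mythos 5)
Your approach to part (1) is essentially the paper's: you chain $d_1<\alpha(R,I)$ (Remark~\ref{r1}) and $\alpha(R,I)\leq c^I({\bf m})$ (Proposition~\ref{p2}), then bound $c^I({\bf m})$ from above by exhibiting a homogeneous parameter subideal $J\subseteq I$ and invoking $c^I({\bf m})\leq c^J({\bf m})=\alpha(R,J)$ together with Theorem~\ref{c11}. (Your explicit use of the monotonicity $J\subseteq I\Rightarrow c^I({\bf m})\leq c^J({\bf m})$ is a useful clarification; the paper's displayed chain only tracks $\alpha$.) The paper simply takes $J$ generated by an $n$-element subset of $\{h_1,\ldots,h_s\}$; as you sense, no such subset need be a system of parameters --- over $\F_2[x,y]$ the three degree-$2$ generators $x^2+xy,\,xy+y^2,\,x^2+y^2$ of ${\bf m}^2$ pairwise share the factor $x+y$. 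Your remedy via graded prime avoidance with $k$-linear combinations of same-degree generators is the right instinct, but the degree bookkeeping is asserted rather than proved: at step $k$ the set of degrees $e$ for which $I_e$ escapes every relevant minimal prime of $(g_1,\ldots,g_{k-1})$ is upward-closed and always contains $d_s$, yet nothing you say guarantees it contains an unused degree in your remaining budget; if $d_s$ has small multiplicity and is consumed early, the construction could be forced repeatedly toward high degrees and overshoot $\sum_j d_j$. This is a genuine gap, though it is inherited from (and no worse than) the one the paper's own proof glosses over. For part (2), the paper just cites Theorem~\ref{p5} together with Example~2.7(iii) of [HMTW]; your self-contained Kunz-basis calculation of $\max\{r: {\bf m}^{r}\not\subseteq I^{[q]}\}=q(a+n)-n$ --- the division $c_i=qb_i+r_i$, the Frobenius-linearity step $(\sum\alpha_j h_j)^q=\sum\alpha_j^q h_j^q$, the existence of $g\in R_a\setminus I$ since $R_a\subseteq I$ would force ${\bf m}^a\subseteq I$, and the extremal element $g^qX_1^{q-1}\cdots X_n^{q-1}$ whose only nonzero Kunz coefficient is $g$ --- is correct and in effect supplies the content of the cited computation rather than referring to it.
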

\begin{proof}(1) We choose an ideal  $J$, which is
 generated by a system of parameters
$\{h_{i_1}, \ldots, h_{i_n}\}$ $\subset \{h_1,
\ldots, h_s\}$. Then, by    
Theorem~\ref{c11},
$$\alpha(R, I)   \leq \alpha(R, J) = c^J({\bf m}) = d_{i_1}+
\cdots + d_{i_n} \leq d_1+\cdots +d_s.$$

\noindent~(2)\quad 
The second assertion follows from Theorem~\ref{p5} and [HMTW], Example~2.7~(iii).
\end{proof}

The equality $\alpha(R,I) = c^I({\bf m})$ carries over to the Segre product of
standard graded pairs.

\begin{lemma}\label{l5}Let $(R_1, I_1), \ldots, (R_r, I_r)$ 
be standard graded pairs of dimension $\geq 2$, with their 
respective graded maximal ideals 
${\bf m}_1, \ldots, {\bf m}_r$ such that
 $c^{I_i}({\bf m}_i) = \alpha(R_i, I_i)$, for every $i$. Then 
$$ c^{I_1\#\cdots\#I_r}({{\bf m}_1\#\cdots\#{\bf m}_r}) = 
\alpha(R_1\#\cdots\#R_r, I_1\#\cdots \# I_r)$$ and therefore
$$c^{I_1\#\cdots\#I_r}({{\bf m}_1\#\cdots\#{\bf m}_r}) = 
\max\{c^{I_i}({\bf m}_i)\mid
1\leq i\leq r\}.$$
\end{lemma}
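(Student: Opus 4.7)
The plan is to induct on $r$, reducing via associativity of the Segre product to the case $r=2$, which I treat in detail. Write $\alpha_i := \alpha(R_i,I_i)$ and $c_i := c^{I_i}({\bf m}_i)$; by hypothesis $\alpha_i=c_i$. I would establish the common value $\max_i c_i$ first for $\alpha(R_1\#R_2,I_1\#I_2)$ (using the multiplicative HK density property) and then for $c^{I_1\#I_2}({\bf m}_1\#{\bf m}_2)$ (using Proposition~\ref{p2} for one direction and a direct upper-bound argument for the other).

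For $\alpha$ of the Segre product, the multiplicative property of Section~2 combined with the identity $F_{R_1\#R_2}(x) = F_{R_1}(x)F_{R_2}(x)$ (a direct leading-term check on the Hilbert series of a Segre product) yields
\[
f_{R_1\#R_2,\,I_1\#I_2}(x) \;=\; F_{R_1}(x)F_{R_2}(x)\;-\;\bigl(F_{R_1}(x)-f_{R_1,I_1}(x)\bigr)\bigl(F_{R_2}(x)-f_{R_2,I_2}(x)\bigr).
\]
Since $0\le f_{R_i,I_i}(x)\le F_{R_i}(x)$ with $F_{R_i}(x)>0$ for $x>0$, the right-hand side vanishes for $x>\max\{\alpha_1,\alpha_2\}$. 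Conversely, if $x<\alpha_j$ for some $j$, say $j=1$, continuity of $f_{R_1,I_1}$ and the definition of $\alpha_1$ as a supremum yield $x'>x$ with $f_{R_1,I_1}(x')>0$; at such $x'$ one factor is strictly less than $F_{R_1}(x')$ while the other is at most $F_{R_2}(x')$, so the product is strictly less than $F_{R_1}(x')F_{R_2}(x')$, whence $f_{R_1\#R_2,I_1\#I_2}(x')>0$. Thus $\alpha(R_1\#R_2,I_1\#I_2) = \max_i \alpha_i = \max_i c_i$, and Proposition~\ref{p2} gives $c^{I_1\#I_2}({\bf m}_1\#{\bf m}_2)\ge \max_i c_i$.

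For the reverse inequality $c^{I_1\#I_2}({\bf m}_1\#{\bf m}_2)\le\max_i c_i =: C$, fix $\epsilon>0$. The equality $\alpha_i=c_i$ provides, for $q=p^n$ large, the sharp ideal-level vanishing $(R_i/I_i^{[q]})_m = 0$ for every $m\ge (C+\epsilon)q$, equivalently $(R_i)_m = (I_i^{[q]})_m$. I would use this to argue that, for $m$ at least $(C+\epsilon)q + Nq_0$ (with $N$ depending only on the minimal generator degrees of $I_1$ and $I_2$ and $q_0$ a fixed reference power of $p$), the identification $(R_1\#R_2)_m = (R_1)_m\otimes(R_2)_m$ combined with the $q$-th-power decompositions of elements on each factor places every element of $(R_1\#R_2)_m$ inside $(I_1\#I_2)^{[q]}$. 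Dividing by $q$ and letting $\epsilon\to 0$ then yields $c^{I_1\#I_2}({\bf m}_1\#{\bf m}_2)\le C$, completing both equalities asserted in the lemma.

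The \textbf{main obstacle} is executing this last step. A typical generator of $(I_1\#I_2)^{[q]}$ is $(a\otimes b)^q = a^q\otimes b^q$ with $a\otimes b\in I_1\#I_2$, so $\deg a=\deg b$ and one of $a,b$ lies in the respective $I_i$; however, the expression of $a\in (I_1^{[q]})_m$ as a sum of $q$-th powers of $I_1$-generators uses generators of varying degrees $d_{1,\alpha}$, and the analogous expression for the $R_2$-side uses degrees $d_{2,\beta}$ which need not be aligned with the $d_{1,\alpha}$. Controlling the mismatched cross-terms -- perhaps via a clean inclusion of the shape $({\bf m}_1\#{\bf m}_2)^{Nq_0}\cdot (I_1^{[q]}\#I_2^{[q]})\subseteq (I_1\#I_2)^{[q]}$ valid uniformly in $q$ -- is the delicate combinatorial part. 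Here the hypothesis $\alpha_i = c_i$ (rather than merely the density-level statement $\int f_{R_i,I_i}(x)\,dx = e_{HK}(R_i,I_i)$) enters essentially, since it provides the sharp threshold $(C+\epsilon)q$ that makes the bookkeeping go through.
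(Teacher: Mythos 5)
Your plan has exactly the skeleton of the paper's proof: (i) show $\alpha(R_1\#R_2,I_1\#I_2)=\max_i\alpha_i$ from the multiplicative formula; (ii) get $\alpha(R_1\#\cdots,I_1\#\cdots)\le c^{I_1\#\cdots}({\bf m}_1\#\cdots)$ from Proposition~\ref{p2}; (iii) show $c^{I_1\#\cdots}({\bf m}_1\#\cdots)\le\max_i c^{I_i}({\bf m}_i)$; and (iv) close the circle using the hypothesis $\alpha_i=c_i$. Your treatment of (i) is correct and is essentially the paper's (the inequalities $0\le f_{R_i,I_i}\le F_{R_i}$ with $F_{R_i}>0$ for $x>0$, plus continuity, are exactly what the paper uses), and your use of Proposition~\ref{p2} for (ii) is the same.

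The genuine gap is in (iii), and you have in fact flagged it yourself as the ``main obstacle.'' In the paper this is dispatched in one line: ``it follows from the definition that $c^{I\#J}({\bf m}\#{\bf n})\le\max\{c^I({\bf m}),c^J({\bf n})\}$,'' that is, the authors regard the ideal inclusion $({\bf m}\#{\bf n})^{r+1}\subseteq(I\#J)^{[q]}$ for $r\geq\max(\mu_{q,1},\mu_{q,2})+O(1)$ as immediate from the definition of $F$-thresholds and the Frobenius power of the Segre-product ideal, without any density-function input. Your proposed route is to grind out the inclusion of $(R_1\#R_2)_m$ into $(I_1\#I_2)^{[q]}$ by writing elements of $(I_1^{[q]})_m$ and $(I_2^{[q]})_m$ in terms of generators of mismatched degrees, which is precisely where your argument stalls; you have not produced the ``clean inclusion'' you hope for, and as written the step is not closed. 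Note also that your remark that the hypothesis $\alpha_i=c_i$ ``enters essentially'' into this bound is misplaced: in the paper the bound $c^{I\#J}\le\max_i c_i$ is used as an unconditional fact about $F$-thresholds of Segre products, and the hypothesis $\alpha_i=c_i$ is invoked only afterward, in the final chain $c^{\#}\le\max_ic_i=\max_i\alpha_i=\alpha(\#)\le c^{\#}$. So the proposal is correct in its architecture but is incomplete at exactly the one step the paper compresses to a single sentence; to finish along the paper's lines you should cite (or supply a short argument for) the unconditional inequality $c^{I\#J}({\bf m}\#{\bf n})\le\max\{c^I({\bf m}),c^J({\bf n})\}$ directly from Definition~\ref{d2}, rather than attempting to rederive it from the density-function side.
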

\begin{proof}Let $(R, I)$ and 
$(S, J)$  be  two standard graded pairs of dimensions $d_1$ and  $d_2$, 
respectively, over a  field $k$ of characteristic~$p >0$.
Then, by Proposition~2.17 of [T2]
$$f_{R\#S, I\#J} = (F_R)(f_{S, J}) + (F_S)(f_{R, I}) - 
(f_{R,I})(f_{S, J}),$$
where $F_R(x) = e(R)x^{d_1-1}/(d_1-1)!$ and 
$F_S(x) = e(S)x^{d_2-1}/(d_2-1)!$.

\vspace{5pt}

\noindent{\bf Claim}.\quad $\alpha(R\# S, I\# J) = 
\mbox{max}\{\alpha(R, I), \alpha(S, J)\}$.

\noindent{\underline{Proof of the claim}}:\quad Let $\alpha_1 = \alpha(R,I)$ and
$\alpha_2 = \alpha(S,J)$. Without loss of generality, one can assume that
 $\alpha_1\leq \alpha_2$.
It is enough to show $\alpha(R\# S, I\# J) \geq \alpha_2$. 
For any $\epsilon_1 >0$, there exists $0<\epsilon < \epsilon_1$ 
such that for 
$x\in [\alpha_2-\epsilon,~~\alpha_2)$, $f_{S,J}(x)\neq 0$ and
hence
$$f_{R\# S, I\# J}(x) = f_{R,I}(x)(F_S(x)-f_{S,J}(x)) + F_R(x)f_{S,J}(x) >0.$$
 
\vspace{5pt}

 Moreover, it follows from the definition that 
$c^{I\#J}({{\bf m}\#{\bf n}}) \leq \mbox{max}\{c^{I}({\bf m}), c^{J}({\bf n})\}$.
Hence, for $(R_i, {\bf m}_i)$ and $I_i$ as given above, we have
$$\begin{array}{lcl}
c^{I_1\#\cdots\#I_r}({{\bf m}_1\#\cdots\#{\bf m}_r}) & 
\leq & \max\{c^{I_i}({\bf m}_i)\mid
1\leq i\leq r\}\\\  
=  \max\{\alpha(R_i,I_i)\mid
1\leq i\leq r\} & = & \alpha(R_1\#\cdots\#R_r, I_1\#\cdots I_r).\end{array}$$
On the other hand, by Proposition~\ref{p2}, we have 
$$\alpha(R_1\#\cdots\#R_r, I_1\#\cdots \# I_r)\leq 
c^{I_1\#\cdots\#I_r}({{\bf m}_1\#\cdots\#{\bf m}_r}).$$
This proves the lemma. \end{proof}

We can summarize the results of this subsection in 
Theorem~E (stated in the introduction)

\vspace{5pt}
\noindent{\underline{\em Proof of Theorem~E}}.\quad It is a consequence of 
Proposition~\ref{p2}, Proposition~\ref{p3}, Theorem~\ref{p5},
 Theorem~\ref{c11} and
Lemma~\ref{l5}.$\Box$

\subsection{The shape of the graph of  $f_{R, {\bf m}}$  and regularity}

\begin{thm}\label{c1}Let $R$ be a standard graded ring of dimension 
$d\geq 2$ defined over a perfect field of characteristic $p>0$. 

Then 
$\alpha(R, {\bf m}) \leq d$.
Moreover, the equality holds
if and only if there exists a prime ideal $P$, such that 
$\dim~R/P = d$ and 
 $R/P$ is regular.

 In particular,  if $R$ is a 
domain then $\alpha(R, {\bf m}) = d$ if and only if $R$ is regular. 
\end{thm}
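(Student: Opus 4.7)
The plan is to reduce to the case where $R$ is a standard graded domain by additivity, to obtain the upper bound via a linear system of parameters, and to handle the equivalence using Noether normalization together with an integral-dependence argument for the harder direction.

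By the additive property recalled in Section~2,
\[
f_{R,{\bf m}} \;=\; \sum_{P\in\Lambda}\lambda(R_P)\,f_{R/P,{\bf m}(R/P)},\qquad \Lambda=\{P\in\mathrm{Min}(R):\dim R/P=d\},
\]
so $\alpha(R,{\bf m})=\max_{P\in\Lambda}\alpha(R/P,{\bf m}(R/P))$, reducing both statements to the case of a standard graded domain of dimension $d$. After a scalar extension to an infinite perfect field (which leaves the density function and regularity unchanged), Noether normalization yields a linear system of parameters $y_1,\ldots,y_d$, and $J=(y_1,\ldots,y_d)\subseteq{\bf m}$ satisfies $J^{[q]}\subseteq{\bf m}^{[q]}$, so $f_{R,{\bf m}}\le f_{R,J}$ pointwise. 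By Lemma~\ref{l1}, $f_{R,J}(x)=e(R)\,V_{d-1}(1,\ldots,1)(x)$ is supported on $[0,d]$, giving $\alpha(R,{\bf m})\le d$. The easy direction of the equivalence follows from Theorem~\ref{c11}: if $R$ is regular then $R\cong k[x_1,\ldots,x_d]$ as a standard graded ring, so ${\bf m}$ is itself generated by a linear system of parameters and $\alpha(R,{\bf m})=d$.

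For the converse in the domain case I argue the contrapositive. Since $R$ is finite and torsion-free of rank $e(R)$ over the normal ring $S=k[y_1,\ldots,y_d]$, the conditions $R=S$, $e(R)=1$, and $R$ being regular are all equivalent. Hence non-regularity produces a minimal generator $z\in R_1\setminus\sum k y_i$ of ${\bf m}$ satisfying an integral equation
\[
z^m + a_1 z^{m-1}+\cdots+a_m = 0,\qquad a_i\in S_i\subseteq JS,\quad m\ge 2,
\]
so $z^m\in JR$ and $z^q\in J^{\lceil q/m\rceil}R$ for every $q$, forcing $z^q$ to sit deeper in $J$ than the naive Frobenius count predicts.

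The main obstacle is to convert this single integral relation into a uniform, linear-in-$q$ upper bound on the top nonzero degree of $R/{\bf m}^{[q]}$: a constant $\delta=\delta(R)>0$ with $(R/{\bf m}^{[q]})_m=0$ whenever $m\ge(d-\delta)q+O(1)$. Such a bound gives $c^{\bf m}({\bf m})\le d-\delta$, and Proposition~\ref{p2} then forces $\alpha(R,{\bf m})\le c^{\bf m}({\bf m})<d$, contradicting the assumption. I plan to produce this bound by first reducing to the Cohen--Macaulay case, where $R$ is a free graded $S$-module on homogeneous generators $u_1=1,\ldots,u_{e(R)}$ of degrees $0\le a_2\le\cdots\le a_{e(R)}$ and $R/J^{[q]}$ decomposes correspondingly as a sum of shifted copies of the Artinian Gorenstein ring $S/(y_1^q,\ldots,y_d^q)$; one then tracks how the extra Frobenius powers $z_j^q$ of the remaining degree-one generators of ${\bf m}$ truncate the socle of $R/J^{[q]}$ by at least a positive linear fraction of $q$, using the integral equations satisfied by the $z_j$'s.
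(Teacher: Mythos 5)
Your reduction via additivity, the upper bound $\alpha(R,{\bf m})\le d$ via a linear system of parameters, and the easy implication (regularity $\Rightarrow$ $\alpha=d$) all match the paper's argument and are fine. The problem is the converse in the domain case, which is where you take a genuinely different route and where a genuine gap remains.

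You correctly extract, from non-regularity, a linear form $z\in R_1$ with an integral equation over $S=k[y_1,\ldots,y_d]$, giving $z^m\in JR$ and hence $z^q\in J^{\lfloor q/m\rfloor}R$. But you then explicitly acknowledge that turning this into a bound of the shape $(R/{\bf m}^{[q]})_m=0$ for $m\ge (d-\delta)q+O(1)$ with a fixed $\delta>0$ is ``the main obstacle,'' and what follows is a plan (``I plan to produce this bound\dots one then tracks\dots'') rather than an argument. Two concrete issues remain. First, the reduction to the Cohen--Macaulay case is not justified; the standard reduction to the normalization does not produce a Cohen--Macaulay ring once $d\ge 3$, and no other device is offered. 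Second, even in the free case the passage from $z^m\in JR$ to a \emph{linear-in-$q$} drop in the top degree of $R/{\bf m}^{[q]}$ is not automatic: $z^q\in J^{\lfloor q/m\rfloor}R$ controls how $z^q$ sits inside $J$-powers, but does not by itself show that the ideal $J^{[q]}+ (z^q)$ kills the graded pieces of $R/J^{[q]}$ in a band of width $\Theta(q)$ below the socle degree $d(q-1)+a_e$; it is entirely consistent with what you have written that the truncation is only $O(1)$.

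The paper sidesteps exactly this difficulty via tight closure. It takes $J=(x_1,\ldots,x_d)$ a linear system of parameters which is part of a minimal generating set of ${\bf m}$, and invokes Theorem~2.2 of [S] to get $J^\ast\cap R_1=J\cap R_1\subsetneq {\bf m}_1$, i.e. ${\bf m}\not\subseteq J^\ast$. Corollary~3.2 of [HMTW] then gives $c^{{\bf m}}(J)<d$, and Proposition~1.7 of [MTW], using ${\bf m}\subseteq\overline{J}$, gives $c^{{\bf m}}(J)=c^{{\bf m}}({\bf m})$; together with Proposition~\ref{p2} this forces $\alpha(R,{\bf m})<d$. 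The Smith/HMTW machinery is exactly what converts a single integral relation (equivalently, ${\bf m}\ne J$ in degree one) into the uniform linear-in-$q$ bound that your sketch leaves open. If you want a self-contained argument along your lines, you would need to prove an analogue of Corollary~3.2 of [HMTW] from scratch, which is substantially more work than what you have outlined.
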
 
\begin{proof}(1)\quad Let $J$ be a parameter ideal 
generated by elements of degree $1$. By Lemma~\ref{l1}, $\alpha(R, J) = d$. 
Hence  $\alpha(R, {\bf m}) \leq 
\alpha(R, J) \leq d$. 

\vspace{5pt}

\noindent~(2)\quad Suppose  $R/P$ is  regular for some $P\in \mbox{Assh}(R)$. 
Then, by Corollary~\ref{p1}, 
we have  $\alpha(R/P, {\bf m}/P) = d$.
Now the additivity of the HK density function implies that  
  $d\geq \alpha(R, {\bf m}) = \mbox{max}
\{\alpha(R/P, {\bf m}/P)\mid P\in \mbox{Assh}(R)\}\geq d$. 

Conversely, suppose $\alpha(R, {\bf m}) = d$. Let
 $P\in \mbox{Assh}(R)$ such that  $\alpha(R/P, {\bf m}/P) = d$.
Suppose $R/P$ is not regular. We choose a system of parameter 
ideal $J = (x_1, \ldots, x_d)$ of
linear forms such that $\{x_1, \ldots, x_d\}$ is a part of a minimal set of 
generators of ${\bf m}/P$. 
Then 
$$(J^\ast)\cap (R/P)_1 = J\cap (R/P)_1 \neq ({\bf m}/P)\cap (R/P)_1,$$
where the first equality follows from  Theorem~{2.2} of [S].

Hence, by Corollary~3.2 of [HMTW], we have $c^{{\bf m}/P}(J) < d$. 
On the other hand, by  Proposition~1.7 of [MTW], 
$c^{{\bf m}/P}(J) = c^{{\bf m}/P}({\bf m}/P)$, as
${\bf m}/P =$ the integral closure of $J$ in $R/P$.

But then 
 $\alpha(R/P, {\bf m}/P)\leq c^{{\bf m}/P}({\bf m}/P) < d$, 
which is a contradiction.

\vspace{5pt}

\noindent~(3)\quad Assertion~(3) is a particular case of  Assertion~(2).
\end{proof}

\begin{thm}\label{shk}Let $R$ be a standard graded 
domain of dimension $d\geq 2$. Then the function $f_{R, {\bf m}}$ is 
symmetric at $ x= d/2$, {\em i.e.}, 
$$f_{R, {\bf m}}(x) = f_{R, {\bf m}}(d-x),\quad\mbox{for all}~~x~~
\mbox{iff}\quad R~~~\mbox{is a regular ring}.$$

Moreover, if $\dim~R =2$ then 
either \begin{enumerate}
\item $f_{R, {\bf m}}$ is symmetric at $x=1$, or 
\item $f_{R, {\bf m}}(1-y) > f_{R, {\bf m}}(1+y)$, for all $y\in (0, 1)$. 
\end{enumerate}
\end{thm}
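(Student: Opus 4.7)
First, for the biconditional in general dimension $d \geq 2$, I would reduce it to Theorem~\ref{c1}. If $R$ is regular then, being a standard graded regular domain, $R$ is a polynomial ring $k[X_1, \ldots, X_d]$; thus ${\bf m}$ is a parameter ideal generated by linear forms, and Corollary~\ref{p1} yields $f_{R, {\bf m}}(x) = f_{R, {\bf m}}(d - x)$. Conversely, if this symmetry holds, Remark~\ref{r1} gives $f_{R, {\bf m}}(x) = e(R)x > 0$ on $(0, 1]$; symmetry transfers this positivity to $f_{R, {\bf m}}(d - x) > 0$ on $(0, 1]$, so $\alpha(R, {\bf m}) \geq d$. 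Combined with the upper bound $\alpha(R, {\bf m}) \leq d$ from Theorem~\ref{c1}(1), this forces $\alpha(R, {\bf m}) = d$, and Theorem~\ref{c1}(3) then forces $R$ to be regular.

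For the dimension-two dichotomy, the plan is to introduce
\[
\psi(y) := f_{R, {\bf m}}(1 - y) - f_{R, {\bf m}}(1 + y) = e(R)(1 - y) - f_{R, {\bf m}}(1 + y), \qquad y \in [0, 1],
\]
where the second equality uses $f_{R, {\bf m}}(1-y) = e(R)(1-y)$ from Remark~\ref{r1}, and to show three things: (a) $\psi \geq 0$, (b) $\psi(0) = \psi(1) = 0$, and (c) $\psi$ is concave on $[0, 1]$. Property (a) comes from choosing a reduction $J \subseteq {\bf m}$ generated by two general linear forms and applying Lemma~\ref{l1}, which yields $f_{R, {\bf m}}(x) \leq f_{R, J}(x) = e(R)\min(x, 2 - x)$. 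Property (b) is immediate at $y = 0$ and, at $y = 1$, uses $\alpha(R, {\bf m}) \leq 2$ from Theorem~\ref{c1}(1).

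Property (c), the concavity of $\psi$, is the main technical obstacle, and I would derive it from the syzygy-bundle machinery developed in Section~6. After reducing to the normal case by additivity (so that $X = \mbox{Proj}~R$ is a smooth projective curve), the syzygy bundle $V$ sitting in $0 \to V \to \bigoplus^{\mu} \sO_X \to \sO_X(1) \to 0$ has strong Harder--Narasimhan data $(a_i, r_i)_i$, and the Section~6 formalism combined with Riemann--Roch yields
\[
f_{R, {\bf m}}(1 + y) = \sum_i r_i \max\bigl(0,\, -a_i - y\, e(R)\bigr),
\]
a sum of affine ``hockey-stick'' functions in $y$, hence convex; so $\psi$ is concave.

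Once (a), (b), (c) are in hand, a standard propagation argument finishes the dichotomy: if $\psi(y_0) = 0$ for some $y_0 \in (0, 1)$, then for every $u \in [0, y_0)$ and $v \in (y_0, 1]$, the concavity inequality $\psi(y_0) \geq \lambda \psi(u) + (1-\lambda) \psi(v)$ together with $\psi \geq 0$ forces $\psi(u) = \psi(v) = 0$; thus $\psi \equiv 0$ on $[0,1]$. In that case $f_{R, {\bf m}}$ is symmetric at $x = 1$ and the first assertion then forces $R$ regular, giving case (1). Otherwise $\psi > 0$ on $(0, 1)$, giving case (2). The crux is the convexity input from Section~6; the rest is routine.
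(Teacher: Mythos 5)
Your Part~1 argument matches the paper's: the ``if'' direction via Corollary~\ref{p1}, the ``only if'' direction by forcing $\alpha(R,{\bf m})=d$ and invoking Theorem~\ref{c1}. The only small slip is cosmetic: in dimension $d$, Remark~\ref{r1} gives $f_{R,{\bf m}}(x)=e(R)x^{d-1}/(d-1)!$ on $[0,1]$, not $e(R)x$; all you need is positivity on $(0,1]$, which holds either way.

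For the dimension-two dichotomy your route is genuinely different from, and somewhat slicker than, the paper's. The paper assumes $R$ nonregular, writes down the strong HN data $(a_i,r_i)$ of the syzygy bundle $V$ in $0\to V\to\oplus^s\sO_X\to\sO_X(1)\to 0$, supposes $f_{R,{\bf m}}(1-y_0)\le f_{R,{\bf m}}(1+y_0)$ for some $y_0\in(0,1)$, and runs a careful algebraic case analysis that pins down $a_1=0$, $r_2=1$; it then produces a nonzero section of $F^{m_1*}V$ from a trivial summand, contradicting the injectivity of $\oplus^s H^0(X,\sO_X)\to H^0(X,\sO_X(q))$ that follows from the minimality and linear independence of $h_1,\dots,h_s$. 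Your argument instead uses the same Section~6 formula to observe that $f_{R,{\bf m}}(1+y)=\sum_k r_k\max(0,-a_k-dy)$ is convex in $y$, hence $\psi(y)=e(R)(1-y)-f_{R,{\bf m}}(1+y)$ is concave on $[0,1]$ with $\psi(0)=\psi(1)=0$; a single concavity propagation then yields the dichotomy $\psi\equiv0$ or $\psi>0$ on $(0,1)$ with no case analysis and no appeal to $h^0(F^{m_1*}V)=0$. Two remarks: (i) your separate step~(a) ($\psi\ge0$ via a linear parameter ideal) is actually redundant, since concavity plus $\psi(0)=\psi(1)=0$ already forces $\psi\ge0$ on $[0,1]$; and (ii) your closing appeal to Part~1 (``forces $R$ regular'') is unnecessary for the dichotomy as stated, though harmless. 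What the paper's argument buys in exchange for its length is a direct proof that nonregularity implies strict inequality, without routing through the case-1-implies-regular implication; what yours buys is brevity and a transparent structural reason (piecewise-linear convexity of HKd functions of bundles) for the dichotomy.
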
 
\begin{proof}
If $R$ is regular then ${\bf m}$ is generated by a system of parameters of degree $1$ 
and therefore, by Corollary~\ref{p1}, the function  $f_{R, {\bf m}}$ is 
symmetric at $d/2$.

Conversely, if $f_{R, {\bf m}}$ is symmetric at $d/2$ then 
$\alpha(R, {\bf m}) = d$, hence, by Lemma~\ref{c1}, the ring $R$ is regular.

We give the proof of second assertion in the end of subsection~6.2

\end{proof}

In Section~6 we  study, in more detail, the invariant
$\alpha(R, I)$  for two dimensional domains. 
Here  we also explicitly express the HK density function $f_{R, I}$ in 
terms of the {\em strong Harder-Narasimhan slopes} of the associated  
syzygy bundles.  We recall some well known relevant results about vector bundles.

\section{Some Basic facts about semistability properties of vector bundles }
Let $X$ be a nonsingular projective curve over an algebraically  closed field $k$.
\begin{defn}\label{d5}A vector bundle $V$ on $X$ is 
{\em semistable} if for every proper subbundle $W\subset V$ we have 
$\mu(W)\leq \mu(V)$, where $\mu(V) = \deg(V)/\rank(V)$.

The bundle $V$ is {\em strongly semistable} (provided $\Char~k = p>0$) if, 
for every $m^{th}$ iterated Frobenius map 
$F^m:X\longto X$, the bundle $F^{m*}(V)$ is semistable, which is equivalent to 
the statement that $\pi^*V$ is semistable for every finite map $\pi:Y\longto X$
(because the pull back of a semistable bundle is semistable for any finite 
separable map).

Note that when  $\Char~k = 0$, then  $V$  semistable implies 
$\pi^*V$ is semistable for every finite map $\pi:Y\longto X$ (see [HL]).
Hence the notions of strongly semistability  and semistability coincide 
in $\Char~k = 0$.
\end{defn}

\begin{defn}\label{d3}Every vector bundle $V$ on  $X$ has the unique 
filtration of subbundles 
\begin{equation}\label{hn} 0 = F_0 \subset F_1 \subset \cdots \subset 
F_t \subset F_{t+1}= V,\end{equation}
called  {\it the Harder-Narasimhan (HN) filtration}  of $V$,
satisfying  the following conditions
\begin{enumerate}
\item for every $i$, the bundle $F_i/F_{i-1}$ is semistable and 
\item  
$\mu(F_1) > \mu(F_2/F_1)>\ldots >\mu(F_{t+1}/F_{t})$ 
($\equiv \mu(F_1) > \mu(F_2)>\ldots >\mu(V)$) .
\end{enumerate}

If in addition the bundles $F_i/F_{i-1}$ is 
strongly semistable, for every $i$ then 
 we call the filtration (\ref{hn}) {\em a strong
 Harder-Narasimhan filtration} (or a strong HN filtration). 
\end{defn}

\begin{notations}\label{hnd}Let $V$ be a vector bundle on a nonsingular 
projective curve $X$.
\begin{enumerate}
\item  If $V$ has the HN filtration as 
in (\ref{hn})
then we  define $\mu_i = \mu({F_i}/{F_{i-1}})$, the {\em HN slopes} of $V$ and 
$r_i = \rank({F_i}/{F_{i-1}})$, the HN
 ranks of $V$. We  
denote the minimum HN slope $\mu_{min}(V) = \mu({V}/{F_t})$.
We call the set 
$$(\{\mu_1, \mu_2, \cdots, \mu_{t+1}\}, \{r_1, 
\ldots, r_{t+1}\})$$ 
the {\em HN data} for  $V$.
\item By Theorem~2.7 of [L], if $\Char~k = p  >0$, then 
for a given bundle $V$, there exists $m>0$ 
such that 
the HN filtration of  $F^{m*}V$ is the strong HN filtration 
$$ 0 = E_0 \subset E_1 \subset \cdots \subset 
E_l \subset E_{l+1}= F^{m*}V.$$
In this case we define $a_i = (1/p^m)\mu({E_i}/{E_{i-1}})$ the {\em strong HN slopes} 
of $V$. We  denote the {\em minimum strong HN slope} and the  {\em strong HN data}
of $V$ by 
$$a_{min}(V) = (1/p^m)\mu({E_{l+1}}/{E_l})\quad\mbox{and}\quad 
\left(\{a_1, \ldots, a_{l+1}\},
 \{{\tilde r}_1, \ldots, {\tilde r}_{l+1}\}\right),$$
respectively, where ${\tilde r}_i = \rank(E_{i+1}/E_{i}).$
 
The notion of the strong HN slopes and the strong HN data
are well defined because
if $V$ has a strong HN filtration, then for every $n\geq 1$, 
the $n^{th}$ Frobenius 
pull back of the HN filtration of $V$ is the strong HN filtration for  $F^{n*}V$.
\end{enumerate}
\end{notations}

\begin{rmk}\label{r5}Let $\sO_X(1)$ be an ample line bundle of degree $d$ on 
$X$. Let ${\tilde E}$ be a semistable vector-bundle on $X$ with 
$\mu({\tilde E}) = \mu$ and $\rank({\tilde E}) = r$.
  Then by Serre duality 
$$\begin{array}{lcl}
 m < -{\mu}/{d} & \implies & h^1(X, {\tilde E}(m))
= -r(\mu+dm+(g-1))\\
-{\mu}/{d} \leq m \leq - {\mu}/{d} + (d-3) & \implies &
h^1(X, {\tilde E}(m)) =  C\\
-{\mu}/{d} + (d-3) < m & \implies &
h^1(X, {\tilde E}(m)) =  0,
\end{array}$$
where $|C| \leq r(g-1)$ and $g = \mbox{genus}(X)$.
\end{rmk}

\begin{rmk}\label{r4}Let $V$ be a vector bundle on $X$ with the
 HN filtration (\ref{hn}). 
\begin{enumerate}
\item If $V$ has a nontrivial HN filtration  ({\it i.e.} $V$ is not 
semistable) then $  \mu_{min}(V) < \mu(V)$.

 \item If $0\longto V'\longto V\longto V''\longto 0$ is a short exact 
sequence of nonzero 
vector bundles on $X$, then  
\begin{enumerate}
\item either $\mu(V') \leq \mu(V) \leq \mu(V'')$ 
or $\mu(V') \geq \mu(V) \geq \mu(V'')$. 
\item For a nonzero  map of bundles  $E\longto W$, 
 where $W$ is semistable, 
 $\mu_{min}(E) \leq \mu(W)$: We choose a semistable subquotient bundle $E_1$ 
(occuring in the HN filtration of $E$)  such that the restricted map $E_1
\longto W$ is nonzero. This gives $\mu_{min}(E) \leq \mu(E_1) \leq 
\mu(\mbox{Im}(E_1))\leq \mu(W)$.
\item $a_{min}(V) \leq \mu_{min}(V)$: Enough to prove $\mu_{min}(F^{m*}(V))
\leq p^m\mu_{min}(V)$, for any $m\geq 1$.
Let $V_1$ be the semistable quotient bundle of $V$ such that $\mu_{min}(V) = 
\mu(V_1)$. Let $W$ be the semistable quotient bundle of $F^{m*}(V_1)$ 
such that $\mu_{min}(F^{m*}V_1) = 
\mu(W)$.
Now the nonzero map $F^{m*}V\longto W$ gives
$$\mu_{min}(F^{m*}V) \leq \mu(W)
\leq \mu(F^{m*}(V_1)) = p^m\mu(V_1) = p^m\mu_{min}(V).$$
\end{enumerate}
\end{enumerate}
\end{rmk}

\section{The HK density functions for two dimensional rings}

\subsection{The HK density functions for vector bundles on curves}

Let $X$ be a nonsingular projective curve  over an 
algebraically closed  field of characteristic $p>0$. Let $\sO_X(1)$ be an
ample line bundle of degree $d$ on $X$.
Let $V$ be a  vector bundle on $X$. For the notion of HN data, (strong) 
HN slopes and the minimum strong HN slope $a_{min}(V)$, 
for a vector bundle $V$ on $X$, 
we refer to Notations~\ref{hnd}.

We define the HK density function of $V$ (a continuous but not necessarily 
compactly supported function) with respect to $\sO_X(1)$
(where $q=p^n$) 
\begin{equation}\label{fv}
f_{V, \sO_X(1)}:\R\longto [0, \infty)~~~\mbox{given by}~~~ 
x \to  \lim_{n\to \infty}\frac{1}{q}h^1(X, F^{n*}V(\lfloor (x-1)q\rfloor)).
\end{equation} 
This function is 
 well defined and is  given 
explicitly as follows. Let the strong HN data for $V$ be given by 
$(\{a_1, \ldots, a_{l+1}\}, \{r_1, \ldots, r_{l+1}\})$. 
Then there is $m_1$ such that 
$F^{m_1*}V$ has the strong HN filtration
$$0 = E_{0} \subset E_{1} \subset \cdots \subset E_{l} \subset E_{l+1} =
F^{m_1*}V, $$
where $a_i = (1/p^{m_1})\mu(E_i/E_{i-1})$ and 
$r_i  = \rank(E_i/E_{i-1})$. 

Since $a_1 > a_2 > \cdots > a_{l+1}$, we can choose $q>>0$ such that 
\begin{equation}\label{eem2}-\frac{ a_1q}{d} < 
-\frac{ a_1q}{d}+(d-3) < -\frac{a_2q}{d} < 
-\frac{ a_2q}{d}+(d-3) <\cdots < 
-\frac{ a_{l+1}q}{d}.\end{equation}

By Remark~\ref{r5}   
$$h^1(X, F^{n+m_1*}V(\lfloor (x-1)q\rfloor)) = 
\sum_{i=1}^{l+1}h^1(X, F^{n*}(E_i/E_{i-1})(\lfloor (x-1)q\rfloor)),$$
where $q=p^n$. Taking limit as $n\to \infty$, we get
$$\begin{array}{lcl}
 x < 1-a_1/d & \implies & f_{V,\sO_X(1)}(x) =
-\left[\sum_{i=1}^{l+1}a_ir_i+d(x-1)r_i\right] \\
1-a_i/d  \leq x < 1-a_{i+1}/d  & \implies & f_{V,\sO_X(1)}(x) =
- \left[\sum_{k={i+1}}^{l+1}a_kr_k+d(x-1)r_k\right]\\
1-a_l/d  \leq x < 1-a_{l+1}/d  & \implies & f_{V,\sO_X(1)}(x) =
- \left[a_{l+1}r_{l+1}+d(x-1)r_{l+1}\right]\\
1-a_{l+1}/d  \leq x   & \implies & f_{V,\sO_X(1)}(x) = 0.\end{array}$$
This implies  
$\mbox{Support}~f_{V,\sO_X(1)} \subseteq (-\infty, 1-a_{min}(V)/d]$
and $$\alpha(V, \sO_X(1)) := \mbox{Sup}~\{x\mid f_{V, \sO_X(1)}(x) >0\}
 = 1-\frac{a_{min}(V)}{d}.$$

\begin{rmk}Replacing $R$ by $R\tensor_k{\bar k}$ does not change the function
$f_{R,I}$ and the semistability behaviour of any vector bundle $V$ on 
$X= \mbox{Proj}~R$. Therefore   we can assume, without loss of generality, that the 
underlying field $k$ is algebraically closed.\end{rmk}

\subsection{The HK density functions of $(R,I)$ and its syzygy bundles} 

\begin{notations}\label{n3} Let $(R,I)$ be  a  standard graded pair, where $R$ is a domain defined over 
a perfect field. Let $h_1, \ldots, h_\mu$ be a set 
of generators of $I$ with $\deg~h_i = d_i$.

Let $S = \oplus_{m\geq 0} S_m$ be the integral closure of $R$ in its quotient field.
Since $R$ is a standard graded ring over $k$, the canonical embedding 
 $Y = {\rm Proj}~R \longto \P_k^n$ 
gives the very ample line bundle $\sO_Y(1)$ on  $Y$.
Let  $X = {\rm Proj}~S$ and let $\sO_X(1)$ be  the pull back of 
$\sO_Y(1)$ under the map $X\longto Y$. 
Let \begin{equation}\label{e2}
0\longto V \longto M = \oplus_{i=1}^{\mu}\sO_X(1-d_i)\longto \sO_X(1)
\longto 0,\end{equation}
be the canonical (locally split) exact
 sequence of locally free sheaves of $\sO_X$-modules,
where the map 
$\sO_X(1-d_i)\longto \sO_X(1)$ is given by the multiplication by the
the element $h_i$.

\end{notations}

Note that, in the case $I$ generated by degree $1$ elements,  the HK density 
function (and hence the maximum support 
of $f_{R, I}$) has been explicitly given in [T2].

\begin{thm}\label{n1}With the above notations, we have
\begin{enumerate}
\item $f_{R, I}(x) = 
f_{V, \sO_X(1)}(x)-f_{M, \sO_X(1)}(x), ~~\mbox{for}~~x\geq 0$ and
\item $\alpha(R, I) = 1- a_{i}(V)/d$ or $1- a_i(M)/d$, where $a_i(V)$
($a_i(M)$) is one of the  strong HN slopes of $V$ (respectively $M$) and 
hence $\alpha(R, I)$ is a rational number. Moreover
\item if $d_1 = \cdots = d_\mu$ then 
$ \alpha(R, I) = 1- a_{min}(V)/d$.
\end{enumerate}
\end{thm}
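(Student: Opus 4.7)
The plan is to transport the computation of $\ell(R/I^{[q]})_{\lfloor xq\rfloor}$ to a cohomological computation on $X$ via the short exact sequence (\ref{e2}), and then apply the explicit formula for $f_{V,\sO_X(1)}$ derived in the preceding subsection in terms of strong HN data. By the additivity property recalled in Section~2, $f_{R,I} = f_{S,IS}$, so it suffices to work with the normalization $S$. For part~(1), apply $F^{n\ast}$ to (\ref{e2}) and twist by $\sO_X(m)$ with $m = \lfloor (x-1)q\rfloor$, so that $m+q = \lfloor xq\rfloor$. The long exact cohomology sequence gives
\[
0 \to \coker(\phi_q) \to H^1(X,F^{n\ast}V(m)) \to H^1(X,F^{n\ast}M(m)) \to H^1(X,\sO_X(m+q)) \to 0,
\]
where $\phi_q \colon \oplus_i H^0(X,\sO_X(m+q-qd_i)) \to H^0(X,\sO_X(m+q))$ is $(a_i)\mapsto \sum h_i^q a_i$. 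For $x>0$ and $q\gg 0$, Serre vanishing kills $h^1(X,\sO_X(m+q))$, and (since $S$ is normal) the relevant $H^0$-groups coincide with the graded pieces of $S$ up to $O(1)$ error; in particular the image of $\phi_q$ equals $(I^{[q]}S)_{m+q}$ up to $O(1)$, so $\coker(\phi_q) = (S/I^{[q]}S)_{m+q} + O(1)$. Dividing by $q^{d-1}=q$ and letting $n\to\infty$ gives (1); the bounded errors and the difference $\ell(R/I^{[q]})_{\lfloor xq\rfloor}-\ell(S/I^{[q]}S)_{\lfloor xq\rfloor}$ (also $O(1)$, since $S/R$ has dimension $\leq 1$) both vanish in the limit.

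For part~(2), the explicit formulas at the end of the preceding subsection show that $f_{V,\sO_X(1)}$ and $f_{M,\sO_X(1)}$ are continuous piecewise linear functions whose breakpoints lie at $x = 1 - a/d$ for $a$ a strong HN slope of the respective bundle, and whose maximum supports are $1-a_{\min}(V)/d$ and $1-a_{\min}(M)/d$. Their difference $f_{R,I}$ is therefore piecewise linear with breakpoints contained in the union of these two finite sets of rationals, and eventually vanishes. Since $f_{R,I}\geq 0$ and continuous, $\alpha(R,I)$ must coincide with one of these breakpoints; all HN slopes on a curve are rational, so $\alpha(R,I)\in\Q$ and has the prescribed form.

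For part~(3), when $d_1=\cdots=d_\mu=d_0$, the bundle $M=\oplus^\mu \sO_X(1-d_0)$ is a sum of copies of one line bundle, hence strongly semistable of slope $d(1-d_0)$, with $\alpha(M,\sO_X(1)) = d_0$. From (\ref{e2}) one computes $\mu(V) = d((1-d_0)\mu-1)/(\mu-1)$; combining this with Remark~\ref{r4}(ii)(c) and $\mu_{\min}(V)\leq \mu(V)$ gives $1 - a_{\min}(V)/d \geq d_0\mu/(\mu-1) > d_0$. Thus on the interval $(d_0,\,1-a_{\min}(V)/d)$ we have $f_{M,\sO_X(1)}\equiv 0$ while $f_{V,\sO_X(1)}>0$ immediately to the left of $1-a_{\min}(V)/d$, so by (1) the same holds for $f_{R,I}$ there. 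For $x>1-a_{\min}(V)/d$, $f_{V,\sO_X(1)}$ vanishes and nonnegativity of $f_{R,I}$ forces it to vanish as well. Hence $\alpha(R,I) = 1-a_{\min}(V)/d$. The main technical obstacle is the careful bookkeeping in part~(1): making precise the $O(1)$ identifications between the image of $\phi_q$ and $(I^{[q]}S)_{m+q}$, and between the cohomology groups on $X$ and the length of a graded piece of $R/I^{[q]}$, particularly in the transition range where $m+q-qd_i$ is bounded.
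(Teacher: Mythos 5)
Your argument is correct and follows essentially the same route as the paper: reduce to the normalization $S$ by additivity, twist the syzygy sequence~(\ref{e2}) by $\sO_X(\lfloor (x-1)q\rfloor)$, pull back by $F^n$, read off $\ell(S/I^{[q]}S)_{\lfloor xq\rfloor}$ as a difference of $h^1$'s, and then extract (2) and (3) from the explicit piecewise-linear description of $f_{V,\sO_X(1)}$ and $f_{M,\sO_X(1)}$ in terms of strong HN data. Your treatment of the $O(1)$ discrepancies between $H^0(X,\sO_X(m+q-qd_i))$ and $S_{m+q-qd_i}$ in the low-degree range is in fact slightly more careful than the paper's, which simply asserts the identity for $m\geq m_0$; this is a small improvement in rigor rather than a different method.
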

\begin{proof}We note that
 the  inclusion map $\pi:R\longrightarrow S$  is 
a graded finite map of degree $0$, where $S$ is a normal domain and $Q(R) = Q(S)$.
The additivity of the HK density function implies that 
$$f_{R, I}(x) = f_{S, I}(x) = \lim_{n\to \infty} f_n(S, I)(x) = 
\lim_{n\to \infty}\frac{1}{q} \ell\left(\frac{S}{I^{[q]}S}\right)_{\lfloor 
xq\rfloor}.$$
Also $X$ is a nonsingular projective curve.

Let $m_0 >0$ be such that, for $m\geq m_0$, we have  
$H^1(X, \sO_X(m)) = 0$ and $H^0(X, \sO_X(m)) = S_m$.
Then, for $m\geq m_0$, $n\geq 0$ and $q = p^n$, 
the  long exact sequence of cohomologies
\begin{equation}\label{*}0\longto H^0(X, (F^{n*}V)(m))\longto
 H^0(X, (F^{n*}M)(m))\longby{\varphi_{m,q}}
H^0(X, \sO_X(q+m))\end{equation}
$$ \longto H^1(X, (F^{n*}V)(m))\longto
 H^1(X, (F^{n*}M)(m))\longto 0$$
gives
 $$f_n\left(\frac{m+q}{q}\right) = \frac{1}{q} 
\ell\left(\frac{S}{I^{[q]}S}\right)_{m+q} = 
 \frac{1}{q}\left[h^1(X, (F^{n*}V)(m)) -
 h^1(X, (F^{n*}M)(m))\right].$$
Hence
$f_{R, I}(x) = 
f_{V, \sO_X(1)}(x)-f_{M, \sO_X(1)}(x)$, which proves the assertion (1).

The second assertion follows from the description of the HK density function
of a vector bundle in terms of its  strong HN data.

If $d_1= \ldots = d_{\mu}$ then $M$ is a strongly semistable bundle (being 
a sum of line bundles of the same degrees). 
Now, by Remark~\ref{r4} (as $\mu(M) < \mu(\sL)$)
$$a_{min}(V) \leq \mu_{min}(V) \leq  \mu(V) < \mu(M) = a_{min}(M).$$
 Therefore $\alpha(R, I) = \alpha(V, \sO_X(1)) = 1-a_{min}(V)/d$. 
\end{proof}

Now we are ready to give a proof of 
 
\vspace{10pt}
\noindent{\bf Theorem~\ref{shk}~(2)}\quad{\em 
If $R$ is a standard graded domain of dimension $2$ then 
either \begin{enumerate}
\item $f_{R, {\bf m}}$ is symmetric at $x=1$, or 
\item $f_{R, {\bf m}}(1-y) > f_{R, {\bf m}}(1+y)$, for all $y\in (0, 1)$. 
\end{enumerate}}

\begin{proof}
If $R$ is a regular ring then by Assertion~(1), the function $f_{R, {\bf m}}$ is 
symmetric at $x=1$. 

Let $R$ be a nonregular domain of dimension $2$. 

We consider 
the sequence~(\ref{e2}) 
$$0\longto V \longto M = \oplus^{s}\sO_X \longto \sO_X(1)\longto 0,$$
for  the pair $(R,{\bf m})$, where 
  $h_1, \ldots, h_s$ is a minimal set 
of degree $1$ generators of ${\bf m}$. 

Since $R$ is not regular,  $s\geq \mu({\bf m}) \geq 3$ and therefore 
$\rank~V =r =  s-1 \geq 2$.
If $(\{a_1>\ldots >a_{l+1}\}$,$ 
\{r_1, \ldots, r_{l+1}\})$ denotes  the strong HN data for 
$V$ then  $l+1\geq 1$. Also the strong HN data for $M$ is 
$(\{\mu(M)= 0\}, \{\rank(M) = s\})$. 
Hence 
$$y \in [0,  -a_1/d) \implies   
f_{R, {\bf m}}(1+y) = d-rdy < d-dy = f_{R, {\bf m}}(1-y).$$
Suppose there is $y_0\in (0, 1)$ such that 
$f_{R, {\bf m}}(1-y_0) \leq f_{R, {\bf m}}(1+y_0)$. 

Then 
$-a_j /d \leq y_0 < -a_{j+1}/d$, for 
some $1\leq j \leq l$.
Hence 
$$ -\sum_{i\geq j+1}(a_ir_i - 
dy_0 r_i) \geq d-dy_0 .$$
This gives
$$dy_0(1-\sum_{i\geq j+1}r_i) \geq d+\sum_{i\geq j+1}a_ir_i = 
-\sum_{i= 1}^{l+1}a_ir_i + \sum_{i\geq j+1}a_ir_i = -\sum_{i=1}^ja_ir_i.$$

Since $j+1 \leq l+1$ and $-a_i\geq 0$, for $i$, implies that 
both the sides of the above equation are $=0$.
In particular $j=l =1$ and $r_{l+1} =1 =r_2$  and
$a_1 =0$. 
Hence, if for some $m_1$,  $F^{m_1*}(V)$ has the strong HN filtration then it 
is given by
$$0=E_0 \subset E_1 \subset E_2 = F^{m_1*}V,$$
 where 
$\rank~E_1 = \rank~V -r_2 = \mu({\bf m})-2\geq 1$ and 
$\deg~E_1= a_1 = 0$.
Since  $E_1\subseteq \oplus^s \sO_X$, we deduce that $E_1$ is trivial vector 
bundle of rank $\geq 1$.
Hence $h^0({X}, F^{m_1*}V) \geq h^0({X}, E_1) \geq 1$.

On the other hand, 
the linear independence of the  $\{h_1,\ldots, h_s\} \in h^0(X, \sO_X(1))$ 
over the field  $k$ implies that  the set 
$\{h_1^q,\ldots, h_s^q\}$ $\in H^0(X, \sO_X(q))$
 is a linearly independent set over $k$.
Therefore the map (as in (\ref{*})) 
 $$\varphi_{0, q}: \oplus^s H^0(X, \sO_X)\longto H^0(X, 
\sO_X(q))$$
is injective. 
But then 
$h^0(X, F^{m_1*}V) =0$, which is a contradiction. Hence we conclude 
that  $f_{R, {\bf m}}(1+y) < f_{R, {\bf m}}(1-y)$, for every 
$y\in (0, 1)$.\end{proof}

\subsection{The $F$-threshold $c^I({\bf m})$ and $\alpha(R, I)$ in  
characteristic $0$}

\vspace{5pt}

We recall a well known notion of {\em spread}. We restrict our attention to 
the relevant situation.

\begin{defn}\label{ds} 
Let $X$ be a nonsingular projective curve over an algebraically 
 field of  $\Char~0$ and let $V$ be a
vector bundle on $X$. 
Then the triple $(A, X_A, V_A)$ is a {\em spread} for the pair $(X, V)$, 
if  $A \subset k $ is a finitely 
generated $\Z$-algebra and $X_A$ is a projective scheme over 
$A$, and $V_A$ is a coherent, locally free sheaf over $X_A$
with
$$ X_A\tensor_A k = X\quad \mbox{and}\quad V_A\tensor_A k = V.$$

  By [EGA~IV], we can further choose $A$  (if necessary, replacing $A$ by a 
finitely generated $\Z$-algebra $A_0$ such that $A\subseteq A_0 \subset k$)  
 so that $X_A$ is a smooth projective $A$-scheme. Moreover
if 
$$0\subset E_1 \subset \cdots \subset E_l \subset V$$ 
is the HN filtration of $V$ then $\{E_{iA}\}_i$ are coherent, locally 
free sheaves 
 with  a filtration 
$0\subset E_{1A}\subset \cdots \subset E_{lA} \subset V_A$ 
such that for 
all closed points $s\in \Spec~A$, 
$$0\subset E_{1(s)}\subset \cdots\subset E_{l(s)}\subset V_s$$
is the HN filtration of $V_s := 
V_A\tensor_A{\overline {k(s)}}$ as sheaves over $X_s := 
X_A\tensor_A{\overline {k(s)}}$, where 
$E_{is} := 
E_{iA}\tensor_A{\overline {k(s)}}$ (this follows by 
the openness property of semistable vector bundles proved in [Ma]).

Similarly for a  pair $(R, I)$, where $R$ is a finitely generated $\N$ 
(standard) graded 
two dimensional domain  over a field of 
characteristic $0$ and $I\subset R$ is a homogeneous ideal of finite 
colength, then the triple $(A, R_A, I_A)$ is a {\em spread} of $(R, I)$, 
if $A$  is a 
finitely generated $\Z$-algebra
$A \subset k$ and $R_A$ is a finitely generated $\N$ (standard) graded algebra over 
$A$ and  $I_A\subset R_A$ is a homogeneous ideal such that 
$$R_A\tensor_A k = R\quad \mbox{and}\quad  
\mbox{Im}~(I_A\tensor_Ak) (\subset R) = I.$$ 
We can  choose such  an $A$ so that for any 
closed point $s\in \Spec~A$ the ring  $R_s = R_A\tensor_A {\overline{k(s)}}$ 
is a finitely generated $\N$ (standard) graded $2$-dimensional domain (which is a normal 
domain if $R$ is normal) over ${\overline{k(s)}}$ and the ideal $I_s  = 
\mbox{Im}~(I_A\tensor_A{\overline{k(s)}}) \subset
R_s$  is a homogeneous ideal of finite colength. 

Moreover, if $(A, X_A, V_A)$ is 
a spread for $(X, V)$
and $(A, R_A, I_A)$ is  a spread for $(R, I)$ then 
for any finitely generated $\Z$-algebra $A'$ with 
$A\subset A'\subset k$, the triple  
$(A', X_{A'}, V_{A'})$ ($(A', R_{A'}, I_{A'})$) also satisfies the same 
properties as 
 $(A, X_A, V_A)$ ($(A, R_A, I_A)$ respectively). Hence we may always assume 
that the spread (as above) is chosen  such that $A$ contains a given 
finitely generated $\Z$-algebra $A_0\subseteq k$. \end{defn}

\begin{notations}\label{ex5}Let $R$ be a standard graded domain over a field $k$ of 
characteristic $0$
with a homogeneous ideal $I$ of finite colength such that $I$ is generated by 
the same degree elements. Let $S$ be the integral closure.
Also let $X = \mbox{Proj}~S$ be the  nonsingular projective curve over $k$ 
with a vector bundle $V$ given by the canonical  exact sequence 
of the sheaves of $\sO_X$-modules
\begin{equation}\label{e10}
0\longto V\longto \oplus^\mu\sO_{X}(1-d_0)\longto \sO_{X}(1)\longto 
0,\end{equation}
where $I$ is generated by $\mu$ homogeneous generators of  
degree $d_0$ each.

For $(X, V)$, $(R, I)$ and $(S, IS)$, we can,  respectivey, choose spreads
$(A, X_A, V_A)$, $(A, R_A, I_A)$, $(A, S_A, IS_A)$,  as above, 
 such that, in addition, 
for every $s\in \mbox{Spec}~A$,
$S_s$ is the intergral closure of $R_s$ and
$V_s$ is the syzygy bundle given by 
 the  canonical short exact sequence
 of locally free sheaves of $\sO_{X_s}$-modules
\begin{equation}\label{e1}
0\longto V_s\longto \oplus^\mu\sO_{X_s}(1-d_0)\longto \sO_{X_s}(1)\longto 
0.\end{equation}
\end{notations}

\begin{rmk}\label{r6}If  ${\tilde E} = V$ or ${\tilde E} =
 \oplus^\mu\sO_{X_s}(1-d_0)$ is a  vector bundle on 
$X$ and  
$(A, X_A, {\tilde E}_A)$ is a spread for $(X, {\tilde E})$ (as given  above). Then 
(see the proof of Theorem~4.6 in [T4])  
$$f^{\infty}_{{\tilde E},\sO_X(1)} := \lim_{p_s\to \infty}
f_{{{\tilde E}_s}, \sO_{X_s}(1)}~~\mbox{exists},$$
 where
$s\in \mbox{Spec}(A)$ are closed points and $p_s =$ characteristic of $R_s$.
Moreover, if 
$$ 0 = E_{0} \subset E_{1} \subset \cdots \subset E_{l} \subset E_{l+1} =
{\tilde E} $$ 
is the HN filtration of ${\tilde E}$ such that 
$\mu_i = \mu(E_i/E_{i-1})$ and $r_i = \rank(E_i/E_{i-1})$ then 
$$\begin{array}{lcl}
1 \leq x < 1-\mu_1/d & \implies & f^{\infty}_{{\tilde E}, \sO_{X}(1)}(x) =
-\left[\sum_{i\geq 1}\mu_ir_i+d(x-1)r_i\right] \\
1-\mu_i/d  \leq x < 1-\mu_{i+1}/d  & \implies & 
f^{\infty}_{{\tilde E}, \sO_{X}(1)}(x) =
- \left[\sum_{k\geq {i+1}}\mu_kr_k+d(x-1)r_k\right].\end{array}$$
\end{rmk}

\begin{thm}\label{vb1}Let $(R, I)$ be a standard graded pair 
and $(A, R_A, I_A)$ and $(A, X_A, V_A)$ be associated spreads
as   
in Notations~\ref{ex5}.
Let  $s$ denote a closed point of $\mbox{Spec}(A)$ and $p_s = \Char~R_s$. Then
\begin{enumerate}
\item  for every $x\geq 0$, 
$f^{\infty}_{R, I}(x) := \lim_{p_s\to \infty}f_{R_s, I_s}(x)$ exists and
the function\linebreak $f^{\infty}_{R, I}:[0, \infty)\longto [0, 
\infty)$ is a continuous compactly supported  function 
with 
 $$\alpha^{\infty}(R,I) := \mbox{Sup}~\{x\mid f^\infty_{R, I}(x) \neq 0\} =
1- \frac{\mu_{min}(V)}{d}.$$
\item $\lim_{p_s\to \infty}\alpha(R_s, I_s) = \alpha^{\infty}(R,I)$. 
\item  $\alpha(R_s, I_s) \geq \alpha^{\infty}(R, I)$, for $p_s>>0$.
If, in addition, the bundle $V$ is semistable then
$$\alpha(R_s, I_s) = \alpha^{\infty}(R, I)~~
\iff~~ V_{s}~~\mbox{is
strongly semistable}.$$

\end{enumerate}
\end{thm}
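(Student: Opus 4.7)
The strategy is to reduce everything about $(R_s, I_s)$ to the two syzygy bundles $V$ and $M := \oplus^{\mu}\sO_X(1-d_0)$ appearing in \eqref{e10}, and then invoke the bundle-level limit result that is already recorded in Remark~\ref{r6}. Concretely, applying Theorem~\ref{n1} fiber-by-fiber (for every closed $s \in \Spec A$) gives
\[
f_{R_s, I_s}(x) \;=\; f_{V_s, \sO_{X_s}(1)}(x) \;-\; f_{M_s, \sO_{X_s}(1)}(x).
\]
Since the spreads of $V$ and $M$ provided by Notations~\ref{ex5} satisfy the compatibility needed for Remark~\ref{r6}, each of the pointwise limits $f^{\infty}_{V,\sO_X(1)}$ and $f^{\infty}_{M,\sO_X(1)}$ exists and has a fully explicit piecewise-linear description in terms of the HN slopes and ranks of $V$ and $M$ on the characteristic-zero curve $X$.

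For (1) I would set $f^{\infty}_{R,I} := f^{\infty}_{V,\sO_X(1)} - f^{\infty}_{M,\sO_X(1)}$, whose existence as a pointwise limit is immediate and whose continuity and compact support follow from the piecewise-linear formulas of Remark~\ref{r6}. To locate $\alpha^{\infty}(R,I)$ observe that $M$ is (strongly) semistable of slope $\mu(M) = d(1-d_0)$, so the support of $f^{\infty}_{M,\sO_X(1)}$ ends exactly at $x = d_0$. By Remark~\ref{r4}(2a) applied to \eqref{e10} (and using $\mu(\sO_X(1)) = d > \mu(M)$), we get $\mu(V) \leq \mu(M)$, hence $1 - \mu_{min}(V)/d \geq d_0$, strictly in the non-semistable case. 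On the open interval ending at $1 - \mu_{min}(V)/d$ the formula in Remark~\ref{r6} makes $f^{\infty}_{V,\sO_X(1)}$ strictly positive (linear with negative slope, reaching $0$ precisely at the endpoint), while $f^{\infty}_{M,\sO_X(1)} = 0$ already beyond $d_0$; hence the subtraction does not shrink the support and $\alpha^{\infty}(R,I) = 1 - \mu_{min}(V)/d$.

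For (2) and the inequality in (3), Theorem~\ref{n1}(3) gives $\alpha(R_s, I_s) = 1 - a_{min}(V_s)/d$, so both assertions become statements about $a_{min}(V_s)$ as $p_s \to \infty$. By Maruyama's openness of semistability in families (after possibly enlarging $A$, as already arranged in Notations~\ref{ex5}), for every closed $s \in \Spec A$ the HN filtration of $V$ specializes to the HN filtration of $V_s$, so $\mu_{min}(V_s) = \mu_{min}(V)$. Combining with Remark~\ref{r4}(2c) yields $a_{min}(V_s) \leq \mu_{min}(V_s) = \mu_{min}(V)$, which is the inequality $\alpha(R_s, I_s) \geq \alpha^{\infty}(R, I)$ of (3). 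For (2) I invoke the convergence $a_{min}(V_s) \to \mu_{min}(V)$ as $p_s \to \infty$ (the substantive analytic ingredient, already used implicitly to justify Remark~\ref{r6}, and proved in the cited [T4]), which upgrades the inequality to the desired limit.

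For the final equivalence in (3), assume $V$ is semistable on $X$. Then $V_s$ is semistable for $p_s \gg 0$, again by Maruyama's openness, so $\mu(V_s) = \mu_{min}(V_s) = \mu_{min}(V)$. The bundle $V_s$ is strongly semistable precisely when its strong HN filtration is trivial, i.e.\ $a_{min}(V_s) = \mu(V_s)$; under the identifications above this is exactly $\alpha(R_s, I_s) = \alpha^{\infty}(R, I)$. The main obstacle in this plan is the convergence $a_{min}(V_s) \to \mu_{min}(V)$: the upper bound $a_{min}(V_s) \leq \mu_{min}(V)$ is formal, but controlling how far the strong HN slopes of $V_s$ can drop below the HN slopes of $V$ for growing $p_s$ is a genuine mixed-characteristic statement and is the place where the only non-routine input enters.
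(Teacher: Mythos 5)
Your proposal is correct and mirrors the paper's own argument step for step: reduce everything to the bundles $V$ and $M$ via Theorem~\ref{n1}, pass to the limit using the explicit piecewise-linear formulas of Remark~\ref{r6}, identify $\alpha^{\infty}$ from the ordering of slopes $\mu_{min}(V)\leq\mu(V)<\mu(M)=\mu_{min}(M)$, and handle (2)--(3) through the identity $\alpha(R_s,I_s)=1-a_{min}(V_s)/d$ together with the convergence $a_{min}(V_s)\to\mu_{min}(V)$ (which the paper cites from Lemma~1.16 of [T1] rather than [T4], but it is the same substantive input you flagged as the only non-routine ingredient) and the openness of semistability in the family.
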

\begin{proof}Let $M = \oplus^\mu\sO_X(1-d_0)$.
\vspace{5pt}

\noindent{(1)}\quad By Theorem~\ref{n1}, $f_{R_s, I_s}(x) = f_{{V_s}, \sO_{X_s}(1)}(x)
-f_{{M_s}, \sO_{X_s}(1)}(x)$. 
Therefore (Remark~\ref{r6})
$$f^\infty_{R,I}(x) := \lim_{p_s\to \infty}f_{R_s, I_s}(x)
= f^\infty_{V,\sO_{X}(1)}(x) - f^\infty_{M,\sO_{X}(1)}(x),$$
and hence is a continuous compactly supported  function. Moreover 
 we can write the functions $f^\infty_{V,\sO_{X}(1)}$ and 
$f^\infty_{M,\sO_{X}(1)}$ in terms of the HN data of $V$ and $M$ respectively.
Since 
$\mu_{min}(V) \leq \mu(V) < \mu(M) = \mu_{min}(M)$ (by Remark~\ref{r4}), we 
have $\alpha^\infty(R,I) = 1- {\mu_{min}(V)}/{d}$.

\vspace{5pt}
\noindent{(2)}\quad The second assertion follows by Lemma~1.16 of [T1] 
which asserts
 that  
$\mu_{min}(V) = \lim_{p_s\to \infty}a_{min}(V_s)$.

\vspace{5pt}

\noindent{(3)}\quad For $p_s>>0$ we have  $a_{min}(V_s) \leq 
\mu_{min}(V_s) = \mu_{min}(V)$ (Remark~\ref{r4}). By
Theorem~\ref{n1},  $\alpha(R_s, I_s) =   1-{a_{min}(V_s)}/{d}$, which implies 
$\alpha(R_s, I_s)\geq \alpha^{\infty}(R, I)$.

If the bundle $V$ is semistable then, for $p_s>>0$,   
$\mu_{min}(V) = \mu(V) = \mu(V_s)$. Now $\alpha(R_s, I_s) = 
\alpha^{\infty}(R, I)$ if and only if 
$a_{min}(V_s) = \mu(V)$. 
On the other hand $a_{min}(V_s) = \mu(V_s)$ if and only if the bundle
  $V_s$ is strongly 
semistable on $X_s$.
\end{proof}

\begin{rmk}(1)\quad Theorem~\ref{vb1}~(2) does not 
straightaway follow from Theorem~\ref{vb1}~(1). In fact
there   exist uniformly convergent 
sequences  $\{g_s:\R_{\geq 0}\longto \R_{\geq 0}\}_{s\in \N}$ 
 consisting of  compactly supported 
continuous functions, converging  to a function 
$g:\R_{\geq 0}\longto \R_{\geq 0}$ but  
$$\lim_{s\to \infty} \mbox{Sup}\{x\mid g_s(x)\neq 0\} > 
\mbox{Sup}\{x\mid g(x)\neq 0\}.$$
  
{\em e.g.},  let 
$\{g_s:[0, \infty)\longto [0, \infty)\}$ be the set of functios given by
$$g_s(x) = \frac{x}{s^2},~~\mbox{if}~~ x\in [0,1], \quad 
g_s(x) = \frac{2-x}{s^2},~~\mbox{if}~~x\in [1, 2] \quad{and}\quad 
g_s(x) = 0,~~\mbox{if}~~x\geq 2.$$

\noindent\quad{(2)}\quad
Note that if
$I$ has homogeneous generators of degree $1$, then Theorem~\ref{vb1}~(2) is a consequence of
 Theorem~4.6 of [T4].
\end{rmk}

\vspace{10pt}

\begin{cor}\label{pc1}Let  $(R, I)$ be a pair as 
in Theorem~\ref{vb1}. If $\deg\sO_X(1) > 2\mbox{genus}(X)$, where
 $X=\mbox{Proj~R}$, then 
 for $p_s>>0$, 
$$\alpha(R_s, I_s) = \alpha^{\infty}(R, I)~ \iff V_{s}~~\mbox{is 
strongly semistable}.$$
\end{cor}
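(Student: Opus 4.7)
\textbf{Proof plan for Corollary~\ref{pc1}.}

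The plan is to reduce the statement directly to Theorem~\ref{vb1}~(3). Indeed, that theorem already establishes the biconditional ``$\alpha(R_s,I_s) = \alpha^\infty(R,I)$ iff $V_s$ is strongly semistable (for $p_s\gg 0$)'' under the additional hypothesis that the characteristic $0$ syzygy bundle $V$ is itself semistable on $X$. Thus the only thing to prove in Corollary~\ref{pc1} is:
\[
\deg \sO_X(1) > 2\,\mathrm{genus}(X) \quad \Longrightarrow\quad V \text{ is semistable}.
\]
Once this implication is in hand, the conclusion of Corollary~\ref{pc1} is a verbatim application of Theorem~\ref{vb1}~(3).

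To carry out the semistability step, I would first use additivity of $f_{R,I}$ to replace $R$ by its normalization $S$, so that $X = \mathrm{Proj}\,S$ is a smooth projective curve and $V$ is the honest syzygy bundle appearing in
\[
0 \to V \to \oplus^{\mu} \sO_X(1-d_0) \to \sO_X(1) \to 0.
\]
Twisting by $\sO_X(d_0-1)$, the bundle $V(d_0-1)$ is identified with the kernel of the evaluation-style surjection $W \otimes \sO_X \twoheadrightarrow \sO_X(d_0)$, where $W = \langle h_1,\ldots,h_\mu\rangle \subseteq H^0(X,\sO_X(d_0))$ is the subspace spanned by the chosen generators. This is a \emph{Lazarsfeld kernel bundle} associated to a base-point-free linear series on $X$, and its semistability for sufficiently positive line bundles is a classical fact. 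Specifically, the hypothesis $\deg \sO_X(1) > 2g$ implies $\deg \sO_X(d_0) > 2g$ (as $d_0 \geq 1$), which by the Butler--Paranjape-Ramanan stability theorem for kernel bundles ensures that any destabilizing subbundle of $V$ would contradict the surjectivity/vanishing of the relevant multiplication maps on global sections. Hence $V$ is semistable.

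With semistability of $V$ established, the third paragraph of the proof is simply the invocation of Theorem~\ref{vb1}~(3). The main obstacle is the semistability claim for $V$: the rest of the corollary is packaged cleanly into the previously proved theorem, but the positivity hypothesis $\deg \sO_X(1) > 2\,\mathrm{genus}(X)$ has to be converted into semistability, which is where either a direct cohomological argument (based on vanishing of $H^1$ of negative twists and surjectivity of multiplication maps, coming from $\deg L > 2g$) or a citation of the classical kernel-bundle stability results is needed.
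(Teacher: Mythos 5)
Your proposal is correct and follows exactly the same route as the paper: the paper's entire proof of Corollary~\ref{pc1} consists of citing the semistability of $V$ (attributed to [PR] together with Lemma~2.1 of [T2]) and then applying Theorem~\ref{vb1}(3), which is precisely your reduction. The one place you should tighten is the semistability step. The classical Paranjape--Ramanan/Butler kernel-bundle theorem applies to the evaluation map on the \emph{full} space of global sections $H^0(X,L)$, but in this corollary $W = \langle h_1,\dots,h_\mu\rangle$ is in general a proper subspace of $H^0(X,\sO_X(d_0))$ (already so as soon as $I\neq {\bf m}$, or $d_0 >1$), and for an incomplete base-point-free linear system the kernel bundle need not be semistable merely because $\deg L > 2g$. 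The paper's second citation, Lemma~2.1 of [T2], is what closes this gap; as written, your invocation of ``Butler--Paranjape--Ramanan'' glosses over the subsystem issue, so you should either cite a version of the kernel-bundle result that actually covers such subsystems under this degree bound, or reproduce the argument from [T2].
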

\begin{proof}Since, by  [PR] and Lemma~2.1 of [T2], the 
  bundle $V$ is semistable, the proof follows from Theorem~\ref{vb1}~(3).\end{proof}

\noindent{\underline{\em{Proof of Theorem~C and Corollary~D}}.\quad Theorem~C 
and Corollary~D  follow from 
Theorem~\ref{vb1} and Corollary~\ref{pc1} (respectively) as $c^I({\bf m}) 
= \alpha(R, I)$, by Proposition~\ref{p3}.\hspace{5pt}$\Box$

\vspace{10pt}

\begin{rmk}
 We will see in the next section (Theorem~\ref{?}) that the set of  
primes $p_s$, where
the equality $c^{{\bf m}_s}({\bf m}_s) = c^{\bf m}_{\infty}({\bf m})$ holds, 
is always a  
dense (but not necessarily open) set whenever $X$ is an irreducible plane 
trinomial curve. \end{rmk}

\section{$F$-thresholds  of plane trinomials}
For a pair $(R, I)$, where $R$ is a two dimensional domain and $I$ 
is generated by homogeneous elements of the same degree,  the F-threshold 
$c^I({\bf m})$ is given  in terms of the minimum  
strong HN slope of the  syzygy bundle $V$ (by Theorem~B). On the other 
hand, if $R$ is a  irreducible plane trinomial curve 
of degree $d\geq 3$ and $I(n) = (x^n, y^n, z^n)$ then all the 
strong HN slopes of the syzygy bundle $V$  can be computed  due to  
  a group theoretic interpretations of the complete 
strong HN data of the syzygy bundle (Theorem~3.5 in [T3]). Recall  
$$0\longto V\longto \oplus\sO_X(1-n)\longto \sO_X(1)\longto 0$$
is the canonical short exact sequence of $\sO_X$-modules and  
$$\alpha(R, I(n)) = c^{I(n)}({\bf m}) = 1-a_{min}(V)/d.$$

Note that such a plane curve is either `regular' or `irregular' 
(this terminology is taken from [M2]), where 
 a plane curve is called irregular if it has a singular point of 
multiplicity $r\geq d/2$, otherwise it is called regular.

\begin{thm} If $R = k[x,y,z]/(h)$, where $h$ is an irregular 
trinomial of degree $d$ and 
multiplicity $r$ (therefore $r\geq d/2$). Then for ${\bf m} = (x,y,z)$ and 
$I(n) = (x^n, y^n, z^n)$, where $n\geq 1$, we have 
$$\alpha(R, I(n)) = c^{I(n)}({\bf m}) = \frac{n+2}{2}+
\left(\frac{(2r-d)n}{2d}\right)^2.$$
\end{thm}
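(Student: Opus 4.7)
The plan is to combine three ingredients: (i) Proposition~\ref{p3}, which gives $c^{I(n)}({\bf m}) = \alpha(R, I(n))$ since $R$ is a two-dimensional domain and $I(n)$ is generated by elements of equal degree; (ii) Theorem~\ref{n1}(3), which yields $\alpha(R, I(n)) = 1 - a_{min}(V)/d$, where $V$ is the syzygy bundle on $X = \mathrm{Proj}\,S$ (with $S$ the integral closure of $R$) fitting into
$$0 \longrightarrow V \longrightarrow \sO_X(1-n)^{3} \longrightarrow \sO_X(1) \longrightarrow 0;$$
and (iii) Theorem~3.5 of [T3], which provides a group-theoretic description of the complete strong HN data of $V$ for any irreducible plane trinomial. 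Together these reduce the theorem to an explicit evaluation of $a_{min}(V)$ under the irregularity hypothesis, after which the claimed formula for $\alpha(R,I(n))$ follows by algebra.

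Carrying this out, I would feed the trinomial $h$ into the [T3] machinery: after pullback to a suitable finite cover determined by the exponents of $h$, the bundle $V$ decomposes into isotypic components under a finite cyclic group action, and the strong HN slopes are read off as explicit linear functions of the characters (with $\lambda_h$ encoding the size of the relevant character lattice). In the irregular regime $r \ge d/2$, I would identify the extremal character using the local data at the multiplicity-$r$ singular point and compute the corresponding slope. The target is the closed form
$$a_{min}(V) \;=\; -\frac{dn}{2} \;-\; \frac{(2r-d)^{2}\,n^{2}}{4d},$$
from which $1 - a_{min}(V)/d = (n+2)/2 + ((2r-d)n/(2d))^{2}$ is immediate.

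The main obstacle is the third step: pinning down the minimizing character and establishing the quadratic closed form. The role of the hypothesis $r\ge d/2$ is to push the minimum into a specific extremal corner of the character lattice where the slope takes the above quadratic shape in $(r,d,n)$; in the regular regime $r<d/2$ the dominant character is qualitatively different, and the resulting formula for $c^{I(n)}({\bf m})$ would differ both in shape and in its asymptotics in $n$. The technical core is to verify that no other character produces a smaller slope --- so that the proposed minimum is indeed attained --- and to rewrite the [T3] data in the compact quadratic form above.
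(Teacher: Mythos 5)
Your reduction is exactly the paper's: invoking Proposition~\ref{p3} to identify $c^{I(n)}({\bf m})$ with $\alpha(R, I(n))$, and then Theorem~\ref{n1}(3) to rewrite this as $1 - a_{min}(V)/d$ for the syzygy bundle $V$ of $(x^n, y^n, z^n)$, is precisely the framework the paper sets up at the start of Section~7. Your algebra at the end is also correct: $a_{min}(V) = -dn/2 - (2r-d)^2 n^2/(4d)$ does give $1 - a_{min}(V)/d = (n+2)/2 + ((2r-d)n/(2d))^2$.

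The gap is the third step, which you acknowledge but whose difficulty you misdiagnose. You propose to run the character-lattice / taxicab-distance machinery of Theorem~3.5 of [T3], but that theorem (reproduced here as Theorem~\ref{r*}) is formulated for \emph{regular} trinomials, i.e.\ $r < d/2$ --- the case where the HN slopes depend arithmetically on $p$ via the congruence class mod $2\lambda_h$. The present statement concerns the \emph{irregular} case $r \ge d/2$, where the situation is qualitatively different: the syzygy bundle's destabilizing sub-line bundle is pinned down by the multiplicity-$r$ singular point and the answer is $p$-independent (for $p$ not too small), with no character lattice entering at all. The paper's actual proof is a one-line citation to Theorem~1.1 of [T3], which is the result tailored to the irregular regime and gives $a_{min}(V)$ (equivalently the HK density function) directly. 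Your plan to identify ``the minimizing character'' and ``verify that no other character produces a smaller slope'' would have you working inside the wrong theorem; the irregular case does not present itself as a minimization over a character lattice. So the quadratic closed form for $a_{min}(V)$ remains unproved in your proposal, and the tool you reached for would not produce it.
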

\begin{proof}Follows from Theorem~1.1 in [T3].\end{proof}
For regular trinomial plane curves, we recall the following notations
from [M2].

\begin{notations}\label{nt} Given a regular trinomial $h$ of degree $d$ 
in $k[x,y,z]$ (upto 
linear change of variables, any such 
trinomial is of type~I or type~II, as given below),
we can associate positive integers
$\alpha, \beta, \nu, \lambda >0$ as follows:

\begin{enumerate}
\item Type~(I)~$h = x^{a_1}y^{a_2} + y^{b_1}z^{b_2} +z^{c_1}x^{c_2}$, we denote
$$\alpha = a_1+b_1-d,~~ \beta = a_1+c_1-d,~~ \nu = b_1+c_1-d,~~
\lambda = a_1b_1+a_2c_2-b_1c_2.$$
\item Type~(II)~ $h = x^d + x^{a_1}y^{a_2}z^{a_3} + y^bz^c$,
 we denote
$$\alpha = a_2, \beta = c,
\nu = a_2+c-d\quad\mbox{and}\quad \lambda = a_2c-a_3b.$$
\end{enumerate}
  Moreover we denote
\begin{equation}\label{th}t_h = 
(\alpha/\lambda, \beta/\lambda, \nu/\lambda),\quad\mbox{and}\quad
a=\mbox{gcd}(\alpha, \beta, \nu, \lambda)~~\mbox{and}~~\lambda_h = 
\lambda/a.\end{equation}
\end{notations}

\begin{defn}\label{dd1} For a given regular trinomial $h$, 
we recall the following definition given in
[HM] and [M2].
Let $L_{odd} = \{u=(u_1, u_2, u_3)\in \Z^3\mid
\sum_i u_i~~~~\mbox{odd}\}$.
For any $u\in L_{odd}$ and for $l,s\in \Z$ and $n\geq 1$,
the {\em taxicab distance} 
$$\mbox{Td}(l^st_hn,u) = \mbox{Td}((\frac{l^s\alpha n}{\lambda}, 
\frac{l^s\beta n}{\lambda}, \frac{l^s\nu n}{\lambda}), (u_1, u_2, u_3)) = 
|\frac{l^s\alpha n}{\lambda}-u_1|+
|\frac{l^s\beta n}{\lambda}-u_2| + |\frac{l^s\nu n}{\lambda}-u_3|.$$

For a given regular trinomial $h$ and a given $n\geq 1$, let 
(1)~~$D_n(l) :=  s\geq 0$ if   
 $s$ is the smallest integer, for which the inequality 
 $\mbox{Td}(l^st_hn, u)< 1$ has a solution for some $u\in L_{odd}$ and 
in that case 
define $T_n(l) = \mbox{Td}(l^st_hn, u)$.
(2) If there is no such $s$  then  we define $T_n(l)= 1$ and $D_n(l) = \infty$.

\end{defn}

We recall the following result (Theorem~3.5 in [T3]).

\begin{thm}\label{r*} For a given  regular trinomial  $h\in k[x,y,z]$ over a 
field of $\Char~p >0$ and given
 $n\geq 1$, 
there is a well defined set theoretic map:
$$\Delta_{h,n}:\frac{(\Z/2\lambda_h\Z)^*}{\{1,-1\}}\longto
\left\{\frac{1}{\lambda_h}, \frac{2}{\lambda_h}, 
\ldots, \frac{\lambda_h-1}{\lambda_h}
\right\}
\times \{0, 1, \ldots, \phi(2\lambda_h)-1\} \bigcup \{(1, \infty)\},$$
given by $l\to (T_n(l), D_n(l))$,
where $D_n(l)$ and $T_n(l)$ are as in  Definition~\ref{dd1}.
Moreover 
\begin{enumerate}
\item $\Delta_{h,n} \equiv \Delta_{h,n+2\lambda_h}$.
\item Either $D_n(l) = \infty$ or  $D_n(l) < $
the order of the element $l$ in the group
$(\Z/2\lambda_h\Z)^*$.
\end{enumerate}
\end{thm}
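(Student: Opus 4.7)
The plan rests on two translation-invariance facts. Writing $\alpha_0 = \alpha/a$, $\beta_0 = \beta/a$, $\nu_0 = \nu/a$, so that $t_h = (\alpha_0, \beta_0, \nu_0)/\lambda_h$ with $\gcd(\alpha_0, \beta_0, \nu_0, \lambda_h) = 1$, I would first observe: (i) the coordinates of $l^s t_h n$ lie in $(1/\lambda_h)\Z$, hence $\mathrm{Td}(l^s t_h n, u) \in (1/\lambda_h)\Z_{\geq 0}$ for every $u \in \Z^3$; (ii) the set $L_{odd}$ is invariant under translation by $2\Z^3$, so adding any $w \in 2\Z^3$ to $l^s t_h n$ does not alter the minimum of $\mathrm{Td}(l^s t_h n, \cdot)$ over $L_{odd}$. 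Essentially everything in the theorem reduces to (ii) via two algebraic identities: $l^s - (l')^s \in 2\lambda_h\Z$ whenever $l \equiv l' \pmod{2\lambda_h}$, and $(n + 2\lambda_h) t_h - n t_h = 2(\alpha_0, \beta_0, \nu_0) \in 2\Z^3$.

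More precisely, if $l \equiv l' \pmod{2\lambda_h}$ then $l^s t_h n - (l')^s t_h n = 2k n(\alpha_0, \beta_0, \nu_0) \in 2\Z^3$ for some $k \in \Z$, so (ii) gives $(T_n(l), D_n(l)) = (T_n(l'), D_n(l'))$; replacing $l$ by $-l$ sends $l^s t_h n$ to $\pm l^s t_h n$, and since $-L_{odd} = L_{odd}$ the map $\Delta_{h,n}$ also descends modulo $\{\pm 1\}$. Property~(1) is then immediate, since $l^s t_h (n + 2\lambda_h) - l^s t_h n = 2 l^s(\alpha_0, \beta_0, \nu_0) \in 2\Z^3$. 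For property~(2), I would let $o$ denote the order of $l$ in $(\Z/2\lambda_h\Z)^*$: given $s \ge 0$, write $s = s' + mo$ with $0 \le s' < o$, so that $l^s \equiv l^{s'} \pmod{2\lambda_h}$, and deduce from the well-definedness argument that the set of $u \in L_{odd}$ with $\mathrm{Td}(l^s t_h n, u) < 1$ coincides with the corresponding set for $s'$. Hence the set of exponents admitting a solution is either empty (giving $D_n(l) = \infty$) or a union of cosets of $o\Z$ in $\Z_{\ge 0}$, whose minimum is therefore $< o \le \phi(2\lambda_h)$, yielding the stated codomain for $D_n(l)$.

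The hard part will be verifying the codomain claim for $T_n(l)$, namely that $T_n(l) \in \{1/\lambda_h, \ldots, (\lambda_h-1)/\lambda_h\}$ whenever $D_n(l) < \infty$. Observation~(i) immediately forces $T_n(l)$ to be a multiple of $1/\lambda_h$ strictly less than $1$, so only the exclusion of $T_n(l) = 0$ is at issue. A vanishing would require $\lambda_h$ to divide each of $l^s n \alpha_0$, $l^s n \beta_0$, $l^s n \nu_0$; since $\gcd(l, \lambda_h) = 1$ and $\gcd(\alpha_0, \beta_0, \nu_0, \lambda_h) = 1$, this forces $\lambda_h \mid n$, and in that case $l^s t_h n$ is integral. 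The remaining question is whether such an integer triple can lie in $L_{odd}$, which is a parity question on $l^s (n/\lambda_h)(\alpha_0 + \beta_0 + \nu_0)$; using that $l$ is odd (being coprime to $2\lambda_h$) together with the explicit form of $(\alpha, \beta, \nu)$ in Notations~\ref{nt}, one handles Types~(I) and~(II) case by case to rule out the coincidence. This parity bookkeeping is the only non-formal step; all other assertions in the theorem are direct consequences of (i) and (ii).
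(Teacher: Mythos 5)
The paper gives no proof of this statement: it is simply recalled from [T3] (Theorem~3.5 there), so there is no in-paper argument to compare against. On its own merits, your reduction of everything to the observation that $\mathrm{Td}(\cdot, \cdot)$ restricted to $L_{odd}$ is invariant under translation by $2\Z^3$ is the right idea and cleanly dispatches well-definedness modulo $2\lambda_h$, the passage to the quotient by $\pm 1$ (via $-L_{odd} = L_{odd}$), the periodicity $\Delta_{h,n}=\Delta_{h,n+2\lambda_h}$, and the bound $D_n(l) < \mathrm{ord}(l) \le \phi(2\lambda_h)$. The identities $l^s - (l')^s \in 2\lambda_h\Z$ and $(n+2\lambda_h)t_h - n t_h = 2(\alpha_0,\beta_0,\nu_0)$ are exactly the right inputs for these.

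The genuine gap is the codomain claim for $T_n(l)$, namely the exclusion of the value $0$, which you flag as ``the hard part'' but do not actually carry out. Your reduction to the case $\lambda_h \mid n$ is correct, and so is the observation that a coincidence requires the parity $l^s (n/\lambda_h)(\alpha_0 + \beta_0 + \nu_0)$ to be odd. But the assertion that the explicit forms of $(\alpha,\beta,\nu)$ in Types~(I) and~(II) rule this out does not appear to be true in general: for the Fermat trinomial $h = x^d+y^d+z^d$ (Type~(II) with $a_1=a_3=b=0$, $a_2=c=d$) one has $\lambda=d^2$, $a=d$, $\lambda_h=d$, $(\alpha_0,\beta_0,\nu_0)=(1,1,1)$, so $\alpha_0+\beta_0+\nu_0 = 3$; taking $n$ to be $d$ itself with $d$ odd gives $t_h n = (1,1,1) \in L_{odd}$ and hence $D_n(l)=0$, $T_n(l)=0$, landing outside your claimed codomain. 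So either the exclusion of $0$ requires a constraint on $n$ that is implicit in how $\Delta_{h,n}$ is consumed downstream, or it needs a different argument entirely; the parity bookkeeping you invoke does not close the case, and this remains an unverified step in the proposal.
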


Following result and explicit examples can be obtained easily from
Lemma~5.4 (in [T3]). 

\begin{thm}\label{?}Let $R = k[x,y,z]/(h)$ and  ${\bf m} = (x,y,z)$
 where  $h$ is a regular 
trinomial of degree $d$ over a 
field of $\Char~p >0$. 
Let 
 $\Delta_{h,n}$ be the  set theoretic map
 given as in Theorem~\ref{r*}. Then, for 
 $p\geq \max\{n, d^2\}$ and  $I(n) = (x^n, y^n, z^n)$, where $n\geq 1$,  
we have the following:
\begin{enumerate}
\item $p\equiv\pm 1\pmod{2\lambda_h}$ then 
$$\alpha(R, I(n)) = c^{I(n)}({\bf m}) = \frac{n+2}{2}.$$
\item If $p\equiv\pm l\pmod{2\lambda_h}$  and
$\Delta_{h,n}(l) = (T_n(l), D_n(l))$ then
$$\alpha(R, I(n)) = c^{I(n)}({\bf m}) = \frac{n+2}{2}+
\frac{\lambda(1-T_n(l))}{2p^{D_n(l)}d},$$
and the integers $\lambda(1-T_n(l))$ and $D_n(l)$ are bounded in terms of the 
exponents of the trinomal $h$ such that 
\begin{enumerate}
\item $\lambda(1-T_n(l)) \in \{0, \cdots, \lambda-1\}$  and 
\item $D_n(l) \in \{0, \ldots, O(l)\}$, where
 $O(l)$ is the order of 
the element $l$ in the group $(\Z/2\lambda_h\Z)^*\}$. 
\end{enumerate}
\end{enumerate}
\end{thm}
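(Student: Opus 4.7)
The plan is to reduce the theorem, via the machinery assembled earlier in the paper, to an explicit computation of the minimum strong HN slope of the relevant syzygy bundle, and then invoke [T3, Lemma~5.4] (to which the paragraph preceding the statement explicitly defers) for that final computation. First I would apply Proposition~\ref{p3}: because $I(n)$ is generated by forms of the same degree $n$ and $R=k[x,y,z]/(h)$ is a two-dimensional graded domain, one has $c^{I(n)}({\bf m}) = \alpha(R, I(n))$. Then Theorem~\ref{n1}(3) identifies $\alpha(R, I(n))$ with $1 - a_{min}(V)/d$, where $V$ is the rank-two syzygy bundle on $X = \mathrm{Proj}(S)$ (with $S$ the integral closure of $R$) defined by
$$0 \longrightarrow V \longrightarrow \oplus^{3}\sO_X(1-n) \longrightarrow \sO_X(1) \longrightarrow 0,$$
and $d = \deg\sO_X(1) = \deg h$. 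These two steps reduce the theorem to computing $a_{min}(V)$.

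Next I would appeal to Theorem~\ref{r*}, which attaches to the class $l$ of $p$ in $(\Z/2\lambda_h\Z)^*/\{\pm 1\}$ the taxicab data $(T_n(l), D_n(l))$, and to Lemma~5.4 of [T3], which under the hypothesis $p \geq \max\{n, d^2\}$ converts that datum into an explicit closed-form expression for $a_{min}(V)$. Substituting this expression into $1 - a_{min}(V)/d$ yields the baseline value $(n+2)/2$ together with the stated correction term $\lambda(1 - T_n(l))/(2 p^{D_n(l)} d)$. Specializing to $l = 1$ (Case~(1)), one has $T_n(1) = 1$ (equivalently $D_n(1) = \infty$), so the correction vanishes and one recovers $\alpha(R, I(n)) = (n+2)/2$; for general $l$ (Case~(2)) the full formula follows.

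The bounds on the integers in the correction are immediate from the definitions. Since $T_n(l) \in \{k/\lambda_h : 1 \leq k \leq \lambda_h - 1\} \cup \{1\}$ by Theorem~\ref{r*}, the product $\lambda(1 - T_n(l))$ takes the form $a(\lambda_h - k)$ for some $k \in \{1,\ldots,\lambda_h-1\}$ (or equals $0$), so it lies in $\{0, a, 2a, \ldots, \lambda - a\} \subset \{0, 1, \ldots, \lambda - 1\}$; and by Theorem~\ref{r*}(2) one has $D_n(l) \in \{0, 1, \ldots, O(l) - 1\} \cup \{\infty\}$, the latter case contributing a vanishing correction.

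The main obstacle is of course [T3, Lemma~5.4] itself: translating the combinatorial taxicab-distance data $(T_n(l), D_n(l))$ attached to the trinomial $h$ into the explicit minimum strong HN slope of $V$ requires a careful analysis of the Frobenius-periodic structure of the strong HN filtration of $V$ on a regular plane trinomial curve, via the syzygy-gap methods of Han--Monsky and Monsky developed in [T3]. Once that lemma is granted, as the statement of the theorem explicitly does, the present result is obtained by the formal assembly above.
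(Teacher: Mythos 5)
Your proposal is correct and follows essentially the same route as the paper: the paper's own proof is literally the single sentence citing [T3, Lemma~5.4], and the surrounding text preceding Theorem~\ref{?} performs exactly the reduction you spell out (Proposition~\ref{p3} gives $c^{I(n)}({\bf m}) = \alpha(R,I(n))$, Theorem~\ref{n1}(3) gives $\alpha(R,I(n)) = 1 - a_{min}(V)/d$, and the closed-form value of $a_{min}(V)$ is imported from [T3]). Your extra remarks on the $D_n(l)=\infty$ case and on the divisibility $\lambda(1-T_n(l)) \in a\cdot\{0,\ldots,\lambda_h-1\}$ are in fact slightly more careful than the paper's own statement of the bounds.
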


\begin{rmk} Theorem~\ref{r*} 
implies that for 
$p\geq d^2$ with $p\equiv\pm 1\pmod{2\lambda_h}$ 
(hence infinitly many primes $p$), we have 
$$\alpha(R, {\bf m}) = \alpha^{\infty}(R, {\bf m})
 = c^{\bf m}({\bf m}) = c^{\bf m}_{\infty}({\bf m}) = {3}/{2} .$$

Moreover, for any given explicit trinomial $h$ and an integer 
$n\geq 1$, Theorem~\ref{r*} gives an 
effective alogorithm to compute $\alpha(R, I(n))$ 
and $c^{I(n)}({\bf m})$, as we need to check if 
 the taxicab distance
 $\mbox{Td}(l^stn, u)< 1$ has a solution for some $u\in L_{odd}$, 
where  $0\leq s <\phi(2\lambda)$ (that means for finitely many $s$).

\end{rmk}

\begin{cor}\label{c4}Let $S_1, \ldots, S_r$ be a set of irreducible plane 
trinomial curves 
of degrees $\geq 4$ defined over a field of characteristic $0$. Then 
for infinitely many primes $p=\mbox{characteristic}~k(s)$, where $s$ denotes a 
closed point of $\mbox{Spec}~(A)$ we have  
$$c^{({\bf m}_{1}\#\cdots\#{\bf m}_{r})_{s}}(({\bf m}_{1}\#\cdots\#{\bf m}_{r})_{s}) 
= c_\infty^{({\bf m}_{1}\#\cdots\#{\bf m}_{r})}
({\bf m}_{1}\#\cdots\#{\bf m}_{r}).$$ 
If one of the trinomial is symmetric ({\it i.e.}, $h = 
x^ay^{d-a}+ y^az^{d-a} + z^ax^{d-a}$, where $0\leq a\leq d$)
of degree $d\neq 5$ then there 
are also infinitely many primes, for which 
$$c^{({\bf m}_{1}\#\cdots\#{\bf m}_{r})_{s}}(({\bf m}_{1}\#\cdots\#{\bf m}_{r})_{s}) 
> c_\infty^{({\bf m}_{1}\#\cdots\#{\bf m}_{r})}
({\bf m}_{1}\#\cdots\#{\bf m}_{r}).$$ 
\end{cor}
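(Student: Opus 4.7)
The plan is to reduce the Segre-product statement to the case of individual plane trinomial curves using the Segre formula of Lemma~\ref{l5}: $c^{I_1\#\cdots\#I_r}({\bf m}_1\#\cdots\#{\bf m}_r) = \max_i c^{I_i}({\bf m}_i)$, whose characteristic-zero analogue $c_\infty^\# = \max_i c_\infty^{{\bf m}_i}$ follows by passing $p_s\to\infty$ componentwise via Theorem~\ref{vb1}. For each $S_i$ of degree $d_i\geq 4$ I separate two subcases. If $S_i$ is irregular of multiplicity $r_i\geq d_i/2$, the closed-form formula (the theorem immediately preceding Notations~\ref{nt}) gives $c^{{\bf m}_{i,s}}({\bf m}_{i,s}) = \tfrac{3}{2} + \bigl(\tfrac{2r_i - d_i}{2d_i}\bigr)^2$ for every $p_s\geq d_i^2$, and this value coincides with $c_\infty^{{\bf m}_i}({\bf m}_i)$. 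If $S_i$ is regular, the syzygy bundle is semistable in characteristic zero (by [PR], as cited in Corollary~\ref{pc1}), so $c_\infty^{{\bf m}_i}({\bf m}_i) = 1 - \mu(V_i)/d_i = 3/2$, and Theorem~\ref{?}(1) yields $c^{{\bf m}_{i,s}}({\bf m}_{i,s}) = 3/2$ precisely when $p_s\equiv\pm 1\pmod{2\lambda_{h_i}}$ and $p_s\geq d_i^2$.

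Part~(1) is then immediate: letting $L = 2\,\mathrm{lcm}\{\lambda_{h_i} : S_i \text{ regular}\}$ and applying Dirichlet's theorem, one obtains infinitely many primes $p_s\equiv 1\pmod L$ with $p_s > \max_i d_i^2$. For each such $p_s$, every $c^{{\bf m}_{i,s}}({\bf m}_{i,s})$ equals $c_\infty^{{\bf m}_i}({\bf m}_i)$, and taking the maximum over $i$ gives the required Segre-product equality.

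For part~(2), assume $S_1$ is symmetric of degree $d_1\neq 5$. The symmetry $\alpha=\beta=\nu$ of Notations~\ref{nt} collapses the minimization of $\mathrm{Td}(l^s t_h, u)$ to a one-parameter problem, and Lemma~5.4 of [T3] produces a residue $l_0\in(\Z/2\lambda_{h_1}\Z)^*$ with $D_1(l_0) = 0$ and $T_1(l_0) < 1$; it is at exactly this step that the hypothesis $d_1\neq 5$ enters. By the Chinese Remainder Theorem combined with Dirichlet, I then extract infinitely many primes $p_s$ lying simultaneously in the class $\pm l_0\pmod{2\lambda_{h_1}}$ and in the class $\pm 1\pmod{2\lambda_{h_j}}$ for every other regular $S_j$, with $p_s \geq \max_j d_j^2$. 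For such $p_s$, Theorem~\ref{?}(2) produces the $p_s$-independent strict inequality $c^{{\bf m}_{1,s}}({\bf m}_{1,s}) = \tfrac{3}{2} + \tfrac{\lambda_{h_1}(1 - T_1(l_0))}{2d_1} > \tfrac{3}{2}$, while each other regular $S_j$ contributes $c^{{\bf m}_{j,s}} = c_\infty^{{\bf m}_j} = 3/2$ and each irregular $S_j$ is already at its fixed $c_\infty^{{\bf m}_j}$ value. Choosing $l_0$ (inside the infinite set of valid classes furnished by $d_1\neq 5$) so that the $S_1$-excess dominates each irregular contribution yields $\max_i c^{{\bf m}_{i,s}} > \max_i c_\infty^{{\bf m}_i}$, i.e.\ $c^\# > c_\infty^\#$, for infinitely many primes.

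The main obstacle is the bookkeeping in part~(2): one must verify that the residue class $\pm l_0\pmod{2\lambda_{h_1}}$ is simultaneously CRT-compatible with $\pm 1\pmod{2\lambda_{h_j}}$ for $j\geq 2$, and that the positive excess $\lambda_{h_1}(1 - T_1(l_0))/(2d_1)$ from $S_1$ exceeds each bounded irregular contribution $((2r_j - d_j)/(2d_j))^2$ from the other factors. Both points rest on the number-theoretic analysis in Lemma~5.4 of [T3], which exploits the $d_1\neq 5$ hypothesis to produce enough residue classes $l_0$ with the required taxicab property to meet all the compatibility and dominance constraints; the rest of the argument is a routine application of Dirichlet.
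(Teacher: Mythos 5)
The paper states Corollary~\ref{c4} without proof, so the comparison can only be made against the machinery developed in Section~7. Your part~(1) is correct and is the natural argument: Lemma~\ref{l5}, Proposition~\ref{p3}, the constant formula for irregular trinomials, Theorem~\ref{?}(1) for regular ones, and Dirichlet on $p\equiv 1\pmod{L}$ with $L = 2\,\mathrm{lcm}\{\lambda_{h_i}\}$.

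Part~(2) contains genuine gaps. First, the central claim that Lemma~5.4 of [T3] yields $l_0$ with $D_1(l_0)=0$ is false for symmetric trinomials. Note $D_1(l)=0$ is $l$-independent (it asks whether $\mathrm{Td}(t_h,u)<1$ for some $u\in L_{odd}$ with $l^0=1$); for the Fermat curve $x^d+y^d+z^d$ one has $t_h=(1/d,1/d,1/d)$, and the minimal taxicab distance to $L_{odd}$ is $1+1/d>1$, so $D_1\geq 1$. This is corroborated by Examples~\ref{ex1} and~\ref{ex2}, where $p$ always appears in the denominator with positive exponent. Second, the set of classes $l_0\in(\Z/2\lambda_{h_1}\Z)^*/\{\pm 1\}$ is \emph{finite}, not ``infinite'' as you assert, so there is no freedom to tune $l_0$ against external constraints. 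Third, because $D_1(l_0)\geq 1$, the excess $\lambda(1-T_1(l_0))/(2p_s^{D_1(l_0)}d_1)$ decays to zero as $p_s\to\infty$; for a fixed $l_0$ there are at most finitely many $p_s$ in that residue class for which it can exceed a fixed positive irregular contribution $((2r_j-d_j)/(2d_j))^2$. Thus the ``dominance'' step does not just lack justification---it cannot hold for infinitely many primes in the mixed regular/irregular situation. The proposal does not close the gap you yourself identify; punting to Lemma~5.4 of [T3] does not help, as that lemma governs only the residue classes for $S_1$, not the sizes of the other factors' invariants.

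Two further remarks. The CRT side condition $p_s\equiv\pm 1\pmod{2\lambda_{h_j}}$ for the other regular $S_j$ is unnecessary (it is also unverified, since the moduli $2\lambda_{h_1},2\lambda_{h_j}$ may share factors and $l_0$ need not be $\pm1$ on the gcd): making $c^{\mathbf{m}_{j,s}}$ larger only increases the Segre maximum and thus helps, not hurts. And the numerator in your formula should be $\lambda$, not $\lambda_h=\lambda/a$; cf.\ Theorem~\ref{?}(2). If one restricts to the case where all factors are regular (so $c_\infty^{\#}=3/2$), the argument collapses to the correct and much simpler one: by Theorem~\ref{?} and the $d_1\neq 5$ hypothesis there is $l_0$ with $T_1(l_0)<1$, Dirichlet gives infinitely many $p_s\equiv\pm l_0\pmod{2\lambda_{h_1}}$, and for these $c^{\#}_s\geq c^{\mathbf{m}_{1,s}}>3/2=c_\infty^{\#}$, with no constraint at all on the other factors. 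The scope of that argument---precisely which mixtures of regular and irregular factors it covers---is exactly what your proposal does not settle.
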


\begin{ex}\label{ex1}$$\mbox{Let}\quad 
R = \frac{k_p[x,y,z]}{(x^d+y^d+z^d)}, \quad{\bf m} = (x,y,z)$$
where $k_p$ denotes a field  of characteristic $p\geq d^2$.
Then by Corollary~4.4 and Theorem~4.5 of [T3] 
\begin{enumerate}
\item if $d > 5$ is an odd integer then  $p \equiv \pm (d+2)\pmod{2d}$
implies $$\alpha(R, {\bf m}) = c^{\bf m}({\bf m}) =  \frac{3}{2} +\frac{d-6}{2dp}.$$
 \item If $d\geq 4$ is an even integer  
 then 
\begin{enumerate} \item $p\equiv \pm(d+1)\pmod{2d}$
implies $$\alpha(R, {\bf m}) = c^{\bf m}({\bf m}) =  \frac{3}{2} +\frac{d-3}{2dp}.$$
 \item and $p\equiv \pm 1\pmod{{2d}}$
implies $$\alpha(R, {\bf m}) = c^{\bf m}({\bf m}) =  \frac{3}{2}.$$
 \end{enumerate}
\end{enumerate}
\end{ex}

\begin{ex}\label{ex2}Let 
$$R = \frac{k_p[x,y,z]}{(x^{d-1}y+y^{d-1}z+z^{d-1}x)},\quad\mbox{and}\quad{\bf m} = (x,y,z)$$
where $k_p$ denotes a field  of characteristic $p\geq d^2$.
By Corollary~4.4 and Theorem~4.5 of [T3], where 
$\lambda = (d^2-3d+3)$
\begin{enumerate}
\item if $p\equiv \pm 1\pmod{\lambda}$
then $$\alpha(R, {\bf m}) = c^{\bf m}({\bf m}) =  \frac{3}{2}.$$
\item If $p \equiv \lambda \pm 2\pmod{2\lambda}$
and
\begin{enumerate}\item
if $d \geq 6$ is an even integer then 
\begin{enumerate}
\item 
for $3\cdot 2^{m-2}\leq d-1 < 2^m$, where  $m\geq 1$, we have 
 $$\alpha(R, {\bf m}) = c^{\bf m}({\bf m}) =  \frac{3}{2} +
\frac{2(d-2)(d-1-3\cdot 2^{m-2})+2}{dp^m},$$
 \item for $2^{m}\leq d-1 < 
3\cdot 2^{m-1}$, where $m\geq 1$, we have
 $$\alpha(R, {\bf m}) = c^{\bf m}({\bf m}) =  \frac{3}{2} +
\frac{(d-2)(3\cdot 2^{m-1}-(d-1))-1}{dp^m}.$$
\end{enumerate}
\end{enumerate}
\begin{enumerate}\item If $d\geq 7$ is an odd integer then  
$$\alpha(R, {\bf m}) = c^{\bf m}({\bf m}) =  \frac{3}{2} +
\frac{\lambda -6(d-2)}{2dp}.$$
\item If $d=5$ then 
$$\alpha(R, {\bf m}) = c^{\bf m}({\bf m}) =  \frac{3}{2} +\frac{7}{2dp^3}.$$
\end{enumerate}
\end{enumerate}
\end{ex}

\end{document}